\def\@cite#1#2{{\normalfont[{#1\if@tempswa , #2\fi}]}}
\definecolor{tocolor}{rgb}{.1,.1,.1}
\definecolor{urlcolor}{rgb}{.2,.2,.6}
\definecolor{linkcolor}{rgb}{.1,.1,.5}
\definecolor{citecolor}{rgb}{.4,.2,.1}
\newcommandx{\thdef}[2]{
	\newaliascnt{#1}{theorem}  
	\newtheorem{#1}[#1]{#2}
	\aliascntresetthe{#1}  
	\newtheorem*{#1*}{#2}
	\expandafter\newcommand\expandafter{\csname #1autorefname\endcsname}{#2}
}
\newtheorem*{rep@theorem}{\rep@title}
\newcommand{\newreptheorem}[2]{%
\newenvironment{rep#1}[1]{%
 \def\rep@title{#2 \ref{##1}}%
 \begin{rep@theorem}}%
 {\end{rep@theorem}}}
\newtheorem{theorem}{Theorem}[section]
\theoremstyle{definition}
\theoremstyle{remark}
\theoremstyle{remark}
\newtheoremstyle{parag}
  {\topsep}   
  {\topsep}   
  {}  
  {}       
  {\bfseries} 
  {.}         
  { } 
  {}          
\theoremstyle{parag}
\newcommand{\spc}[1]{\mathsf{#1}} 
\newcommand{\shf}[1]{\mathcal{#1}} 
\newcommand{\RR}{\mathbb{R}}
\newcommand{\CC}{\mathbb{C}}
\newcommand{\rbrac}[1]{\left(#1\right)} 
\newcommandx{\fn}[2][2=]{#1\ifthenelse{\equal{#2}{}}{}{\!\rbrac{{#2}}}} 
\newcommandx{\id}[2][2=]{\fn{{\rm id}_{#1}}[#2]} 
\newcommand{\ext}[2][\bullet]{\spc{\Lambda}^{#1}{#2}} 
\newcommandx{\End}[2][1=]{\fn{\spc{End}_{#1}}[#2]} 
\newcommandx{\Hom}[2][1=]{\fn{\spc{Hom}_{#1}}[#2]} 
\newcommandx{\Aut}[2][1=]{\fn{\spc{Aut}_{#1}}[#2]} 
\newcommandx{\image}[1]{\fn{\spc{img}}[#1]} 
\renewcommandx{\ker}[1]{\fn{\spc{ker}}[#1]} 
\newcommandx{\rank}[1]{\fn{\mathrm{rank}}[#1]} 
\newcommandx{\ann}[1]{\fn{\spc{ann}}[#1]} 
\newcommandx{\hlgy}[3][1=\bullet,3=]{\spc{H}_{#1}^{#3}\!\rbrac{{#2}}} 
\newcommandx{\cohlgy}[3][1=\bullet,3=]{\spc{H}^{#1}_{#3}\!\rbrac{{#2}}} 
\newcommandx{\chow}[3][1=\bullet,3=]{\spc{A}^{#1}_{#3}\!\rbrac{{#2}}} 
\newcommandx{\Ext}[3][1=\bullet,3=]{\fn{\spc{Ext}^{#1}_{#3}}[{#2}]} 
\newcommandx{\Tor}[3][1=\bullet,3=]{\fn{\spc{Tor}^{#1}_{#3}}[{#2}]} 
\newcommandx{\Pic}[1]{\fn{\spc{Pic}}[{#1}]} 
\newcommandx{\chernalg}[2][1=\bullet]{\fn{\spc{Chern}^{#1}}[{#2}]} 
\newcommandx{\chern}[2][1=]{\fn{c_{#1}}[#2]} 
\newcommandx{\ch}[2][1=]{\fn{\mathrm{ch}_{#1}}[{#2}]} 
\newcommandx{\sKer}[2][1=]{ \fn{ \shf{K}er_{#1}}[{#2}] } 
\newcommandx{\sHom}[2][1=]{ \fn{ \shf{H}om_{#1}}[{#2}] } 
\newcommandx{\sEnd}[2][1=]{ \fn{ \shf{E}nd_{#1}}[{#2}] } 
\newcommandx{\sExt}[3][1=\bullet,3=]{\fn{\shf{E}xt^{#1}_{#3}}[{#2}]} 
\newcommandx{\sTor}[3][1=\bullet,3=]{\fn{\shf{T}or^{#1}_{#3}}[{#2}]} 
\newcommandx{\forms}[2][1=\bullet]{\Omega^{#1}_{#2}} 
\newcommandx{\can}[1][1=]{\omega_{#1}} 
\newcommandx{\acan}[1][1=]{\omega_{#1}^{-1}} 
\newcommandx{\tshf}[1]{\shf{T}_{#1}} 
\newcommandx{\mvect}[2][1=\bullet]{ \ext[#1]{\tshf{#2}} }
\newcommandx{\der}[2][1=\bullet]{\mathscr{X}^{#1}_{#2}} 
\newcommandx{\sJet}[3][1=,2=]{\shf{J}^{#1}_{#2}#3} 
\newcommandx{\tb}[2][1=]{\spc{T}_{\!#1}{#2}} 
\newcommandx{\ctb}[2][1=]{\spc{T}_{\!#1}^*{#2}} 
\newcommandx{\lie}[2][2=]{\fn{\mathscr{L}_{#1}}[#2]} 
\newcommandx{\hook}[2][2=]{\fn{i_{#1}}[#2]} 
\newcommand{\thickbar}{\mathpalette\@thickbar}
\newcommand{\@thickbar}[2]{{#1\mkern1.5mu\vbox{
  \sbox\z@{$#1\mkern-1mu#2\mkern-1mu$}%
  \sbox\tw@{$#1\overline{#2}$}%
  \dimen@=\dimexpr\ht\tw@-\ht\z@-.6\p@\relax
  \hrule\@height.4\p@ 
  \vskip1\p@
  \hrule\@height.4\p@ 
  \vskip\dimen@
  \box\z@}\mkern1.5mu}
}
\newcommand{\OO}{\mathcal{O}}
\newcommand{\JJ}{\mathbb J}
\newcommand{\BB}{\mathcal{B}}
\newcommand{\Ll}{\mathcal{L}}
\newcommand{\delbar}{\overline\partial}
\newcommand{\TT}{\mathbb{T}}
\newcommand{\ol}{\overline}
\renewcommand{\Re}{\mathrm{Re}}
\renewcommand{\Im}{\mathrm{Im}}
    \def\_email#1@#2\q_nil{%
      \href{mailto:#1@#2}{{\emailfont #1@#2}}
    }
    \newcommand\emailfont{\sffamily}
    \newcommand\emailampersat{{\color{red}\small@}}
\title{\Large Brane quantization of toric Poisson varieties}
\author{ Francis Bischoff\footnote{University of Oxford;  francis.bischoff@maths.ox.ac.uk}\ \ \and\ \ Marco Gualtieri\footnote{University of Toronto;  mgualt@math.toronto.edu}}
\date{}
\begin{document}
\maketitle
\vspace{-2em}
\abstract{
In this paper we propose a noncommutative generalization of the relationship between compact K\"ahler manifolds and complex projective algebraic varieties. Beginning with a prequantized K\"ahler structure, we use a holomorphic Poisson tensor to deform the underlying complex structure into a generalized complex structure, such that the prequantum line bundle and its tensor powers deform to a sequence of generalized complex branes.  Taking homomorphisms between the resulting branes, we obtain a noncommutative deformation of the homogeneous coordinate ring.  As a proof of concept, this is implemented for all compact toric K\"ahler manifolds equipped with an R-matrix holomorphic Poisson structure, resulting in what could be called noncommutative toric varieties.
 
To define the homomorphisms between generalized complex branes, we propose a  method which involves lifting each pair of generalized complex branes to a single coisotropic A-brane in the real symplectic groupoid of the underlying Poisson structure, and compute morphisms in the A-model between the Lagrangian identity bisection and the lifted coisotropic brane.  This is done with the use of a multiplicative holomorphic Lagrangian polarization of the groupoid.    
}
\tableofcontents

\section{Introduction}

   A symplectic form $\omega$ on a manifold $M$, if it has integral periods, may be prequantized to a Hermitian line bundle $L$ with unitary connection $\nabla$ whose curvature is $-2\pi i\omega$.  A compatible complex structure $I$ defines a K\"ahler structure on $M$ and, from the point of view of geometric quantization, serves as a \emph{complex polarization}, endowing $L$ with a holomorphic structure $\delbar_L = \nabla^{0,1}$ and defining a sheaf of \emph{polarized sections}: the holomorphic sections of $L$.  The geometric quantization of the symplectic manifold $(M,\omega)$ is then taken to be the space of global sections $H^0(M,L)$ of this sheaf.  The extent to which the quantization is independent of the polarization has been much studied, e.g.~\cite{Hitchin:1990gq,Axelrod:1989xt,MR2928087,MR1764435}; while the physical expectation of independence has been shown in important cases, a complete understanding of this phenomenon remains elusive. 
On the other hand, if we consider the quantizations of all multiples of our symplectic form $\omega, 2\omega, 3\omega,\ldots$, it is clear that this family of quantizations fully encodes the information of the complex polarization.  Indeed, by the Kodaira embedding theorem, the graded commutative \emph{homogeneous coordinate ring}
\begin{equation*}
A = \bigoplus_{n\geq 0} H^0(M,L^{\otimes n})
\end{equation*}
expresses the complex manifold $(M,I)$ as an algebraic subvariety of complex projective space. 
For the purpose of our generalization, it will be useful to think of this algebra as resulting from a sequence of objects $\mathcal{O}, L, L^{\otimes 2}, \ldots$ in the B-model category of holomorphic sheaves on the complex  manifold, obtained from the structure sheaf $\mathcal{O}$ by the successive application of the endofunctor $\Phi:\mathcal{E}\mapsto L\otimes\mathcal{E}$:
\begin{equation}\label{homalg}
A = \bigoplus_{n\geq 0} \Hom{\mathcal{O},\Phi^n(\mathcal{O})},
\end{equation}
with product $a\cdot b = \Phi^{|b|}(a)\circ b$.

We now observe that the above complex structure, B-brane $\mathcal{\OO}$, and endofunctor $\Phi$ all have nontrivial deformations in \emph{generalized complex geometry}~\cite{MR2013140,MR2811595}, as follows.  If $\sigma$ is a holomorphic Poisson tensor, we may construct a holomorphic family of generalized complex structures (Courant integrable complex structures on $TM\oplus T^*M$) parametrized by $\hbar\in\CC$,
\begin{equation}\label{defj}
\JJ_\hbar = \begin{pmatrix}-I&Q_\hbar \\0&I^*\end{pmatrix},
\end{equation}
where $Q_\hbar =-4\Im(\hbar\sigma)$. This generalized complex structure limits to the original complex structure as $\hbar\to 0$. The trivial line bundle continues to define a brane for all $\JJ_\hbar$, the \emph{trivial space-filling brane}.  Finally, if the holomorphic line bundle $L$ is a \emph{Poisson module}, meaning that $\sigma$ is lifted to a $\CC^*$-invariant Poisson structure on $L$, then, as shown in~\cite{gualtieri2010branes}, the operation of tensoring by $(L,\nabla)$ canonically deforms to a functor of the form
\begin{equation*}
\Phi_\hbar(\mathcal{B}) = (L,\nabla_\hbar)\otimes \varphi_{\hbar}^*(\mathcal{B}),
\end{equation*}
which is the composition of a diffeomorphism $\varphi_\hbar$ and a B-field gauge transformation (i.e. tensoring by the Hermitian line bundle $L$ equipped with a unitary connection $\nabla_\hbar$)
acting on the category of generalized complex branes for $\JJ_\hbar$. 

In principle, the deformation of $(I, \mathcal{O}, \Phi)$ into $(\JJ_{\hbar}, \mathcal{O}, \Phi_\hbar)$ described above should suffice for the construction of a deformed graded algebra, by using precisely the same technique as Equation~\ref{homalg}.  This would produce a version of the \emph{twisted homogeneous coordinate ring} often seen in noncommutative algebraic geometry~\cite{ARTIN1990249}.  The problem which stops us from reaching this final conclusion is that while the objects of the ``category'' of $\JJ_\hbar$-branes are well-defined geometrically~\cite{MR2811595}, the morphism spaces have not yet been defined in sufficient generality.  Thus we need to develop a definition of the morphism spaces $\Hom{\OO,\Phi^n_\hbar(\OO)}$ and of their compositions.

To define the vector space of morphisms between generalized complex branes, we make use of a fundamental tool in the study of Poisson geometry: the~\emph{Weinstein symplectic groupoid} $\mathcal{G}$ of the real Poisson structure $Q$ underlying the generalized complex structure $\JJ$~\cite{weinstein1987symplectic,MR1081010,MR1081011}.  
This groupoid, a symplectic manifold of twice the dimension of $M$, has motivated several recent advances in generalized complex and K\"ahler geometry~\cite{crainic2011,bailey2016integration,bischoff2018morita}, and we make use of it for the key purpose of converting a pair of generalized complex branes $(\BB_1,\BB_0)$ into a coisotropic A-brane $\BB_{1,0}$ in the symplectic groupoid $\mathcal{G}$.  The groupoid is, roughly speaking, the space of $Q$-Hamiltonian paths in $M$ modulo an appropriate notion of homotopy~\cite{cattaneo2001poisson, MR2128714}, and our A-brane $\BB_{1,0}$ is a reduction of the space of such paths going from $\BB_0$ to $\BB_1$.
The groupoid is also equipped with a canonical Lagrangian submanifold $\Ll$, the space of identity arrows representing constant paths at the points of $M$, and so we propose the following natural definition for the morphism space:
\begin{equation}\label{homsp}
\Hom[\JJ]{\BB_0,\BB_1} \coloneqq\Hom[\mathcal{G}]{\Ll,\BB_{1,0}}.
\end{equation}  
In short, we reduce the definition of morphisms between generalized complex branes in $M$ to that of morphisms between Lagrangian and space-filling coisotropic A-branes in an auxiliary real symplectic manifold $\mathcal{G}$.

The above proposal fits perfectly into the quantization paradigm developed by Gukov and Witten~\cite{Gukov:2008ve}. In their paper, a symplectic manifold is quantized by embedding it as a real Lagrangian submanifold of a symplectic complexification; for us, it is embedded as the identity bisection of the Weinstein symplectic groupoid. 

At present, coisotropic A-branes are not nearly as well-understood as Lagrangian submanifolds from a categorical point of view. In this paper, as for the other papers in which homomorphisms involving coisotropic branes are computed~\cite{Aldi:2005hz,Kapustin:2006pk, Gukov:2008ve}, we provide a method which may not be generally applicable, but which suffices for the problem at hand. Our proposed method for computing the Hom space~\eqref{homsp} involves choosing a \emph{multiplicative holomorphic Lagrangian polarization} of the space-filling brane $\BB_{1,0}$ and finding holomorphic Bohr-Sommerfeld leaves which intersect the Lagrangian brane. Here, we take inspiration from the substantial literature on $C^*$-algebra quantizations of real Poisson manifolds via symplectic groupoids, especially the papers \cite{MR1156554,MR2417440,MR3238532}.    

We implement this sequence of ideas for a general compact toric variety equipped with an invariant holomorphic Poisson structure induced by an R-matrix, and verify that the resulting algebra is a noncommutative deformation of the homogeneous coordinate ring of the underlying projective variety.  We take advantage of three special features which simplify the analysis: First, the symplectic groupoids of these Poisson structures can be explicitly constructed using the method of Xu~\cite{xu1993poisson}; second, the toric symmetry gives rise to a completely integrable system on the groupoid compatible with the multiplication, which provides a convenient holomorphic Lagrangian polarization of the space-filling branes in question.  Finally, the groupoid is, in this case, symplectomorphic to the real cotangent bundle of the toric variety, and hence is an exact symplectic manifold.  We are optimistic that none of these advantages are necessary for the future application of these ideas to more general situations.

\vspace{1ex}
\noindent{\bf Acknowledgements:}  We thank Denis Auroux, Joshua Lackman, James Pascaleff, David Mart\'{i}nez Torres, Michel Van den Bergh, Alan Weinstein, and Edward Witten for helpful discussions about this project over several years. F.B. is supported by an NSERC Postdoctoral Fellowship and M.G. is supported by an NSERC Discovery grant.



\section{A-Brane quantization of K\"ahler manifolds}
\subsection{K\"ahler metrics as LS submanifolds in twisted cotangent bundles} \label{metricbrane}

In this section, we review Donaldson's reformulation \cite{donaldson2002holomorphic} of a K\"{a}hler metric as an LS submanifold inside of a holomorphic symplectic affine bundle. Later, we will explain how this picture can be extended to the setting where the complex structure has been deformed into a generalized complex structure, following~\cite{bischoff2018morita}.

Let $(M, I, g)$ be a K\"{a}hler manifold, and let $\omega = gI$ be the associated K\"{a}hler form. The first step is to convert the data of the complex structure $I$ and the K\"{a}hler cohomology class $[\omega] \in H^{1,1}(M, \mathbb{R})$ into a holomorphic symplectic manifold $(Z, \Omega)$ deforming the holomorphic cotangent bundle. Here we consider the cotangent bundle $T^*M$ to be equipped with the holomorphic symplectic form $\Omega_{0}$ whose imaginary part is the canonical symplectic form on the real cotangent bundle. 

To begin, we build $(Z, \Omega)$ via a \v{C}ech construction. The K\"{a}hler form $\omega$ is a closed $2$-form of type $(1,1)$, and as a result, it is locally given by a K\"{a}hler potential $K$ as $\omega =  i \partial \overline{\partial} K$. Choose an open cover $M = \cup U_{i}$ such that over each open set there is a K\"{a}hler potential $K_{i} \in C^{\infty}(U_{i}, \mathbb{R})$. Then on the double overlap regions $U_{i} \cap U_{j}$, the $1$-forms $\mu_{ij} = \partial (K_{i} - K_{j})$ are closed and holomorphic. We now construct $Z$ by gluing $T^{*}U_{i}$ to $T^{*}U_{j}$ via the additive transition function $\mu_{ij}$. The resulting space $Z$ is no longer a vector bundle, but it retains an additive action of $T^*M$. In other words, $Z$ is an affine bundle for $T^*M$. Furthermore, $Z$ inherits a holomorphic symplectic structure $\Omega$ since the forms $\mu_{ij}$ are closed. There is a compatibility between this symplectic structure and the additive action of the cotangent bundle. To state this, note that the additive action defines a map 
\[
A : T^*M \times_{M} Z \to Z,
\]
and we may consider the graph of this map inside $(T^*M, -\Omega_{0}) \times (Z, -\Omega) \times (Z, \Omega)$. The compatibility condition states that the graph is a holomorphic Lagrangian submanifold. We summarize these properties by saying that $(Z, \Omega)$ is a holomorphic symplectic principal $T^*M$ bundle. 

Note that $(Z, \Omega)$ is classified by the \v{C}ech cohomology class $[\mu_{ij}] \in H^{1}(\Omega_{M}^{1, cl})$, where $\Omega^{1, cl}_{M}$ is the sheaf of closed holomorphic $1$-forms on $M$, which define local symmetries of the cotangent bundle. In fact, $\mu_{ij}$ is simply a \v{C}ech representative of the K\"{a}hler class $[\omega]$. From this perspective, we see that the symplectic principal bundle encodes the underlying holomorphic moduli of the K\"{a}hler structure, and deformations of $(Z, \Omega)$ correspond to deformations of both the complex structure and the K\"{a}hler class. 

The second step is to convert the data of the K\"{a}hler metric $g$ into a submanifold $\mathcal{L}$ of the space $(Z, \Omega)$. Again, we give a \v{C}ech description, making use of the local K\"{a}hler potentials $K_{i}$ that exist over the open sets of the cover. Over each open set $U_{i}$, define the local section $\mathcal{L}_{i} = - \partial K_{i}$ of $T^{*}U_{i}$. These sections satisfy the identity $\mathcal{L}_{i} + \mu_{ij} = \mathcal{L}_{j}$ on $U_{i} \cap U_{j}$, and hence they glue together to define a global section $\mathcal{L}$ of the projection map $\pi : Z \to M$. Throughout, we will abuse notation by identifying $\mathcal{L}$ with its image in $Z$. The section $\mathcal{L}$ gives a global meaning for the K\"{a}hler potentials. If we now pullback the holomorphic symplectic form $\Omega$ by the section, then we get 
\[
\mathcal{L}^{*} \Omega = \mathcal{L}_{i}^{*} \Omega_{0} =  i \partial \overline{\partial} K_{i} = \omega,
\]
where we use the local identification of $(Z, \Omega, \mathcal{L}) \cong (T^{*}U_{i}, \Omega_{0}, \mathcal{L}_{i})$ in the second equality, and the third equality follows from the defining property of the canonical symplectic form. This calculation shows that $\mathcal{L}$ is an LS submanifold of $Z$, meaning that it is Lagrangian with respect to the imaginary part of $\Omega$, and symplectic with respect to the real part of $\Omega$. It also shows that $\mathcal{L}$ allows us to recover the K\"{a}hler form $\omega$, and hence the metric $g$, from the principal bundle $(Z, \Omega)$. We therefore conclude that $\mathcal{L}$ encodes the K\"{a}hler metric. Under this equivalence, deforming $\mathcal{L}$ as an LS submanifold of $Z$ corresponds to varying $\omega$ within a fixed K\"{a}hler class. The degrees of freedom involved in describing a K\"{a}hler metric against the fixed background of a complex structure and K\"{a}hler class consists of the moduli of a single real-valued function on $M$. From the perspective of viewing the metric as an LS submanifold, this is due to the fact that Lagrangians deform via real generating functions. 

It is possible to give an alternative description of $(Z, \Omega, \mathcal{L})$. Namely, by using the additive action of the cotangent bundle on $\mathcal{L}$, we obtain the following isomorphism 
\[
\phi: T^{*}M \to Z, \qquad \alpha \mapsto \alpha + \mathcal{L}(\pi(\alpha)).
\]
This map sends the zero section to $\mathcal{L}$ and satisfies $\phi^{*}(\Omega) = \Omega_{0} + \pi^{*}\omega$. The map $\phi$ is not holomorphic. As a result the form $\phi^{*}(\Omega)$ is not holomorphic with respect to the given complex structure on the cotangent bundle. However, $\phi^{*}(\Omega)$ defines a new complex structure for which it is a holomorphic $(2,0)$-form. We may write the form as follows: 
\[
\Omega_{0} + \pi^{*}\omega = (\Re(\Omega_{0}) + \pi^{*}\omega) + i \Im(\Omega_{0}) = \Im(\Omega_{0}) I_{1} + i \Im(\Omega_{0}), 
\] 
where $I_{1} = \Im(\Omega_{0})^{-1} (\Re(\Omega_{0}) + \pi^{*}\omega)$ is a new complex structure on $T^{*}M$. Hence, we see that the symplectic principal bundle is simply given by a magnetic deformation of the cotangent bundle:
\begin{equation*}
(Z, \Omega) \cong (T^{*}M, \Omega_{0} + \pi^{*}\omega),
\end{equation*}
with the LS submanifold $\mathcal{L}$ given by the zero section $0_M$, and where the complex structure has been deformed in the manner described above. We can rescale the K\"{a}hler form by a positive real constant to get a family $(Z_{t}, \Omega_{t}, \mathcal{L}_{t})$ encoding the K\"{a}hler structures $t \omega$, for $t \in \mathbb{R}_{> 0}$. These are also given by magnetic deformations of the cotangent bundle, as follows:
\begin{equation} \label{magdef}
(Z_{t}, \Omega_{t}, \mathcal{L}_{t}) \cong (T^{*}M, \Omega_{0} + t \pi^{*}\omega, 0_M).
\end{equation}

The symplectic principal bundle $(Z, \Omega)$ described above is an example of a holomorphic symplectic Morita equivalence, a notion which is due to Weinstein and Xu~\cite{xu1991morita,MR2166451}. Morita equivalences arise in the study of Poisson manifolds and should, upon quantization, correspond to invertible bimodules over the quantized algebra of functions. In the present section, we have been dealing with the zero Poisson structure on $M$, and the Morita equivalences may be described as symplectic principal bundles. In Section~\ref{ncsection}, we will describe the generalization of Donaldson's reformulation of K\"{a}hler geometry to the case of non-trivial holomorphic Poisson manifolds--the theory of Morita equivalence then becomes essential. 

\subsection{K\"ahler forms from A-Brane pairs} \label{Braneprequant}

In this paper, we take the perspective of viewing the twisted cotangent bundle $Z$ and its LS submanifold $\mathcal{L}$ as a pair of branes in the extended A-model of the real cotangent bundle of $M$. This point of view is an instance of the general philosophy on geometric quantization developed by Gukov and Witten \cite{Gukov:2008ve}.  The Lagrangian submanifold $\mathcal{L}$ is called a \emph{brane} when it is equipped with a flat unitary bundle --- in our case we simply take it to be the trivial bundle.  More interesting is the interpretation of $Z$ as a space-filling brane in $T^*M$.  
 Following the work of Kapustin-Orlov on Coisotropic A-branes~\cite{Kapustin:2001ij}, the holomorphic symplectic structure on $Z$ may be viewed as a space-filling brane in the following sense.

\begin{definition}
A (rank 1) \emph{space-filling A-brane} on the real symplectic manifold $(X,\varpi)$ is a Hermitian line bundle $U$ equipped with a unitary connection $\nabla$ whose curvature $-2\pi i F$ satisfies the condition $(\varpi^{-1} F)^2 = -1$, or equivalently that the complex 2-form $\Omega_F = F + i\varpi$ defines a holomorphic symplectic structure on $X$.\footnote{Setting $\dim_{\RR} X = 4k$, this condition may be expressed as the vanishing of $\Omega_F^{k+1}$ together with the nonvanishing of $\Omega_F^k\wedge\ol{\Omega}_F^k$.}
\end{definition}

Indeed, suppose the cohomology class of the K\"ahler form $\omega$ is integral, so that it is possible to find a prequantization $L$, i.e. a holomorphic Hermitian line bundle such that the curvature of the associated Chern connection $\nabla$ is $-2 \pi i \omega$. Since the real part of the holomorphic symplectic structure on $Z$ is $\Re(\Omega) = \Re(\Omega_0) + \pi^*\omega$, and since $\Omega_0$ has canonical primitive $\alpha_0$, we obtain a unitary bundle 
\begin{equation}\label{sfbct}
(U,\hat\nabla) = (\pi^*L, \pi^*\nabla - 2\pi i\Re(\alpha_0))
\end{equation}
making $Z$ into a space-filling A-brane on the real cotangent bundle. We refer to this brane as the \emph{canonical coisotropic}, denoted $\mathcal{B}^{cc}$. 
Having defined the pair of branes $\mathcal{L}, \mathcal{B}^{cc}$, we may recover the original prequantization of the K\"{a}hler structure by taking their intersection and tensor quotient, as follows:
\[
(L, \nabla) \cong (U, \hat\nabla)|_{\mathcal{L}} \otimes (\mathcal{O}, d)^{*}. 
\]

\subsection{Holomorphic prequantization of space-filling A-branes}\label{holpq}

The exactness of the symplectic form on the cotangent bundle gives rise to a useful interpretation of our space-filling branes as holomorphic prequantizations.\footnote{The physical relevance of this interpretation is that the action of the A-model reduces to the holonomy, along the 1-dimensional boundary, of the corresponding non-unitary connection.}

\begin{definition}
Let $(U,\nabla)$ be a space-filling A-brane on the symplectic manifold $(X,\varpi)$, with curvature $-2\pi iF$.  When $\varpi$ is exact, i.e. $\varpi = dA$, we may define the complex (non-unitary) connection $D= \nabla + 2\pi A$, whose curvature is the $(2,0)$-form $-2\pi i(F + i\varpi) = -2\pi i\Omega_F$.  This connection endows $U$ with the structure of a holomorphic line bundle with holomorphic connection $D$.  We refer to this holomorphic pair $(U,D)$ as the \emph{holomorphic prequantization} of the brane. 
\end{definition}
We may apply this to our situation, since the cotangent bundle $(T^{*}M, \Im(\Omega_{0}))$ is an exact symplectic manifold, with canonical primitive $\Im(\alpha_{0})$. 
The resulting holomorphic prequantization of the space filling brane~\eqref{sfbct} is then
\[
(U, D) =  (U,\hat\nabla + 2\pi\Im(\alpha_0)) =(\pi^{*} L, \pi^{*} \nabla - 2 \pi i \alpha_{0}). 
\]
The curvature of this connection is $- 2 \pi i \Omega$, a closed holomorphic $(2,0)$-form. Hence, $D$ defines a holomorphic structure on $U$, and a holomorphic prequantization of $\Omega$. We summarise the content of the previous two sections in the following proposition. 

\begin{proposition} \label{prequantizedbranes}
The data of a K\"{a}hler structure $(g, I, \omega)$ on a manifold $M$, together with a unitary prequantization $(L, \nabla)$ of $\omega$, determine a pair of branes in the real cotangent bundle $T^*M$:
\begin{enumerate}
\item a Lagrangian brane $\mathcal{L}$ given by the zero section equipped with the trivial flat connection, and 
\item a space-filling brane $\mathcal{B}^{cc}$ given by the unitary line bundle $(U,\hat\nabla) = (\pi^*L,\pi^*\nabla - 2\pi i\Re(\alpha_0))$, with corresponding holomorphic prequantization $(U, D) =  (\pi^{*} L, \pi^{*} \nabla - 2 \pi i \alpha_{0})$, where $\alpha_{0}$ is the tautological holomorphic $(1,0)$-form on the holomorphic cotangent bundle. 
\end{enumerate}
Conversely, the pair of branes uniquely determines the prequantized K\"{a}hler structure by taking intersection and tensor quotient. Under this correspondence, Hamiltonian deformations of $\mathcal{L}$ correspond to variations of $\omega$ within a fixed K\"{a}hler class, with the accompanying variation of the prequantization. 
\end{proposition}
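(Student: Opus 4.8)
The plan is to assemble the pieces already established in Sections~\ref{metricbrane}--\ref{holpq}, since the proposition is essentially a repackaging of those constructions; the only genuine verifications are a single curvature computation and the restriction of the brane data to the zero section. First I would invoke the magnetic deformation identification~\eqref{magdef}, which presents $(Z,\Omega) \cong (T^*M,\, \Omega_0 + \pi^*\omega)$ with $\mathcal{L}$ realized as the zero section $0_M$. Since the ambient real symplectic form is $\varpi = \Im(\Omega_0) = \Im(\Omega)$ and the zero section is Lagrangian for the canonical symplectic form, $\mathcal{L}$ equipped with the trivial flat bundle is a Lagrangian brane, giving part (1) immediately.

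For part (2) I would verify the space-filling condition by a direct curvature computation on $(U,\hat\nabla)$ as defined in~\eqref{sfbct}. Writing $\hat\nabla = \pi^*\nabla - 2\pi i\,\Re(\alpha_0)$, the first term contributes $-2\pi i\,\pi^*\omega$ to the curvature (as $\nabla$ prequantizes $\omega$), while $d\,\Re(\alpha_0) = \Re(d\alpha_0) = \Re(\Omega_0)$ contributes $-2\pi i\,\Re(\Omega_0)$. Hence $F = \Re(\Omega_0) + \pi^*\omega = \Re(\Omega)$ and $\Omega_F = F + i\varpi = \Omega$, which is holomorphic symplectic by the analysis in Section~\ref{metricbrane}, so the definition of a space-filling A-brane is satisfied. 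The holomorphic prequantization then follows by recombining the real terms: $D = \hat\nabla + 2\pi\,\Im(\alpha_0) = \pi^*\nabla - 2\pi i\,\alpha_0$, since $-2\pi i\,\Re(\alpha_0) + 2\pi\,\Im(\alpha_0) = -2\pi i\,\alpha_0$, with curvature $-2\pi i\,\Omega$ of type $(2,0)$ as required in Section~\ref{holpq}.

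For the converse, I would restrict the space-filling brane to $\mathcal{L} = 0_M$: because the tautological form $\alpha_0$ vanishes on the zero section and $\pi^*L|_{0_M} = L$, the restriction $(U,\hat\nabla)|_{\mathcal{L}}$ is exactly $(L,\nabla)$, and the tensor quotient by the trivial bundle $(\mathcal{O},d)$ carried by $\mathcal{L}$ leaves this unchanged. Pulling $\Omega$ back along $0_M$ recovers $\omega = 0_M^*\Omega$; the complex structure $I$ is read off from the principal bundle $\pi: Z \to M$ (equivalently, it is encoded in $\Omega_0$), and the metric as $g = \omega I^{-1}$. Finally, the deformation statement follows from the local-potential description in Section~\ref{metricbrane}: a Hamiltonian isotopy of $\mathcal{L}$ through LS sections is generated by a single real function $K$, shifting the local sections $\mathcal{L}_i$ by $-\partial K$ and hence $\omega$ by $i\partial\overline{\partial}K$ within its fixed cohomology class, with the corresponding variation of the prequantization.

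The main obstacle is really a bookkeeping subtlety rather than a deep difficulty: one must keep the two complex structures straight, namely $I$ on $M$ (baked into $\Omega_0$ and the fibration $\pi$) versus the deformed $I_1 = \varpi^{-1}\Re(\Omega)$ on $T^*M$ for which $\Omega$ is holomorphic. The delicate point in the converse is to check that the brane data genuinely isolates $I$ itself, and not merely $I_1$, and that the sign conventions linking $d\alpha_0 = \Omega_0$, the unitary connection $\hat\nabla$, and the holomorphic connection $D$ remain consistent throughout the computation.
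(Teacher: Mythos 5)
Your proposal is correct and takes essentially the same route as the paper: Proposition~\ref{prequantizedbranes} is presented there without a separate proof, as a summary of Sections~\ref{metricbrane}--\ref{holpq}, and your argument reassembles exactly those ingredients --- the magnetic-deformation identification~\eqref{magdef} for the Lagrangian brane, the curvature computation giving $F = \Re(\Omega_0) + \pi^*\omega = \Re(\Omega)$ so that $\Omega_F = \Omega$, the identity $-2\pi i\,\Re(\alpha_0) + 2\pi\,\Im(\alpha_0) = -2\pi i\,\alpha_0$ for the holomorphic prequantization, the restriction/tensor-quotient recovery of $(L,\nabla)$ using the vanishing of $\alpha_0$ on the zero section, and the generating-function description of Hamiltonian deformations. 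The one place you go slightly beyond the paper is in explicitly flagging the distinction between $I$ on $M$ and the deformed structure $I_1$ on $T^*M$ in the converse direction, a point the paper leaves implicit in its claim that the symplectic principal bundle encodes the holomorphic moduli.
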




\subsection{Prequantization via holomorphic symplectic reduction} \label{reductionsection}
In Section \ref{metricbrane} we gave two constructions of the holomorphic symplectic deformation of the cotangent bundle $(Z, \Omega)$ and its smooth $\Im(\Omega)$-Lagrangian, $\Re(\Omega)$-symplectic (LS) submanifold $\mathcal{L}$: a \v{C}ech construction and a magnetic deformation construction. In this section we present a third construction via symplectic reduction which is available when the K\"{a}hler form $\omega$ has been prequantized. This construction has the advantage of uniformly constructing the spaces $(Z_{t}, \Omega_{t})$ and $\mathcal{L}_{t}$ associated to the rescalings $t \omega$ for $t \in \mathbb{R}$, along with their associated holomorphically prequantized brane structures when $t$ is integral. The results of this section are summarized in Proposition~\ref{reductionprop}. 

Let $P_{L}$ be the principal $\mathbb{C}^{*}$-bundle obtained from $L$ by removing its zero section, and let $\mathcal{E}$ be the Euler vector field, which generates the action by $\mathbb{C}^{*}$. The cotangent bundle $T^{*}P_{L}$ is a Hamiltonian $\mathbb{C}^{*}$-space, with moment map given by pairing covectors with $i \mathcal{E}$. Holomorphic symplectic reduction at the value $0$ recovers the holomorphic cotangent bundle of $M$:
\[
(T^{*}M, \Omega_{0}) \cong \mathcal{E}^{-1}(0)/\mathbb{C}^{*}. 
\]
Reducing at any non-zero value produces a holomorphic symplectic principal $T^*M$ bundle:
\[
(Z_{t}, \Omega_{t}) := \mathcal{E}^{-1}(\tfrac{t}{2\pi})/\mathbb{C}^{*}. 
\]

When $t \in \mathbb{R}$, we can show that $(Z_{t}, \Omega_{t})$ is the symplectic principal bundle encoding the K\"{a}hler class of $t \omega$ by producing a suitable LS submanifold. This is because the choice of such a submanifold allows us to identify $Z_{t}$ with a magnetic deformation of the cotangent bundle, as in Equation \ref{magdef}. To produce the LS submanifold, let $h$ be the Hermitian metric on $L$ whose Chern connection has curvature $-2 \pi i \omega$. Let $S \subset P_{L}$ be the corresponding unit $S^{1}$-bundle, and let $C_{S}$ denote its conormal bundle in $T^{*}P_{L}$, a Lagrangian submanifold with respect to the canonical symplectic form $\Im(\Omega_{0})$ on $T^*P_{L}$. Since $S$ is a real codimension $1$ submanifold of $P_{L}$, defined by the equation $h = 1$, its conormal bundle is a real line bundle over $S$, with global generating section $\frac{1}{2}d\log(h)$. This produces an isomorphism 
\begin{equation} \label{conormaltrivialiso}
S \times \mathbb{R} \to C_{S}, \qquad (z, \tau) \mapsto \tfrac{\tau}{2}d\log(h)_{z},
\end{equation}
for which the function $\mathcal{E}$ is given by projection onto the factor $\mathbb{R}$. We can thereby see that $C_{S}$ is invariant under $S^{1} \subseteq \mathbb{C}^{*}$, which acts only on the first factor $S$. We can reduce $C_{S}$ to produce Lagrangian submanifolds in $(Z_{t}, \Im(\Omega_{t}))$, for $t \in \mathbb{R}$. Explicitly, these are given by
\[
\mathcal{L}_{t} = (C_{S} \cap \mathcal{E}^{-1}(\tfrac{t}{2\pi}))/S^{1} \cong S/S^{1} = M.
\]
The LS submanifold $\Ll_t\subset Z_t$ defines a section of the map $\pi:Z_t\to M$, which we also refer to as $\Ll_t$.

In order to determine the pullback $\mathcal{L}_{t}^*(\Omega_{t})$, we first pullback $\alpha_{0}$, the primitive of $\Omega_{0}$ on $T^{*}P_{L}$, to $C_{S}$. Using the above isomorphism \ref{conormaltrivialiso}, this gives 
\[
\alpha_{0}|_{C_{S}} = i\tau \partial \log(h)|_{S}. 
\]
Restricting this to constant $\tau$, we get the contact structure on the $S^1$-bundle $p: S \to M$ which prequantizes $\omega$. Hence, 
\[
\Omega_{0}|_{C_{S}}= i d\tau \wedge \partial \log(h)|_{S} + i\tau \overline{\partial} \partial \log(h)|_{S} =  i d\tau \wedge \partial \log(h)|_{S} + 2\pi \tau p^{*} \omega,
\]
is the `symplectization' of the contact structure. The $S^{1}$-action is now Hamiltonian with respect to this symplectic form (for $\tau \neq 0$), with moment map given by projection to the factor $\mathbb{R}$ (which is equivalent to pairing covectors with $\mathcal{E}$). Hence $\Ll_t^*(\Omega_{t})$ is obtained by symplectic reduction, and is equal to 
\[
\mathcal{L}_{t}^*(\Omega_{t}) = t \omega. 
\]
This shows that $(Z_{t}, \Omega_{t})$ and $\mathcal{L}_{t}$ coincide with the principal bundles with sections constructed in Section~\ref{metricbrane} via magnetic deformation as in Equation \ref{magdef}. 

Rather than describing unitary branes by reduction, it is convenient to directly build their holomorphic prequantizations: to construct the prequantizations of $(Z_{t}, \Omega_{t})$ we start with a prequantization of $(T^{*}P_{L}, \Omega_{0})$ and attempt to reduce it to the various $Z_{t}$. The prequantization of the cotangent bundle is given by $(\mathcal{O}, d - 2 \pi i \alpha_{0})$. In order to reduce it, we restrict it to a level set $\mathcal{E}^{-1}(\frac{t}{2\pi})$ and attempt to descend it to the $\mathbb{C}^{*}$-quotient. This means that we must find a holomorphic line bundle with connection $(U_{t}, D_{t})$ on $Z_{t}$, and a flat isomorphism $(\mathcal{O}, d - 2\pi i \alpha_{0})|_{\mathcal{E}^{-1}(\frac{t}{2\pi})} \cong q^{*}(U_{t}, D_{t})$, where $q: \mathcal{E}^{-1}(\frac{t}{2\pi}) \to Z_{t}$ is the quotient map. In other words, the sheaf of sections of $(U_{t}, D_{t})$ can be defined to be the sections of $(\mathcal{O}, d - 2\pi i \alpha_{0})|_{\mathcal{E}^{-1}(\frac{t}{2\pi})}$ that are flat along the $\mathbb{C}^{*}$-orbits. A calculation shows that the monodromy of $d - 2\pi i \alpha_{0}$ along the $\mathbb{C}^{*}$-orbits is $\exp(-2 \pi i t)$. Hence, the prequantization descends to $Z_{t}$ if and only if $t \in \mathbb{Z}$. 

We summarise the constructions of this section in the following proposition, which also serves as a definition. 
\begin{proposition} \label{reductionprop}
Let the K\"{a}hler manifold $(M, I, \omega)$ be prequantized by the holomorphic line bundle $L$, equipped with Hermitian metric $h$ and unitary connection $\nabla$. Let $P_{L}$ denote the associated principal $\mathbb{C}^{*}$-bundle, and let $(T^{*}P_{L}, \Omega_{0})$ be its cotangent bundle, which is equipped with the holomorphic prequantization $(\mathcal{O}, d - 2 \pi i \alpha_{0})$. Furthermore, let $\mathcal{E}$ be the vector field on $P_{L}$ which generates the $\mathbb{C}^{*}$-scaling action, viewed as a function on $T^*P_{L}$. 
\begin{enumerate}
\item The symplectic reduced spaces, for $t\in\RR$,                                                                                          
\[
(Z_{t}, \Omega_{t}) = \mathcal{E}^{-1}(\tfrac{t}{2\pi})/\mathbb{C}^{*}, 
\]
are holomorphic symplectic principal $T^*M$ bundles which encode the K\"{a}hler class $t[ \omega]$. 
\item For $n \in \mathbb{Z}$, the spaces $(Z_{n}, \Omega_{n})$ are equipped with a holomorphic prequantization, defined by reducing the prequantization of $T^{*}P_{L}$
\[
(U_{n}, D_{n}) = (\mathcal{O}, d - 2 \pi i \alpha_{0})|_{ \mathcal{E}^{-1}(\frac{n}{2\pi}) }/\mathbb{C}^{*}.
\]
\item There are canonical holomorphic isomorphisms of line bundles 
\[
\phi_n:U_{n} \stackrel{\cong}{\longrightarrow} \pi^{*}(L^{\otimes n}), 
\]
where $\pi: Z_{n} \to M$ is the principal bundle projection. 
\end{enumerate}
Let $S \subset P_{L}$ be the unit $S^{1}$-bundle defined by the metric $h$, and let $C_{S} \subset T^{*}P_{L}$ denote its conormal bundle, which is Lagrangian with respect to $\Im(\Omega_{0})$. 
\begin{enumerate}
\item The reduction of $C_{S}$ in $Z_{t}$
\[
\mathcal{L}_{t} = (C_{S} \cap \mathcal{E}^{-1}(\tfrac{t}{2\pi}))/S^{1}
\]
is an LS submanifold which encodes the K\"{a}hler form $t \omega$. 
\item For $n \in \mathbb{Z}$, the morphism $\phi_n$ induces an isomorphism of complex line bundles with connection
\[
\Ll_n^*(\phi_n):\mathcal{L}_{n}^*(U_{n}, D_{n}) \stackrel{\cong}{\longrightarrow} (L^{\otimes n}, \nabla^{\otimes n}).  
\]
\end{enumerate}
\end{proposition}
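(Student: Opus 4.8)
The plan is to read Proposition~\ref{reductionprop} as a single holomorphic symplectic reduction together with a chain of verifications, reconstructing the claims in the order stated. The starting point is that the scaling action of $\mathbb{C}^*$ on $P_L$ is free and proper, so its lift to $T^*P_L$ is a free, proper, holomorphic symplectic action preserving $\Omega_0$, with holomorphic moment map the function $\mathcal{E}$ obtained by pairing covectors with the infinitesimal generator. First I would note that $\mathcal{E}$ is a submersion in the fibre directions, so each level set $\mathcal{E}^{-1}(\tfrac{t}{2\pi})$ is a smooth (real codimension two) coisotropic hypersurface and the Marsden--Weinstein quotient $Z_t$ is a holomorphic symplectic manifold of complex dimension $2\dim_{\mathbb{C}} M$. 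To see that $Z_t$ is a principal $T^*M$-bundle, I would exhibit the additive $T^*M$-action induced by translation along pulled-back cotangent directions, which commutes with the reduction; and to pin the class to $t[\omega]$ I would identify $Z_t$ with the magnetic deformation of \eqref{magdef} via the LS section $\mathcal{L}_t$ produced below, whose pullback computation $\mathcal{L}_t^*(\Omega_t)=t\omega$ fixes the class.

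Next I would establish the prequantization descent (items~2 and~3 of the first list). The datum $(\mathcal{O}, d - 2\pi i\alpha_0)$ on $T^*P_L$ has curvature $-2\pi i\Omega_0$, a $(2,0)$-form; restricting to a level set and noting that this curvature vanishes along the complex one-dimensional $\mathbb{C}^*$-orbits, the connection is flat along the orbits and therefore descends exactly when its holonomy around them is trivial. The central computation is that integrating $-2\pi i\alpha_0$ around the generating circle returns $-2\pi i t$, using that the contraction of $\alpha_0$ with the generator equals the moment map $\mathcal{E}=\tfrac{t}{2\pi}$ and that the circle has period $2\pi$; exponentiating yields the holonomy $\exp(-2\pi i t)$, which is trivial iff $t=n\in\mathbb{Z}$, giving $(U_n, D_n)$. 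To construct $\phi_n$ I would unwind the quotient: a section of $U_n$ is a section of $\mathcal{O}$ over $\mathcal{E}^{-1}(\tfrac{n}{2\pi})$ that is flat along orbits, hence a $\mathbb{C}^*$-equivariant function of weight $n$, which is precisely a section of $\pi^*(L^{\otimes n})$ since $L^{\otimes n}$ is the bundle associated to $P_L$ through the weight-$n$ character; verifying this identification is $\mathcal{O}$-linear and holomorphic produces the isomorphism.

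For the second list I would build the LS submanifold from the conormal bundle $C_S$ of the unit circle bundle $S=\{h=1\}\subset P_L$. The trivialization \eqref{conormaltrivialiso} writes $C_S\cong S\times\mathbb{R}$ with $\mathcal{E}$ the projection to $\mathbb{R}$ and with $S^1\subseteq\mathbb{C}^*$ acting only on $S$, so $C_S\cap\mathcal{E}^{-1}(\tfrac{t}{2\pi})$ is $S^1$-invariant and its $S^1$-quotient $\mathcal{L}_t$ is a section $M\cong S/S^1$ of $\pi$. To see $\mathcal{L}_t$ is LS and encodes $t\omega$ I would restrict $\Omega_0$ to $C_S$, recognize $\alpha_0|_{C_S}=i\tau\,\partial\log h|_S$ as the symplectization of the prequantizing contact form, and reduce, obtaining $\mathcal{L}_t^*(\Omega_t)=t\omega$. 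Finally, restricting $\phi_n$ along $\mathcal{L}_n$ and matching connections gives the last isomorphism: on the slice $\tau=\tfrac{n}{2\pi}$ the non-unitary $D_n$ restricts to $d+n\,\partial\log h$, the Chern connection of $L^{\otimes n}$, so $\mathcal{L}_n^*(U_n, D_n)\cong (L^{\otimes n}, \nabla^{\otimes n})$ as bundles with connection, consistently with Proposition~\ref{prequantizedbranes} in the case $n=1$.

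The main obstacle is the bookkeeping in the two places where conventions are decisive: the holonomy computation landing on exactly $\exp(-2\pi i t)$, so that the integrality threshold is $\mathbb{Z}$ rather than a rescaled lattice, and the weight-matching that makes $U_n$ the $n$-th power $\pi^*(L^{\otimes n})$ with the correct sign, together with the verification that $\mathcal{L}_n^*(\phi_n)$ respects connections and not merely the underlying line bundles. Keeping the factors of $2\pi$, the sign of $\alpha_0$, and the weight of the $\mathbb{C}^*$-character mutually consistent is the delicate part; once these are fixed, the remaining assertions are formal consequences of Marsden--Weinstein reduction and the associated-bundle description of $L^{\otimes n}$.
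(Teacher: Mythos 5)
Your proposal is correct and follows essentially the same route as the paper: the symplectic reduction of $T^{*}P_{L}$, the monodromy computation $\exp(-2\pi i t)$ governing descent of the prequantization, the identification of $Z_t$ with the magnetic deformation via the LS section, and the conormal-bundle/symplectization computation giving $\mathcal{L}_t^{*}(\Omega_t) = t\omega$ are exactly the paper's constructions (carried out in the text preceding the proposition), and its proof environment only supplies $\phi_n$ and the connection matching. Your description of $\phi_n$ via flat sections of $(\mathcal{O}, d - 2\pi i \alpha_0)|_{\mathcal{E}^{-1}(n/2\pi)}$ as weight-$n$ equivariant functions, i.e.\ sections of the associated bundle $\pi^{*}(L^{\otimes n})$, is precisely the identification the paper implements with the tautological section $s_n(\lambda_m) = \lambda_m^{\otimes n}$, so the two constructions coincide.
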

\begin{proof}
What remain to be established are the canonical isomorphisms between $U_{n}$ and $\pi^{*}(L^{\otimes n})$. These arise because there is a tautological section $s_{n}$ of $p^{*}(L^{\otimes n})$, where $p: P_{L} \to M$, which sends an element $\lambda_{m} \in P_{L}$ to the element $(\lambda_{m})^{\otimes n} \in L^{\otimes n}|_{m}$. If we further pull this back to $T^{*}P_{L}$, and then restrict to $\mathcal{E}^{-1}(\frac{n}{2\pi})$, we obtain a map $\phi'_{n}:  \mathcal{O}_{\mathcal{E}^{-1}(\frac{n}{2\pi})} \to \pi^{*}L^{\otimes n}$ covering $q: \mathcal{E}^{-1}(\frac{n}{2\pi}) \to Z_{n}$. Hence we need only show that this map is invariant under the $\mathbb{C}^{*}$-action on the domain. But this follows because $\nabla = d - 2 \pi i \alpha_{0}$ generates the weight $-n$ $\mathbb{C}^{*}$-action on $ \mathcal{O}|_{\mathcal{E}^{-1}(\frac{n}{2\pi})}$. Therefore, by reducing $\phi_n'$, we obtain the required isomorphism $\phi_n$ between $U_{n}$ and $\pi^{*}(L^{\otimes n})$.  

Pulling back to the LS submanifold we obtain an isomorphism $\mathcal{L}_{n}^{*}(\phi_{n})$ between $\mathcal{L}_{n}^*U_{n}$ and $L^{\otimes n}$. That this identifies the connections $D_{n}|_{\mathcal{L}_{n}}$ and $\nabla^{\otimes n}$ follows immediately from the fact that the pullback of $\alpha_{0}$ to $C_{S} \cap \mathcal{E}^{-1}(\frac{n}{2\pi}) \cong S$ is the canonical contact form prequantizing $n \omega$. 
\end{proof}

\subsection{Torus equivariance} \label{toricsection}
In this section we focus on the case of toric K\"{a}hler manifolds, and show how the toric structures lift to equivariant structures on the prequantized branes.

Let $(M, I, \omega)$ be a compact connected toric K\"{a}hler manifold of complex dimension $n$. This means that $M$ is equipped with a holomorphic effective action of a complex torus $\mathbb{T}_{\mathbb{C}} \cong (\mathbb{C}^{*})^{n}$, such that the action of the real compact subgroup $\mathbb{T} \cong (S^{1})^{n}$ is Hamiltonian, with moment map $\mu : M \to \mathfrak{t}^{*}$, where $\mathfrak{t}$ is the Lie algebra of $\mathbb{T}$. Therefore, the infinitesimal action $X : \mathfrak{t} \to \mathfrak{X}(M)$ is determined by the moment map via the equation 
\[
X_{a} = - \omega^{-1}(d \langle \mu, a \rangle ),
\]
for $a \in \mathfrak{t}$. The real Lie algebra $\mathfrak{t}$ is embedded into its complexification $\mathfrak{t}_{\mathbb{C}}$ as $i \mathfrak{t} \subset \mathfrak{t} \otimes \mathbb{C}$. The infinitesimal holomorphic action is given by 
\[
V_{a + ib} = \tfrac{1}{2}( (X_{b} - IX_{a}) - i(IX_{b} + X_{a})). 
\]
The kernel of the exponential map defines a lattice $2 \pi i \Lambda \subset \mathfrak{t}_{\mathbb{C}}$. We view $\Lambda \subset \mathfrak{t} \subset \mathfrak{t}_{\mathbb{C}}$, so that $2 \pi i \Lambda \subset i\mathfrak{t}$. A basis $e_{1}, ..., e_{n} \in \Lambda$ determines an isomorphism $\mathbb{T}_{\mathbb{C}} \cong (\mathbb{C}^{*})^{n}$ and a basis of holomorphic vector fields $V_{i} = V_{e_{i}}$. For simplicity let us fix such a basis. The image of the moment map $\Delta = \mu(M)$ is called the moment polytope, and it is given by the convex hull of $\mu(p)$, for $p$ the fixed points of the action. 

Given a fixed point $p$ there is a $\mathbb{T}_{\mathbb{C}}$-invariant chart $U_{p} \cong \mathbb{C}^{n}$ centred at $p$ which linearizes the torus action. This means that there are coordinates $z$ on $U_{p}$ such that the torus action takes the form 
\[
(w_{1}, ..., w_{n}) \ast (z_{1}, ..., z_{n}) = (\Pi_{i} w_{i}^{\lambda_{i}^{(1)}} z_{1}, ..., \Pi_{i} w_{i}^{\lambda_{i}^{(n)}} z_{n}),
\]
and the vector field $V_{i}$ is given by 
\[
V_{i} = \sum_{j} \lambda_{i}^{(j)} z_{j} \partial_{z_{j}},
\]
where $\lambda^{(1)}, ..., \lambda^{(n)}$ form a basis of the dual lattice $\Lambda^{*} \subset \mathfrak{t}^{*}$. By using an averaging argument, we can show that there is a $\mathbb{T}$-invariant K\"{a}hler potential $K_{p}$ for $\omega$ on $U_{p}$, which is unique if we require $K_{p}(p) = 0$. The moment map $\mu$ may be expressed in terms of the derivatives of this K\"{a}hler potential as follows
\begin{equation} \label{mommappot}
\mu - \mu(p) = \sum_{i} V_{i}(K_{p}) e^{i} = \sum_{j} z_{j} \tfrac{\partial K_{p}}{\partial z_{j}} \lambda^{(j)},
\end{equation}
where $e^{i}$ form the basis of $\Lambda^{*}$ dual to the given basis $e_{i}$. 

The holomorphic action of $\mathbb{T}_{\mathbb{C}}$ on $M$ lifts naturally to a holomorphic Hamiltonian action on $(T^{*}M, \Omega_{0})$ with moment map 
\[
J_{0} : T^{*}M \to \mathfrak{t}^{*}_{\mathbb{C}},
\]
defined for $\alpha \in T^{*}M$ and $u \in \mathfrak{t}_{\mathbb{C}}$ by the equation $ \langle J_{0}(\alpha), u \rangle = i \langle \alpha, V_{u} \rangle$. The torus action can also be lifted to a holomorphic Hamiltonian action on $Z_{t}$. Using the presentation of $Z_{t}$ as a magnetic deformation of the cotangent bundle (Equation \ref{magdef}), the moment map is given by
\[
J_{t} = J_{0} - i t \pi^{*}\mu : Z_{t} \to \mathfrak{t}_{\mathbb{C}}^{*}. 
\]
We denote by $V_{u}^{t}$ the Hamiltonian vector field on $Z_{t}$ associated to the function $\langle J_{t}, u \rangle$, for $u \in \mathfrak{t}_{\mathbb{C}}$. As a result of the following proposition, it projects to $V_{u}$ on $M$. 

\begin{proposition} \label{Hamiltonianlift}
The map $J_{t}$ is a holomorphic moment map for an effective holomorphic action of $\mathbb{T}_{\mathbb{C}}$ which lifts the torus action on $M$. This makes $(Z_{t}, \Omega_{t})$ into a holomorphic completely integrable system. Furthermore, the brane $\mathcal{L}_{t}$ is invariant under the compact torus $\mathbb{T}$, and $J_{t}$ restricts to the real moment map for $t \omega = \mathcal{L}_{t}^*(\Omega_{t})$: 
\[
J_{t}|_{\mathcal{L}_{t}} = -i t \mu. 
\]
\end{proposition}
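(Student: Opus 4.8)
The plan is to build the lifted action in two stages---first the compact torus, then its complexification---and to verify the moment-map formula infinitesimally. First I would recall that the holomorphic $\mathbb{T}_{\mathbb{C}}$-action on $M$ lifts to $(T^*M, \Omega_0)$ by the cotangent lift $\widetilde{V}_u$, a holomorphic Hamiltonian action whose generators project to $V_u$, preserve the tautological form $\alpha_0$, and admit the moment map $J_0$. The goal is to show that this action deforms, under the magnetic deformation $(Z_t, \Omega_t) \cong (T^*M, \Omega_0 + t\pi^*\omega)$ of Equation~\eqref{magdef}, into a holomorphic Hamiltonian action with moment map $J_t = J_0 - it\pi^*\mu$.

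The cleanest entry point is the compact torus $\mathbb{T}$. Since $\mathbb{T}$ acts on $M$ by K\"ahler isometries, its cotangent lift preserves $\Omega_0$ and the basic form $\pi^*\omega$ separately, hence preserves $\Omega_t$; it also preserves the deformed complex structure $I_1 = \Im(\Omega_0)^{-1}(\Re(\Omega_0) + \pi^*\omega)$, which is built functorially from $\Omega_0$ and $\omega$, and it fixes the zero section. This already yields the $\mathbb{T}$-invariance of $\mathcal{L}_t = 0_M$ (the third claim) and shows that $\mathbb{T}$ acts on $(Z_t, I_1)$ by biholomorphisms fixing $\mathcal{L}_t$. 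To obtain the full action I would then complexify: the compact action lifts the holomorphic $\mathbb{T}_{\mathbb{C}}$-action on $M$ and, because the fibres of $\pi : Z_t \to M$ are affine spaces on which the complexified generators act by affine maps, the complexified flows are complete and integrate to a global holomorphic $\mathbb{T}_{\mathbb{C}}$-action lifting the action on $M$, with each $V_u^t$ projecting to $V_u$. Effectiveness is then inherited from the effective action on $M$ through the equivariant projection $\pi$.

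To identify $J_t$ as the moment map I would verify $\iota_{V_u^t}\Omega_t = d\langle J_t, u\rangle$ infinitesimally. Writing $\Omega_t = \Omega_0 + t\pi^*\omega$ and using that $V_u^t$ projects to $V_u$, the computation reduces, via $\iota_{\widetilde{V}_u}\Omega_0 = \pm d\langle J_0, u\rangle$, essentially to the K\"ahler moment-map identity on $M$ expressing $\iota_{V_u}\omega$ through $\overline{\partial}\langle\mu, u\rangle$; this is exactly the relation that the correction term $-it\pi^*\mu$ is designed to absorb. The completely integrable system claim then follows by a dimension count: $\dim_{\mathbb{C}} Z_t = 2n$, and the $n$ components $\langle J_t, e_i\rangle$ are holomorphic (being components of a holomorphic moment map), pairwise Poisson-commuting (as $\mathbb{T}_{\mathbb{C}}$ is abelian), and generically functionally independent (already so for the fibrewise-linear $J_0$), so $J_t$ exhibits $(Z_t, \Omega_t)$ as a holomorphic completely integrable system.

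The final claim is immediate once $\mathcal{L}_t$ is identified with the zero section: since $J_0$ is fibrewise linear it vanishes on $0_M$, so $J_t|_{\mathcal{L}_t} = -it\pi^*\mu|_{0_M} = -it\mu$, which under the embedding $\mathfrak{t} \hookrightarrow i\mathfrak{t} \subset \mathfrak{t}_{\mathbb{C}}$ is precisely the real moment map of $\mathbb{T}$ for $t\omega = \mathcal{L}_t^*\Omega_t$. The hard part will be the first claim: checking that the complexified generators integrate to a genuine, globally defined $\mathbb{T}_{\mathbb{C}}$-action on $(Z_t, I_1)$ rather than a merely local one, and verifying the moment-map equation while carefully tracking the factors of $i$ arising both from the embedding $\mathfrak{t}\hookrightarrow i\mathfrak{t}$ and from the normalization in which $\Im(\Omega_0)$ is the canonical symplectic form. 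Once the interplay between the magnetic correction $-it\pi^*\mu$ and the deformed complex structure $I_1$ is under control, the remaining assertions follow as above.
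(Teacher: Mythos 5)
Your proposal is correct, but it follows a genuinely different route from the paper's proof. The paper never integrates a vector field: over each torus-invariant chart $U_p$ it uses the invariant K\"ahler potential $K_p$ to define a holomorphic symplectomorphism $\varphi_p^t\colon (T^*U_p,\Omega_0)\to (Z_t|_{U_p},\Omega_t)$, $\alpha\mapsto \alpha+t\partial K_p$, and then Equation \ref{mommappot} gives $(\varphi_p^t)^*J_t = J_0 - it\mu(p)$, so that $J_t$ is locally the standard cotangent-bundle moment map shifted by a constant; the action, its effectiveness, the complete integrability, and the $\mathbb{T}$-invariance of $\mathcal{L}_t$ (which pulls back to the graph of $-t\partial K_p$) are all transported wholesale from $(T^*M,\Omega_0,J_0)$, using that the charts $U_p$ are themselves $\mathbb{T}_{\mathbb{C}}$-invariant. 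You instead work globally in the magnetic trivialization of Equation \ref{magdef}: lift the compact torus by the cotangent lift, complexify via a completeness argument, and verify the moment-map identity infinitesimally. What your route buys is independence from the toric-specific input (invariant potentials and Equation \ref{mommappot}); it is essentially the argument required for the generalization stated in the paper's remark immediately after the proposition, where the torus is replaced by an arbitrary integrable system compatible with the complex structure. What it costs is the two verifications you flag, and both do go through: (i) completeness holds because in canonical coordinates $I_1$ has coefficients depending only on the base point (as $\Re(\Omega_0)$ and $\Im(\Omega_0)$ are constant-coefficient and $\pi^*\omega$ is basic), while the cotangent lift $\widetilde{X}_b$ is fibrewise linear, so the complexified generators are fibrewise affine and cover complete flows on the compact base; (ii) for the moment map, the generator attached to a compact direction $ib$ is the $(1,0)$-part of $\widetilde{X}_b$ with respect to $I_1$, not $I_0$, but since $\Omega_t$ has type $(2,0)$ for $I_1$, contracting that $(1,0)$-part equals contracting $\widetilde{X}_b$ itself, giving $\iota_{V^t_{ib}}\Omega_t = \iota_{\widetilde{X}_b}\Omega_0 + t\,\pi^*(\iota_{X_b}\omega)$, so the required identity $\iota_{V^t_{ib}}\Omega_t = d\langle J_t, ib\rangle$ reduces precisely to the real K\"ahler moment-map identity relating $\iota_{X_b}\omega$ and $d\langle\mu,b\rangle$ on $M$; the non-compact directions then follow from complex-linearity of $u\mapsto V^t_u$, and the $I_1$-holomorphy of $J_t$ comes for free because its component differentials are then of type $(1,0)$. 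Your treatment of the remaining claims ($\mathbb{T}$-invariance of the zero section, the dimension count for integrability, and $J_t|_{\mathcal{L}_t} = J_0|_{0_M} - it\mu = -it\mu$) coincides with, or is even slightly simpler than, the paper's.
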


\begin{proof}
In this proof, we use the magnetic deformation construction of $(Z_{t}, \Omega_{t}, \mathcal{L}_{t})$ given in Equation \ref{magdef}. The torus invariant charts $U_{p}$ cover $M$ and their pre-images $Z_{t}|_{U_{p}} = \pi^{-1}(U_{p})$ cover $Z_{t}$. Hence, we prove the theorem by checking the statements locally in each of these open sets. Over the open set $U_{p}$ we have the canonical torus-invariant K\"{a}hler potential $K_{p}$ and this determines a holomorphic symplectomorphism
\[
\varphi^{t}_{p} : (T^{*}U_{p}, \Omega_{0}) \to (Z_{t}|_{U_{p}}, \Omega_{t}), \qquad \alpha_{x} \mapsto \alpha_{x} + t \partial K_{p}. 
\]
Pulling back $J_{t}$ we get 
\begin{align*}
J_{t} \circ \varphi^{t}_{p}(\alpha_{x}) &= J_{0}(\alpha_{x} + t \partial K_{p}) - i t \mu(x) \\
&= J_{0}(\alpha_{x}) + it \sum_{i} V_{i}(K_{p})e^{i} - it \mu(x) \\
&= J_{0}(\alpha_{x}) - i t \mu(p),
\end{align*}
where, in the third line, we are using the expression in Equation \ref{mommappot} for the moment map given in terms of the derivatives of the K\"{a}hler potential. The claims about the holomorphic action on $(Z_{t}, \Omega_{t})$ then follow from the corresponding facts about the action on $(T^{*}M, \Omega_{0})$. 

The brane bisection $\mathcal{L}_{t}$ pulls back under $\varphi^{t}_{p}$ to the graph of $- t \partial K_{p}$. This is invariant under the action of the compact torus since both $K_{p}$ and the complex structure are. 

Finally, to establish the claim about the restriction of the moment map to $\mathcal{L}_{t}$, note that in the magnetic deformation construction, this submanifold is simply given by the zero section in the cotangent bundle. Hence, $J_{0}|_{\mathcal{L}_{t}} = 0$, and the claim follows. 
\end{proof}


\begin{remark}
The holomorphic completely integrable system $(Z_{1}, \Omega_{1}, J_{1})$ defines a canonical complexification of the toric K\"{a}hler manifold $(M, I, \omega, \mu)$. The original toric structure can then be viewed, via an equivariant generalization of Gukov-Witten \cite{Gukov:2008ve}, as the intersection of two equivariant branes. The action of the compact torus $\mathbb{T}$ turns out not to be essential, and Proposition \ref{Hamiltonianlift} may be generalized to the setting of any completely integrable system on a K\"{a}hler manifold which is compatible with the complex structure. 
\end{remark}

We now assume that the cohomology class of $\omega$ is integral, or equivalently that the edges of the moment polytope $\Delta$ lie in $\frac{1}{2\pi} \Lambda^{*}$. Let $(L, h, \nabla)$ be the prequantization. Using Kostant's prescription, the torus action can be lifted to a holomorphic equivariant action on $L$. Infinitesimally, it is given by 
\[
A(a) = \nabla_{V_{a}} + 2 \pi \langle \mu, a \rangle, 
\]
for $a \in \mathfrak{t}_{\mathbb{C}}$. Note that only the action of $i \mathfrak{t}$ preserves the metric. This infinitesimal action integrates to an equivariant action of $\mathbb{T}_{\mathbb{C}}$ if the vertices of the moment polytope lie in the lattice $\frac{1}{2\pi} \Lambda^{*}$, and this can be achieved by shifting the moment map $\mu$ by a constant.

We can now repeat the constructions from Section \ref{reductionsection} in the presence of the $\mathbb{T}_{\mathbb{C}}$ action to produce the equivariant version of Proposition \ref{reductionprop}. Namely, the $\mathbb{T}_{\mathbb{C}}$ action on the associated $\mathbb{C}^{*}$-bundle $P_{L}$ lifts to a Hamiltonian action on $T^{*}P_{L}$ which commutes with the existing Hamiltonian $\mathbb{C}^{*}$-action. This Hamiltonian action therefore passes to the reduced spaces $(Z_{t}, \Omega_{t})$, recovering the results of Proposition \ref{Hamiltonianlift}. Furthermore, the torus action on $T^{*}P_{L}$ lifts to the trivial equivariant action on the prequantization $(\mathcal{O}, d - 2 \pi i \alpha_{0})$, and this also descends to the reduced spaces. We therefore obtain the following proposition. 

\begin{proposition} \label{liftedequivariantaction}
The $\mathbb{T}_{\mathbb{C}}$-action on $(Z_{n}, \Omega_{n})$ lifts to an equivariant action on the prequantization $(U_{n}, D_{n})$. Infinitesimally, this is given by the holomorphic version of Kostant's prescription:
\[
A_{n}(a) = D_{n}(V_{a}^{n}) + 2 \pi i \langle J_{n}, a \rangle,
\]
for $a \in \mathfrak{t}_{\mathbb{C}}$. Furthermore, the canonical isomorphism $U_{n} \cong \pi^{*}(L^{\otimes n})$ from Proposition \ref{reductionprop} is equivariant. 
\end{proposition}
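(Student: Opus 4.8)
The plan is to prove everything upstairs on the cotangent bundle $T^{*}P_{L}$, where the prequantization is the \emph{trivial} bundle $(\mathcal{O}, d - 2\pi i\alpha_{0})$ and the lifted action is completely transparent, and then transport the conclusions down to $(U_{n}, D_{n})$ by the $\mathbb{C}^{*}$-reduction of Proposition~\ref{reductionprop}. First I would take the Kostant action of $\mathbb{T}_{\mathbb{C}}$ on $P_{L}$ and pass to its cotangent lift on $T^{*}P_{L}$. This cotangent lift is automatically Hamiltonian and, being the lift of a $P_{L}$-action that commutes with the fibrewise $\mathbb{C}^{*}$-scaling, it commutes with the $\mathbb{C}^{*}$-action generated by $\mathcal{E}$; in particular it preserves $\mathcal{E}$ and hence the level set $\mathcal{E}^{-1}(n/2\pi)$. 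Since a cotangent lift always preserves the tautological form $\alpha_{0}$, it also preserves the connection $d - 2\pi i\alpha_{0}$ and acts on $\mathcal{O}$ as the trivial equivariant action. All of this structure is $\mathbb{C}^{*}$-invariant, so it descends through the quotient $q:\mathcal{E}^{-1}(n/2\pi)\to Z_{n}$ to give an equivariant $\mathbb{T}_{\mathbb{C}}$-action on $(U_{n}, D_{n})$ lifting the action on $(Z_{n},\Omega_{n})$ of Proposition~\ref{Hamiltonianlift}.

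For the infinitesimal Kostant formula, I would compute the generator upstairs. Because the action on $\mathcal{O}$ is the trivial equivariant action, its infinitesimal generator on sections (i.e.\ functions) is the Lie derivative $\mathscr{L}_{\widetilde{V}_{a}}$ along the cotangent lift $\widetilde{V}_{a}$ of $V_{a}$. Writing the covariant derivative of $d - 2\pi i\alpha_{0}$ as $D_{\widetilde{V}_{a}} = \widetilde{V}_{a} - 2\pi i\,\alpha_{0}(\widetilde{V}_{a})$, we obtain the identity $\mathscr{L}_{\widetilde{V}_{a}} = D_{\widetilde{V}_{a}} + 2\pi i\,\alpha_{0}(\widetilde{V}_{a})$. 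The zeroth-order term $\alpha_{0}(\widetilde{V}_{a})$ is exactly the Hamiltonian generating $\widetilde{V}_{a}$, which is the upstairs incarnation of the moment map; reducing this identity along $q$ sends $\widetilde{V}_{a}\mapsto V_{a}^{n}$, $D_{\widetilde{V}_{a}}\mapsto D_{n}(V_{a}^{n})$, and the contraction term to a multiple of $\langle J_{n}, a\rangle$ that I will identify as $2\pi i\langle J_{n}, a\rangle$, yielding $A_{n}(a) = D_{n}(V_{a}^{n}) + 2\pi i\langle J_{n}, a\rangle$.

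The equivariance of the isomorphism $\phi_{n}:U_{n}\xrightarrow{\cong}\pi^{*}(L^{\otimes n})$ I would deduce directly from its construction in Proposition~\ref{reductionprop}. That isomorphism descends from the map $\phi_{n}'$ built out of the tautological section $s_{n}$ of $p^{*}(L^{\otimes n})$, which assigns $(\lambda_{m})^{\otimes n}$ to $\lambda_{m}\in P_{L}$. Since the $\mathbb{T}_{\mathbb{C}}$-action on $P_{L}$ is by construction the restriction of the Kostant equivariant action on $L$, the section $s_{n}$ intertwines the action on $P_{L}$ with the induced action on $L^{\otimes n}$; hence $\phi_{n}'$ is equivariant over $\mathcal{E}^{-1}(n/2\pi)$, and its reduction $\phi_{n}$ is equivariant as claimed.

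The step I expect to require the most care is the precise bookkeeping of the factors of $i$ and the signs in the infinitesimal formula, since $V_{a}$ is a complex $(1,0)$ vector field, $\alpha_{0}$ is a $(1,0)$-form, $D_{n}$ is a holomorphic connection, and the holomorphic moment map carries the nonstandard normalization $\langle J_{0}(\alpha), u\rangle = i\langle\alpha, V_{u}\rangle$. Rather than trust the contraction computation alone to fix the coefficient, I would pin it down independently by restricting to the LS submanifold: using $\mathcal{L}_{n}^{*}(U_{n}, D_{n})\cong(L^{\otimes n},\nabla^{\otimes n})$ from Proposition~\ref{reductionprop} together with $J_{n}|_{\mathcal{L}_{n}} = -i n\mu$ from Proposition~\ref{Hamiltonianlift}, the proposed formula restricts to $\nabla^{\otimes n}_{V_{a}} + 2\pi\langle n\mu, a\rangle$, which is precisely the real Kostant prescription $A(a) = \nabla_{V_{a}} + 2\pi\langle\mu, a\rangle$ applied to $L^{\otimes n}$. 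This consistency simultaneously fixes the coefficient as $2\pi i$ and confirms the overall normalization.
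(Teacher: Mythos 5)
Your proposal is correct and takes essentially the same approach as the paper: the paper's proof is precisely the paragraph preceding the proposition, which observes that the $\mathbb{T}_{\mathbb{C}}$-action lifts to a Hamiltonian action on $T^{*}P_{L}$ commuting with the $\mathbb{C}^{*}$-action and to the trivial equivariant structure on the prequantization $(\mathcal{O}, d - 2\pi i \alpha_{0})$, so that the lifted action, the Kostant formula, and the equivariance of $\phi_{n}$ (via the tautological section $s_{n}$) all descend through the reduction. Your two additions, the operator identity $\mathscr{L}_{\widetilde{V}_{a}} = D_{\widetilde{V}_{a}} + 2\pi i\,\alpha_{0}(\widetilde{V}_{a})$ and the normalization check against the real Kostant prescription on $\mathcal{L}_{n}$ (using $J_{n}|_{\mathcal{L}_{n}} = -in\mu$ from Proposition \ref{Hamiltonianlift}), make explicit what the paper leaves implicit, and the latter check is indeed the right way to pin down the coefficient $2\pi i$ given the paper's moment-map conventions.
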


\subsection{K\"ahler quantization} \label{thequantization}
In Section \ref{Braneprequant} we interpreted a prequantized K\"{a}hler structure as a pair of branes $\mathcal{B}^{cc}$ and $\mathcal{L}$ in the A-model of the cotangent bundle of $M$. Following Gukov-Witten \cite{Gukov:2008ve}, we now interpret the vector space of global holomorphic sections of $L$ in terms of a space of homomorphisms between the branes in the extended A-model:
\[
H^{0}(M, L) \cong \Hom{\mathcal{L}, \mathcal{B}^{cc}}. 
\]
In order to realise this isomorphism, we first choose a ``real" polarization of the holomorphic symplectic variety $(Z, \Omega)$ underlying $\mathcal{B}^{cc}$. By this we mean a holomorphic Lagrangian foliation $\mathcal{F}$ of $(Z, \Omega)$. We then consider the subset of leaves of this foliation which intersect the brane $\mathcal{L}$. Our proposal for the space of homomorphisms is the space of holomorphic sections of $(U,D)$ restricted to this subset which are covariantly constant along the foliation. It will be useful in this section to work with the sequence of prequantized branes $(Z_{n}, \Omega_{n}, \mathcal{L}_{n}, U_{n}, D_{n})$, for $n \in \mathbb{Z}$, which was developed in Section \ref{reductionsection}. In this case, the space of homomorphisms $\Hom{\mathcal{L}_{n}, \mathcal{B}^{cc}_{n}}$ will be shown to be isomorphic to the global holomorphic sections of $L^{\otimes n}$.

A natural choice for the polarization is given by the projection map $\pi : Z_{n} \to M$, whose fibres are Lagrangian affine spaces, and all of which intersect $\mathcal{L}_{n}$. A holomorphic section of $(U_{n},D_{n})$ which is constant in the fibrewise directions is then easily seen to arise as the pullback of a holomorphic section of $L^{\otimes n}$. Hence this realises the desired isomorphism.

When $(M, I, \omega)$ is a toric K\"{a}hler structure, there is an alternate polarization of $Z_{n}$ which is better adapted to the multiplicative structures that we will eventually consider. It is given by the fibres of the holomorphic moment map $J_{n} : Z_{n} \to \mathfrak{t}^{\ast}_{\mathbb{C}}$ constructed in Section \ref{toricsection}. The fibres of this map are connected but not simply connected, and therefore the monodromy of the connection $D_{n}$ restricted to these fibres may be non-trivial. This obstructs the existence of holomorphic sections which are covariantly constant along the polarization. A fibre $J_{n}^{-1}(\xi)$ is a \emph{Bohr-Sommerfeld} fibre if the monodromy of the connection $(U_{n}, D_{n})|_{J_{n}^{-1}(\xi)}$ is trivial. Let $\mathcal{F}_{BS}(\mathcal{L}_{n}, \mathcal{B}_{n}^{cc})$ denote the set of Bohr-Sommerfeld fibres which have a non-trivial intersection with $\mathcal{L}_{n}$. In this setting, the space of homomorphisms in the A-model is defined to be the direct sum over $\mathcal{F}_{BS}(\mathcal{L}_{n}, \mathcal{B}_{n}^{cc})$ of the vector spaces of covariantly constant sections of $U_{n}$ restricted to the corresponding Bohr-Sommerfeld fibres: 
\begin{equation} \label{proposalforquant}
\Hom{\mathcal{L}_{n}, \mathcal{B}^{cc}_{n}} \coloneqq \bigoplus_{J_{n}^{-1}(\xi) \in \mathcal{F}_{BS}(\mathcal{L}_{n}, \mathcal{B}_{n}^{cc})} H^{0}(J_{n}^{-1}(\xi), U_{n})^{D_{n}}. 
\end{equation}
Each summand consists of the flat sections of a line bundle, and hence is a $1$-dimensional vector space. In order to determine this space of homomorphisms, we therefore need to determine the set $\mathcal{F}_{BS}(\mathcal{L}_{n}, \mathcal{B}_{n}^{cc})$ of Bohr-Sommerfeld fibres which intersect the brane $\mathcal{L}_{n}$. 
\begin{proposition} \label{BSdetermination}
Consider the polarization of $(Z_{n}, \Omega_{n})$ defined by the holomorphic moment map $J_{n}: Z_{n} \to \mathfrak{t}_{\mathbb{C}}^{*}$. A fibre $J_{n}^{-1}(\xi)$ is Bohr-Sommerfeld for the prequantization $(U_{n}, D_{n})$ if and only if $\xi \in \frac{1}{2\pi i} \Lambda^{*}$. Furthermore, the fibre $J_{n}^{-1}(\xi)$ has non-trivial intersection with the brane $\mathcal{L}_{n}$ if and only if $\xi \in -i n \Delta$, where $\Delta  = \mu(M)$ is the image of the real moment map $\mu$. As a result, the space of homomorphisms between $\mathcal{B}^{cc}_{n}$ and $\mathcal{L}_{n}$ is given by 
\[
\Hom{\mathcal{L}_{n}, \mathcal{B}^{cc}_{n}} = \bigoplus_{ \lambda \in \Lambda^{*} \cap 2 \pi n \Delta } H^{0}(J_{n}^{-1}(\tfrac{\lambda}{2\pi i}), U_{n})^{D_{n}} \cong  \bigoplus_{ \lambda \in \Lambda^{*} \cap 2 \pi n \Delta } \mathbb{C} \lambda. 
\]
\end{proposition}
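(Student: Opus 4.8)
The plan is to establish the two geometric conditions in the statement separately and then combine them into the final count. I would dispatch the intersection condition first, since it is immediate from the behaviour of the moment map on the brane. By Proposition~\ref{Hamiltonianlift} we have $J_n|_{\mathcal{L}_n} = -in\mu$, and since $\mathcal{L}_n$ projects isomorphically onto $M$ under $\pi$ while $\mu(M) = \Delta$, the image $J_n(\mathcal{L}_n)$ is exactly $-in\Delta$. Hence the fibre $J_n^{-1}(\xi)$ meets $\mathcal{L}_n$ if and only if $\xi \in -in\Delta$, which is the second assertion.

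The heart of the argument is the Bohr--Sommerfeld condition. Because $(Z_n, \Omega_n, J_n)$ is a holomorphic completely integrable system by Proposition~\ref{Hamiltonianlift}, each fibre $J_n^{-1}(\xi)$ is an orbit of the $\mathbb{T}_{\mathbb{C}}$-action, and a regular fibre is therefore homotopy equivalent to the compact torus $\mathbb{T}$. Its fundamental group is generated by the loops $\gamma_\lambda : t \mapsto \exp(2\pi i t\lambda)\cdot x_0$ with $\lambda$ ranging over the lattice $\Lambda$, using that $2\pi i\Lambda$ is the kernel of the exponential so that $\exp(2\pi i\lambda) = e$. Since the fibre is $\Omega_n$-Lagrangian and the curvature of $D_n$ is the $(2,0)$-form $-2\pi i\Omega_n$, the connection $D_n$ restricts to a flat connection on the fibre, so its monodromy is a well-defined character of $\pi_1(J_n^{-1}(\xi))$.

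To compute this monodromy I would use the equivariant lift of Proposition~\ref{liftedequivariantaction}. Choosing a $\mathbb{T}_{\mathbb{C}}$-invariant frame $\sigma$ of $U_n$ along the orbit, the relation $A_n(a) = D_n(V_a^n) + 2\pi i\langle J_n, a\rangle$ gives $D_n(V_a^n)\sigma = -2\pi i\langle\xi, a\rangle\sigma$ on $J_n^{-1}(\xi)$, so the connection form of $D_n$ in the frame $\sigma$ evaluates on the generating vector field $V_a^n$ as $-2\pi i\langle\xi, a\rangle$. The frame $\sigma$ is single-valued around $\gamma_\lambda$ precisely because the corresponding group element $\exp(2\pi i\lambda) = e$ acts as the identity, so integrating along $\gamma_\lambda$ yields holonomy $\exp\bigl(2\pi i\langle 2\pi i\xi, \lambda\rangle\bigr)$ (up to a sign I would fix by bookkeeping). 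This is trivial for every $\lambda \in \Lambda$ exactly when $2\pi i\xi$ pairs integrally with $\Lambda$, that is $\xi \in \frac{1}{2\pi i}\Lambda^*$. The main obstacle is precisely this holonomy computation: one must keep careful track of the periods and signs in the integral, identify the correct generators of $\pi_1$ of the orbit, and check that over a boundary point of the polytope—where the fibre is a non-regular orbit whose loops generate only a sublattice of $\Lambda$—the fibre remains Bohr--Sommerfeld and still carries a one-dimensional space of flat sections.

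Finally I would assemble the count. Writing a Bohr--Sommerfeld value as $\xi = \tfrac{\lambda}{2\pi i}$ with $\lambda = 2\pi i\xi \in \Lambda^*$, the intersection condition $\xi \in -in\Delta$ translates into $\lambda \in 2\pi n\Delta$, so the fibres contributing to the morphism space are indexed by $\Lambda^* \cap 2\pi n\Delta$. Each summand $H^0(J_n^{-1}(\tfrac{\lambda}{2\pi i}), U_n)^{D_n}$ is the space of covariantly constant sections of a flat line bundle over a connected fibre with trivial holonomy, hence one-dimensional, and labelling its generator by $\lambda$ produces the stated isomorphism $\Hom{\mathcal{L}_n, \mathcal{B}^{cc}_n} \cong \bigoplus_{\lambda \in \Lambda^* \cap 2\pi n\Delta} \mathbb{C}\lambda$.
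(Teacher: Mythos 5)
Your strategy coincides with the paper's own proof: the intersection criterion is read off from $J_{n}|_{\mathcal{L}_{n}} = -in\mu$ (Proposition \ref{Hamiltonianlift}), and the Bohr--Sommerfeld criterion follows from the holonomy $\exp((2\pi i)^{2}\langle \xi, a\rangle)$, which you extract, exactly as the paper does, from the identity $D_{n}(V_{a}^{n}) = A_{n}(a) - 2\pi i \langle J_{n}, a\rangle$ of Proposition \ref{liftedequivariantaction}, the equivariant term contributing nothing to the monodromy; the final translation $\lambda = 2\pi i \xi$, under which $\xi \in -in\Delta$ becomes $\lambda \in 2\pi n\Delta$, is also identical.

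There is, however, one genuine gap, and it is precisely the point you flag and then set aside: the non-regular fibres. Your premise that every fibre of $J_{n}$ is a single $\mathbb{T}_{\mathbb{C}}$-orbit is false. In the linearized chart at a fixed point $p$ one has $(\varphi_{p}^{n})^{*}(J_{n}) = i\sum_{i} z_{i}p_{i}\lambda^{(i)} - in\mu(p)$, so a degenerate fibre is a product of factors of the form $\{(z_{i},p_{i}) : z_{i}p_{i} = 0\}$ with copies of $\mathbb{C}^{*}$, i.e.\ a union of orbits; for $T^{*}\mathbb{P}^{1}$ such a fibre consists of the zero section together with two cotangent lines. This case cannot be deferred: the lattice points of $2\pi n\Delta$ lying on the \emph{boundary} of the polytope index exactly these degenerate fibres, and they contribute to $\Hom{\mathcal{L}_{n}, \mathcal{B}^{cc}_{n}}$ --- for $M = \mathbb{P}^{1}$ and $n = 1$ \emph{both} contributing fibres are of this type. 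So your regular-fibre computation does not by itself establish the ``if and only if'' of the proposition, nor the one-dimensionality of the boundary summands. The paper closes this gap via the local description above: a full fibre differs from its intersection with $Z_{n}|_{M^{o}}$, where $\pi\times J_{n}$ identifies every fibre of $J_{n}|_{M^{o}}$ with $M^{o}\cong \mathbb{T}_{\mathbb{C}}$, only by contractible pieces, so the Bohr--Sommerfeld analysis can be run uniformly on the dense locus $M^{o}$. Loops of the open part that become contractible in the full fibre automatically have trivial holonomy, because $D_{n}$ is flat on the entire fibre; hence no condition is lost in this reduction, the criterion remains $\xi \in \frac{1}{2\pi i}\Lambda^{*}$ for every fibre, and connectedness of the full fibre still yields one-dimensional spaces of flat sections. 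Supplying this reduction (or an equivalent direct analysis of the wall fibres, using that the vertices of $\Delta$ lie in $\frac{1}{2\pi}\Lambda^{*}$) is what your proof is missing.
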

\begin{proof}
To begin, Proposition \ref{Hamiltonianlift} states that $J_{n}|_{\mathcal{L}_{n}} = -in \mu$, which has image $-in \Delta$. From this it follows that the fibre $J_{n}^{-1}(\xi)$ intersects $\mathcal{L}_{n}$ if and only if $\xi \in -in \Delta$. 

In order to determine the geometry of the fibres, we make use of the local isomorphisms $\varphi_{p}^{n}: (T^{*}U_{p}, \Omega_{0}) \to (Z_{n}|_{U_{p}}, \Omega_{n})$ constructed in the proof of Proposition \ref{Hamiltonianlift}. Recall that $(\varphi^{n}_{p})^{*}(J_{n}) = J_{0} - i n \mu(p)$. Using the linearized coordinates $(z_{i})$ on $U_{p}$ introduced in Section \ref{toricsection} we get coordinates $(z_{i}, p_{i})$ on $T^{*}U_{p}$ for which 
\[
(\varphi^{n}_{p})^{*}(J_{n}) = i \sum_{i} z_{i} p_{i} \lambda^{(i)} - in \mu(p). 
\]
From this description, we see that the fibres of $J_{n}|_{U_{p}}$ are submanifolds of the form 
\[
\{ (z, p) \ | \ zp = 0 \}^{k} \times (\mathbb{C}^{*})^{n-k},
\]
for some values of $k$ which depend on the specific fibre. If we further restrict $J_{n}$ to the locus where the coordinates $z_{i} \neq 0$, then the fibres are isomorphic to the complex torus $\mathbb{T}_{\mathbb{C}}$. The difference between the full fibres and the fibres over the restricted locus consists of contractible components. Hence, for the purpose of determining the Bohr-Sommerfeld fibres, it suffices to restrict to the locus $z_{i} \neq 0$. More invariantly, we may restrict $J_{n}$ to $Z_{n}|_{M^{o}}$, where $M^{o} \subset M$ is the dense open subset where the complex torus action is free and transitive. Note that we have a canonical isomorphism 
\[
\pi \times J_{n} : Z_{n}|_{M^{o}} \to M^{o} \times \mathfrak{t}^{*}_{\mathbb{C}},
\]
which shows that the fibres of $J_{n}|_{M^{o}}$ are isomorphic to $M^{o} \cong \mathbb{T}_{\mathbb{C}}$. These fibres are torus invariant, and the Hamiltonian vector fields $V_{a}^{n}$, for $a \in \mathfrak{t}_{\mathbb{C}}$, restrict to global generators of their tangent bundles. In order to determine the monodromy, recall from Proposition \ref{liftedequivariantaction} that the connection $D_{n}$ has the form 
\[
D_{n}(V_{a}^{n}) = A_{n}(a) - 2\pi i \langle J_{n}, a \rangle,
\]
where $A_{n}$ defines the equivariant structure. Since the $A_{n}$-term defines a torus action, it does not contribute to the monodromy. Hence, the monodromy for the fibre $J_{n}^{-1}(\xi)$ is generated by $\exp((2\pi i)^2 \langle \xi, a \rangle)$, for $a \in \Lambda$, and therefore $J_{n}^{-1}(\xi)$ is Bohr-Sommerfeld if and only if $\xi \in \frac{1}{2\pi i} \Lambda^{*}$.

\end{proof}

The equivariant structure on the line bundle $L$ induces the structure of a complex $\mathbb{T}_{\mathbb{C}}$ representation on the space of global holomorphic sections $H^{0}(M, L^{\otimes n})$, and a result of Atiyah and Danilov \cite{danilov1978geometry, guillemin2002moment} states that its weight space decomposition is indexed by the integral points in the moment polytope:
\[
H^{0}(M, L^{\otimes n}) \cong  \bigoplus_{ \lambda \in \frac{1}{2\pi}\Lambda^{*} \cap n \Delta } \mathbb{C} \lambda.
\]
Combined with Proposition \ref{BSdetermination}, this implies that we have the desired isomorphism $\Hom{ \mathcal{L}_{n}, \mathcal{B}^{cc}_{n}} \cong H^{0}(M, L^{\otimes n})$. 

There is in fact a canonical isomorphism, which we can describe explicitly. Let $M^{o} \subset M$ denote the dense open subset where the $\mathbb{T}_{\mathbb{C}}$ action is free and transitive. The proof of Proposition \ref{BSdetermination} implies that the map $\pi: (J_{n}|_{M^{o}})^{-1}(\xi) \to M^{o}$ is an isomorphism, which can be inverted to give an equivariant section $s_{\xi} : M^{o} \to Z_{n}$. By Proposition \ref{liftedequivariantaction}, there is a canonical $\mathbb{T}_{\mathbb{C}}$-equivariant isomorphism $s_{\xi}^{*}U_{n} \cong L^{\otimes n}|_{M^{o}}$. When $s_{\xi}$ is Bohr-Sommerfeld, this induces an injective map
\[
H^{0}(J_{n}^{-1}(\xi), U_{n})^{D_{n}} \to H^{0}(M^{o}, L^{\otimes n}). 
\]
By Proposition \ref{liftedequivariantaction}, the one-dimensional vector space $H^{0}(J_{n}^{-1}(\xi), U_{n})^{D_{n}}$ maps to the weight $2 \pi i \xi$ subspace of $H^{0}(M^{o}, L^{\otimes n})$. If $\xi \in -i n \Delta$, then the sections in this weight space extend to give holomorphic sections over $M$. In this way we obtain the desired isomorphism. 

\begin{theorem} \label{QuantizationTheorem}
There is a canonical $\mathbb{T}_{\mathbb{C}}$-equivariant isomorphism 
\[
\Hom{\mathcal{L}_{n}, \mathcal{B}^{cc}_{n}} \cong H^{0}(M, L^{\otimes n}),
\]
given by mapping a flat section of $U_{n}$ along ${J_{n}^{-1}(\frac{\lambda}{2\pi i})}$, for $\lambda \in \Lambda^{*} \cap 2 \pi n \Delta$, to a weight $\lambda$-section of $H^{0}(M, L^{\otimes n})$ via the isomorphism of Proposition \ref{liftedequivariantaction}.
\end{theorem}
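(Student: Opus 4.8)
The plan is to assemble the two weight-space decompositions already established and verify that the explicit map described just before the statement intertwines them. First I would invoke Proposition~\ref{BSdetermination}, which presents $\Hom{\mathcal{L}_n, \mathcal{B}^{cc}_n}$ as a direct sum of the one-dimensional spaces $H^0(J_n^{-1}(\tfrac{\lambda}{2\pi i}), U_n)^{D_n}$ indexed by $\lambda \in \Lambda^* \cap 2\pi n\Delta$ --- exactly the lattice points for which the fibre over $\xi = \tfrac{\lambda}{2\pi i}$ is simultaneously Bohr--Sommerfeld (so $\xi \in \tfrac{1}{2\pi i}\Lambda^*$) and meets the brane $\mathcal{L}_n$ (so $\xi \in -in\Delta$). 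On the other side, the Atiyah--Danilov theorem expresses $H^0(M, L^{\otimes n})$ as a sum of one-dimensional $\mathbb{T}_{\mathbb{C}}$-weight spaces indexed by $\tfrac{1}{2\pi}\Lambda^* \cap n\Delta$. Since $\lambda \mapsto \tfrac{\lambda}{2\pi}$ is a bijection between these two index sets, the remaining task is to produce a canonical, weight-matching isomorphism summand by summand.

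Next I would make the summand-wise map explicit using Proposition~\ref{liftedequivariantaction}. Over the dense open orbit $M^o$ the projection restricts to an isomorphism of each fibre $(J_n|_{M^o})^{-1}(\xi)$ with $M^o$, producing the equivariant section $s_\xi : M^o \to Z_n$, and the canonical equivariant isomorphism $U_n \cong \pi^*(L^{\otimes n})$ then gives $s_\xi^* U_n \cong L^{\otimes n}|_{M^o}$. A $D_n$-flat section over the Bohr--Sommerfeld fibre thus pulls back under $s_\xi$ to a holomorphic section of $L^{\otimes n}$ over $M^o$; using the form $D_n(V_a^n) = A_n(a) - 2\pi i\langle J_n, a\rangle$ from Proposition~\ref{liftedequivariantaction}, I would check that this section lands in the $\mathbb{T}_{\mathbb{C}}$-weight space of weight $2\pi i\xi = \lambda$, matching the target summand.

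The main obstacle, and the one genuinely geometric point, is to show that each weight-$\lambda$ section, defined a priori only on $M^o \cong \mathbb{T}_{\mathbb{C}}$, extends to a global holomorphic section of $L^{\otimes n}$ over all of $M$. Here I would pass to the linearizing charts $U_p$ around the fixed points, where Proposition~\ref{Hamiltonianlift} and the coordinate formula $(\varphi^n_p)^*(J_n) = i\sum_i z_i p_i \lambda^{(i)} - in\mu(p)$ from the proof of Proposition~\ref{BSdetermination} reduce the question to the growth of a monomial along the toric boundary divisors. The condition $\lambda \in 2\pi n\Delta$ --- equivalently $\tfrac{\lambda}{2\pi}\in n\Delta$, which is precisely the intersection condition $\xi\in -in\Delta$ --- forces the relevant exponents to be nonnegative, so the section is bounded and extends across each codimension-one stratum and hence over all of $M$. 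This is where the moment polytope functions as the exact bookkeeping device that makes the two index sets agree not merely in cardinality but as labelled weight decompositions; one can equally view the extension as being subsumed in the Atiyah--Danilov characterization itself.

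Finally, since both sides are finite direct sums of one-dimensional weight spaces over matched index sets and the constructed map is injective and weight-preserving on each summand, it is an isomorphism. Its manifest compatibility with the two $\mathbb{T}_{\mathbb{C}}$-actions --- the equivariant structure $A_n$ on $U_n$ and the Atiyah--Danilov action on $H^0(M,L^{\otimes n})$ --- yields the asserted $\mathbb{T}_{\mathbb{C}}$-equivariance, completing the argument.
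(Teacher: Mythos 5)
Your proof is correct and follows essentially the same route as the paper: Proposition~\ref{BSdetermination} and the Atiyah--Danilov theorem supply the two matching weight decompositions, and the canonical map is constructed exactly as in the text, by pulling back flat sections along the equivariant sections $s_\xi$ over $M^o$ and using $D_n(V_a^n) = A_n(a) - 2\pi i \langle J_n, a\rangle$ to see that they land in the weight $2\pi i\xi = \lambda$ subspace. Your explicit chart-by-chart check that the weight-$\lambda$ section extends across the toric boundary is a correct elaboration of the step the paper subsumes into the Atiyah--Danilov characterization, as you yourself observe.
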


We end this section by commenting on the close relationships between the constructions of this section and geometric quantization. First, let us consider the geometric quantization of the holomorphic symplectic manifold $(Z_{n}, \Omega_{n})$. This is defined as the space of covariantly constant sections of $(U_{n}, D_{n})$ along a foliation by Lagrangian submanifolds. If we first choose the foliation given by the fibres of $\pi: Z_{n} \to M$, then we see, as above, that geometric quantization recovers $H^{0}(M, L^{\otimes n})$. On the other hand, if we use the foliation given by the fibres of $J_{n}$, then the result of geometric quantization consists of the flat sections along Bohr-Sommerfeld fibres. By Proposition \ref{BSdetermination} and the following discussion, we see that this results in the space $H^{0}(M^{o}, L^{\otimes n})$ of \emph{meromorphic} sections of $L^{\otimes n}$. Hence, geometric quantization of $(Z_{n}, \Omega_{n})$ is not invariant under change of polarizations. Strikingly, it is the space of homomorphisms $\Hom{\mathcal{L}_{n}, \mathcal{B}_{n}^{cc}}$, defined by requiring intersection between the leaves of the foliation and the brane $\mathcal{L}_{n}$, which is the invariantly defined space. 

Second, let us consider the geometric quantization of the symplectic manifold $(M, n \omega)$. This is defined by taking the covariantly constant sections of $(L^{\otimes n}, \nabla^{\otimes n})$ along a polarization of $n \omega$. The complex structure $I$ making up the K\"{a}hler manifold defines a \emph{complex} polarization, and the vector space produced by geometric quantization is the space of holomorphic sections $H^{0}(M, L^{\otimes n})$. Since we are on a toric manifold, there is a second \emph{real} polarization which is available to us. It is given by the fibres of the moment map $n \mu: M \to \mathfrak{t}^{*}$. The quantization produced by this polarization consists of the flat sections along the Bohr-Sommerfeld fibres, which are indexed by the lattice points in $n \Delta$. 

In the present section we explained that there is a canonical isomorphism between $\Hom{\mathcal{L}_{n}, \mathcal{B}_{n}^{cc}}$ and the geometric quantization $H^{0}(M, L^{\otimes n})$ associated to the complex polarization. But there is also a canonical isomorphism between $\Hom{\mathcal{L}_{n}, \mathcal{B}_{n}^{cc}}$ and the geometric quantization associated to the real moment map. Indeed, the Bohr-Sommerfeld fibres of $J_{n}$ intersect $\mathcal{L}_{n}$ along the corresponding Bohr-Sommerfeld fibres of $n \mu$. Furthermore, by Proposition \ref{reductionprop}, it is clear that the flat sections of $D_{n}$ restrict to give flat sections of $\nabla^{\otimes n}$. The upshot is that $\Hom{\mathcal{L}_{n}, \mathcal{B}_{n}^{cc}}$ provides an intermediary with which to identify the quantizations associated to the different available polarizations.  

\subsection{The homogeneous coordinate ring} \label{algebraconst}
Given a holomorphic line bundle $L$ prequantizing a compact K\"{a}hler manifold $(M, I, \omega)$, we obtain not only a complex vector space $H^{0}(M, L)$, but also a graded algebra 
\[
\mathcal{A} = \bigoplus_{n \geq 0}H^{0}(M, L^{\otimes n}).
\]
The individual summands may be interpreted as the geometric quantizations of the rescaled symplectic forms $n \omega$. Hence, assuming independence of quantization under changes of polarization, we expect that the dimensions of these vector spaces is invariant under deformations of the complex structure for which $(M,I, \omega)$ remains K\"{a}hler. Instead, it is the product on the algebra which encodes the complex structure on $M$. In this section, we explain how this product arises from an associative operation on the cotangent bundle of $M$. 

\subsubsection{Monoidal structure on the A-model of the cotangent bundle} \label{Monoidalcotangentcat}
The real cotangent bundle of $M$ is a symplectic groupoid. This structure consists in a partial multiplication, defined by the fibrewise addition map
\[
A: T^{*}M \times_{M} T^{*}M \to T^{*}M,
\]
which makes $T^{*}M$ into a Lie groupoid over $M$. Furthermore, the canonical symplectic form $\Im(\Omega_{0})$ is \emph{multiplicative}, meaning that the graph of the multiplication map is a Lagrangian submanifold
\[ 
Gr(A) \subset (T^{*}M, -\Im(\Omega_{0})) \times (T^{*}M, -\Im(\Omega_{0})) \times (T^{*}M,  \Im(\Omega_{0})).
\] 
Another way of formulating the multiplicativity property is via symplectic reduction; the submanifold of composable arrows $T^{*}M \times_{M} T^{*}M \subset (T^{*}M, \Im(\Omega_{0})) \times (T^{*}M, \Im(\Omega_{0}))$ is coisotropic, and the addition map defines an isomorphism to its symplectic reduction. 

It is expected that the multiplication map of a symplectic groupoid induces a monoidal structure on its category of A-branes \cite{pascaleff2018poisson}. In the first place, this means that given a pair of branes $\mathcal{B}_{1}$ and $\mathcal{B}_{2}$, it is possible to `tensor' them to produce a third brane $\mathcal{B}_{1} \ast \mathcal{B}_{2}$. This product can be defined via symplectic reduction. Namely, we take the product $\mathcal{B}_{1} \times \mathcal{B}_{2} \subseteq T^{*} M \times T^{*}M$, intersect it with the coisotropic submanifold of composable arrows $T^{*}M \times_{M} T^{*}M$, and then push the resulting submanifold forward using the addition map to obtain $\mathcal{B}_{1} \ast \mathcal{B}_{2}$. When the branes are prequantized by line bundles $U_{1}$ and $U_{2}$, we obtain a prequantization $U_{1} \ast U_{2}$ of their tensor product. For this, we start by taking the exterior tensor product $U_{1} \boxtimes U_{2}$, which is supported on $\mathcal{B}_{1} \times \mathcal{B}_{2}$. The fibres of the addition map coincide with the isotropic leaves of the characteristic foliation of $T^{*}M \times_{M} T^{*}M$. As a result, the product connection on $U_{1} \boxtimes U_{2}$ is flat along this foliation; we define $U_{1} \ast U_{2}$ by taking covariant constant sections along the leaves.  

A monoidal category includes the data of an identity object, which is a left and right unit for the tensor product. For the category of A-branes on the cotangent bundle, this identity is provided by the zero section $\mathcal{L}$ equipped with the trivial local system. Indeed, $\mathcal{L}$ is a Lagrangian submanifold, and because it consists of the $0$ covectors, there are canonical isomorphisms 
\[
\mathcal{L} \ast \mathcal{B} \cong \mathcal{B} \cong \mathcal{B} \ast \mathcal{L}
\]
for any other brane $\mathcal{B}$.

The conjectural monoidal structure on the category of A-branes also defines a `tensor' product for morphisms, so that given branes $\mathcal{B}_{i}$, for $i = 1, ... , 4$, we get a $\mathbb{C}$-linear `vertical composition' map 
\[
\ast: \Hom{\mathcal{B}_{1}, \mathcal{B}_{2}} \otimes \Hom{\mathcal{B}_{3}, \mathcal{B}_{4}} \to \Hom{\mathcal{B}_{1} \ast \mathcal{B}_{3}, \mathcal{B}_{2} \ast \mathcal{B}_{4}}.  
\]
Since morphisms between coisotropic branes have not been defined mathematically, it is not yet possible to give a general definition for this product. However, for the branes that we will consider, we suggest the following definition.

 Let $\mathcal{L}_{i}$ and $\mathcal{L}_{j}$ be Lagrangian branes with trivial local systems, and let $\mathcal{B}^{cc}_{i}$ and $\mathcal{B}^{cc}_{j}$ be space-filling branes, which are prequantized by line bundles $U_{i}$ and $U_{j}$, respectively. We assume that it is possible to choose holomorphic Lagrangian foliations $\mathcal{F}_{i}, \mathcal{F}_{j}$, and $\mathcal{F}_{i,j}$ for the respective holomorphic symplectic structures $\Omega_{i}, \Omega_{j}$ and $\Omega_{i,j}$ underlying the space-filling branes $\mathcal{B}_{i}^{cc}, \mathcal{B}_{j}^{cc}$ and $\mathcal{B}_{i}^{cc} \ast \mathcal{B}_{j}^{cc}$. We assume furthermore that these foliations are compatible with the monoidal structure on the category in the following sense. Let $\Lambda_{i}$ be a leaf of $\mathcal{F}_{i}$, equipped with the local system $U_{i}|_{\Lambda_{i}}$, and let $\Lambda_{j}$ be a leaf of $\mathcal{F}_{j}$, equipped with the local system $U_{j}|_{\Lambda_{j}}$. These define branes and hence they may be tensored to define a third brane $\Lambda_{i} \ast \Lambda_{j}$, which is prequantized by the line bundle $(U_{i}|_{\Lambda_{i}}) \ast (U_{j}|_{\Lambda_{j}})$. We assume that this is a leaf of the foliation $\mathcal{F}_{i,j}$. Finally, we assume that $\Lambda_{i} \ast \Lambda_{j}$ intersects $\mathcal{L}_{i} \ast \mathcal{L}_{j}$, if $\Lambda_{i}$ and $\Lambda_{j}$ respectively intersect $\mathcal{L}_{i}$ and $\mathcal{L}_{j}$. 
 
Recall from Equation \ref{proposalforquant} that the morphisms in $\Hom{\mathcal{L}_{i}, \mathcal{B}_{i}^{cc}}$ are given by flat sections of $U_{i}$ which are supported on leaves of $\mathcal{F}_{i}$ that intersect $\mathcal{L}_{i}$, and similarly for $\Hom{\mathcal{L}_{j}, \mathcal{B}_{j}^{cc}}$ and $\Hom{\mathcal{L}_{i} \ast \mathcal{L}_{j}, \mathcal{B}_{i}^{cc} \ast \mathcal{B}_{j}}$. Now let $t_{i} \in \Hom{\mathcal{L}_{i}, \mathcal{B}_{i}^{cc}}$, which is a flat section supported on a leaf $\Lambda_{i}$, and let $t_{j} \in \Hom{\mathcal{L}_{j}, \mathcal{B}_{j}^{cc}}$, which is a flat section supported on a leaf $\Lambda_{j}$. By taking their exterior tensor product, we obtain a flat section $t_{i} \ast t_{j}$ of $U_{i} \ast U_{j}$, supported on $\Lambda_{i} \ast \Lambda_{j}$. Therefore, this defines a morphism 
\[
t_{i} \ast t_{j} \in \Hom{\mathcal{L}_{i} \ast \mathcal{L}_{j}, \mathcal{B}_{i}^{cc} \ast \mathcal{B}_{j}}. 
\]
 We take this as our proposal for the tensor product of the morphisms $t_{i}$ and $t_{j}$. 

%
%

\subsubsection{Algebras from monoid objects} \label{Algfrommonoid}
To construct an associative algebra from a monoidal category, we need the data of a monoid object. This is an object $\mathcal{B}$, equipped with a `product' morphism $\mu : \mathcal{B} \ast \mathcal{B} \to \mathcal{B}$ and a `unit' morphism $\eta: \mathcal{L} \to \mathcal{B}$ from the identity object, satisfying the usual associativity and unit equations. Given such an object, we obtain a map 
\[
\Hom{\mathcal{L}, \mathcal{B}} \otimes \Hom{\mathcal{L}, \mathcal{B}} \to \Hom{\mathcal{L} \ast \mathcal{L}, \mathcal{B} \ast \mathcal{B}} \to \Hom{\mathcal{L}, \mathcal{B}},
\]
by composing the tensor product, post-composition with $\mu$, and pre-composition with the canonical isomorphism $\mathcal{L} \ast \mathcal{L} \cong \mathcal{L}$. This defines the structure of an associative algebra on $\Hom{\mathcal{L}, \mathcal{B}}$, with multiplicative identity given by $\eta$.


The homogeneous coordinate ring $\mathcal{A}$ is a $\mathbb{Z} -$\emph{graded} algebra. To construct it from the category of A-branes, we need the data of a $\mathbb{Z} -$\emph{graded} monoid. This consists of a family $\{ \mathcal{B}_{n} \}_{n \in \mathbb{Z}}$ of objects, equipped with `product maps'
\begin{equation} \label{braneproduct}
\mu: \mathcal{B}_{n} \ast \mathcal{B}_{m} \to \mathcal{B}_{n + m},
\end{equation}
and a unit morphism $\eta: \mathcal{L} \to \mathcal{B}_{0}$, all subject to the usual associativity and unit equations. Given this data, the direct sum 
\[
\bigoplus_{n \geq 0} \Hom{\mathcal{L}, \mathcal{B}_{n}}
\]
is equipped with the structure of an associative $\mathbb{Z}-$graded algebra with multiplicative identity given by $\eta$. 

\subsubsection{Constructing the graded homogenous coordinate ring} \label{Congradedhomcoordring}

Let $(M, I, \omega)$ be a prequantized compact connected toric K\"{a}hler structure, as in Section \ref{toricsection}, and let $(\mathcal{L}_{n}, \mathcal{B}_{n}^{cc})_{n \in \mathbb{Z}}$ be the collection of branes constructed in Proposition \ref{reductionprop}. In order to view these as branes in a common real cotangent bundle, we use the isomorphism of Equation \ref{magdef}. Under this isomorphism, the branes $\mathcal{L}_{n}$ are mapped to the zero section equipped with the trivial local system. Therefore they all coincide with the monoidal unit $\mathcal{L}$, and we will use both notations interchangeably. In this section, we will show that the branes $\mathcal{B}_{n}^{cc}$ form a $\mathbb{Z}-$graded monoid object as above, for which the multiplication map $\mu$ of Equation \ref{braneproduct} is an isomorphism. We will then construct a graded algebra following the construction outlined in the previous section, and show that it coincides with the homogeneous coordinate ring $\mathcal{A}$.

The space filling branes $\mathcal{B}^{cc}_{n}$ provide prequantized holomorphic symplectic structures $(Z_{n}, \Omega_{n}, U_{n}, D_{n})$. Their tensor products $\mathcal{B}^{cc}_{n} \ast \mathcal{B}^{cc}_{m}$ correspond to the holomorphic symplectic reductions of the coisotropic submanifolds $Z_{n} \times_{X} Z_{m} \subset (Z_{n}, \Omega_{n}) \times (Z_{m}, \Omega_{m})$. Hence, the collection of maps of Equation \ref{braneproduct} is given by an isomorphism between the symplectic reduction of $Z_{n} \times_{X} Z_{m}$ and $(Z_{n+m}, \Omega_{n + m})$, for each pair $(n, m) \in \mathbb{Z}^2$. Assembling all of these maps together, we obtain the structure of a holomorphic symplectic groupoid on the disjoint union
\[
(\mathcal{Z}, \Omega) := \sqcup_{n \in \mathbb{\mathbb{Z}}} (Z_{n}, \Omega_{n}). 
\]
The associativity equations for the graded monoid are encoded by the associativity of the groupoid product. 

The Lagrangian branes $\mathcal{L}_{n} \subset Z_{n}$ allow us to identify $Z_{n}$ as a brane $\mathcal{B}_{n}^{cc}$ in the cotangent bundle $(T^{*}M, \Im(\Omega_0))$. For the multiplicative structure on $\{{Z}_n\}$ in the groupoid $\mathcal{Z}$ to be compatible with the convolution product of the corresponding branes $\{\mathcal{B}_{n}^{cc}\}$ in $T^*M$,  
we must require that under the groupoid product we have $\mathcal{L}_{n} \ast \mathcal{L}_{m} = \mathcal{L}_{n+m}$.

We can upgrade this to include the prequantizations by giving a flat isomorphism between $U_{n} \boxtimes U_{m}$ and the pullback of $U_{n+m}$ to $Z_{n} \times_{X} Z_{m} $, for each pair $(n, m) \in \mathbb{Z}^2$. This data is in turn equivalent to a multiplicative structure on the corresponding prequantization of $(\mathcal{Z}, \Omega)$
\[
(\mathcal{U}, D) := \sqcup_{n \in \mathbb{\mathbb{Z}}} (U_{n}, D_{n}). 
\]
Recall that a multiplicative structure on a line bundle over a Lie groupoid is defined to be a flat trivialization
\[
\Theta \in H^{0}(\mathcal{Z}^{(2)}, \delta \mathcal{U}),
\]
where $\mathcal{Z}^{(2)}$ is the space of composable arrows, and $\delta \mathcal{U} = p_{1}^{*}\mathcal{U}^{*} \otimes p_{2}^{*}\mathcal{U}^{*} \otimes m^{*}\mathcal{U}$ as a line bundle with connection. In the last expression, $m$ is the product on $\mathcal{Z}$, and $p_{1}, p_{2}$ are the two projection maps. This trivialization is furthermore required to satisfy a cocycle condition on the space $\mathcal{Z}^{(3)}$ of triples of composable arrows. Namely, let $p_{i} : \mathcal{Z}^{(3)} \to \mathcal{Z}^{(2)}$, for $i = 1,...,4$, denote the four maps given by projections and multiplications. Then there is a canonical flat isomorphism 
\[
\delta \delta \mathcal{U} := p_{1}^{*}(\delta \mathcal{U}) \otimes p_{2}^{*}(\delta \mathcal{U}^{*}) \otimes p_{3}^{*}(\delta \mathcal{U}) \otimes p_{4}(\delta \mathcal{U}^{*}) \cong (\mathcal{O}, d). 
\]
The cocycle condition states that 
\[
\delta(\Theta) := p_{1}^{*}\Theta \otimes p_{2}^{*}\Theta^{-1} \otimes p_{3}^{*}\Theta \otimes p_{4}^{*}\Theta^{-1} = 1.
\]
This condition encodes the associativity equations for the graded monoid. We refer to the resulting structure $(\mathcal{Z}, \Omega, \mathcal{U}, D, \Theta)$ as a prequantized symplectic groupoid. 

The unit of the graded monoid is a morphism $\eta \in \Hom{\mathcal{L}, \mathcal{B}_{0}^{cc}}$. The holomorphic symplectic manifold underlying $\mathcal{B}_{0}^{cc}$ is the holomorphic cotangent bundle, which is equipped with the holomorphic moment map $J_{0}$ constructed in Section \ref{toricsection}. Then, as in Equation \ref{proposalforquant}, the morphism $\eta$ is given by a flat section of $(U_{0}, D_{0})$, which is supported on a fibre of $J_{0}$ which intersects $\mathcal{L}_{0}$. In fact, $\mathcal{L}_{0}$ is itself a fibre of $J_{0}$, and Proposition \ref{reductionprop} implies that $(U_{0}, D_{0})|_{\mathcal{L}_{0}} \cong (\mathcal{O}, d)$. Therefore, the flat section $\eta$ may be given by the constant section $1$ of $\mathcal{O}$. The existence of such a unit morphism $\eta$ is actually a direct consequence of the structure of a prequantized symplectic groupoid. 

The upshot of the present discussion is therefore that the structure of a graded monoid for the branes $\{ \mathcal{B}_{n}^{cc} \}_{n \in \mathbb{Z}}$ is equivalent to the structure of a holomorphic symplectic groupoid with prequantizing multiplicative line bundle $(\mathcal{Z}, \Omega, \mathcal{U}, D, \Theta)$, equipped with brane sections $\mathcal{L}_{n} \subset Z_{n}$ satisfying $\mathcal{L}_{n} \ast \mathcal{L}_{m} = \mathcal{L}_{n+m}$ for all $(n,m) \in \mathbb{Z}^2$. We construct this in the following proposition. 

\begin{proposition} \label{multstruct1}
The holomorphic symplectic manifold $(\mathcal{Z}, \Omega)$ is equipped with the structure of a holomorphic symplectic groupoid. With respect to the product on this groupoid, the Lagrangian submanifolds $\mathcal{L}_{n} \subset \mathcal{Z}_{n}$ satisfy
\[
\mathcal{L}_{n} \ast \mathcal{L}_{m} = \mathcal{L}_{n+m}
\]
for all $(n,m) \in \mathbb{Z}$. Furthermore, the prequantum line bundle $(\mathcal{U}, D)$ is equipped with a multiplicative structure $\Theta$. Using the identification $U_{n} \cong \pi^{*}L^{\otimes n}$ of Proposition \ref{reductionprop}, we have an isomorphism 
\[
\delta \mathcal{U} \cong \pi^{*}((L^{*})^{\otimes n} \otimes (L^{*})^{\otimes m} \otimes L^{\otimes (n + m)}),
\]
over $Z_{n} \times_{X} Z_{m}$, where $\pi$ is the projection to $M$. The multiplicative structure $\Theta|_{Z_{n} \times_{M} Z_{m}}$ is given by the canonical trivialisation of the line bundle on the right hand side.

Finally, the Hamiltonian torus action on $\mathcal{Z}$ is by groupoid automorphisms, and the individual moment maps $J_{n}$ assemble into a moment map $J: \mathcal{Z} \to \mathfrak{t}^{*}_{\mathbb{C}}$ defining a groupoid homomorphism, where $\mathfrak{t}^{*}_{\mathbb{C}}$ is equipped with its additive group structure. 
\end{proposition}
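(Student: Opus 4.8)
The plan is to realize the entire structure---the groupoid, its multiplicative prequantization, and the torus symmetry---by holomorphic symplectic reduction of a single manifestly groupoid-theoretic object, namely the cotangent groupoid $T^*P_L \rightrightarrows P_L$ with fibrewise addition. This is a holomorphic symplectic groupoid whose tautological $1$-form $\alpha_0$ is \emph{multiplicative}, in the sense that $m^*\alpha_0 = p_1^*\alpha_0 + p_2^*\alpha_0$ on composable arrows; consequently the prequantization $(\mathcal{O}, d - 2\pi i\alpha_0)$ from Proposition~\ref{reductionprop} carries a tautological multiplicative structure $\Theta_0 = 1$, since $\delta(d - 2\pi i \alpha_0) = d - 2\pi i\,\delta\alpha_0 = d$ is the trivial connection on $\delta\mathcal{O} \cong \mathcal{O}$. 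The $\mathbb{C}^*$-scaling action on $P_L$ lifts by the cotangent lift to $T^*P_L$, where it acts by groupoid automorphisms (cotangent lifts always preserve fibrewise addition) and is Hamiltonian with moment map $\mathcal{E}$; crucially, $\mathcal{E}$ is fibrewise linear, hence additive under composition, i.e. a groupoid homomorphism $T^*P_L \to (\mathbb{C}, +)$. The same $\mathbb{C}^*$-action preserves $(\mathcal{O}, d - 2\pi i\alpha_0, \Theta_0)$.

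The central step---and the main obstacle---is to show that this situation reduces to a \emph{graded} symplectic groupoid. The difficulty is that, precisely because $\mathcal{E}$ is a homomorphism, the individual level sets $\mathcal{E}^{-1}(n/2\pi)$ are \emph{not} subgroupoids: composition adds the value of $\mathcal{E}$. I would therefore treat $\sqcup_n \mathcal{E}^{-1}(n/2\pi)$ as a $\mathbb{Z}$-graded subobject of $T^*P_L$ on which composition raises the grading additively, and define the reduced multiplication $Z_n \times_M Z_m \to Z_{n+m}$ as follows: given classes over a common $x \in M$, lift them to $\mathcal{E}^{-1}(n/2\pi)$ and $\mathcal{E}^{-1}(m/2\pi)$, use the $\mathbb{C}^*$-action (which covers $P_L \to M$) to bring their base points to a common point of $P_L$, compose in the cotangent groupoid, and pass to the $\mathbb{C}^*$-quotient. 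Equivariance of the cotangent composition under $\mathbb{C}^*$ makes this independent of all choices. The remaining verifications---that the product is holomorphic, associative, unital with unit the reduced zero section $0_M \subset Z_0 = T^*M$, and has inverses induced by fibrewise negation $\alpha \mapsto -\alpha$ (which carries $Z_n$ to $Z_{-n}$)---follow because each ingredient is $\mathbb{C}^*$-equivariant and descends. Multiplicativity of $\Omega$ is then automatic: the reduction of the multiplicative form $\Omega_0$ is multiplicative, so the graph of the reduced product is Lagrangian, identifying $(\mathcal{Z}, \Omega)$ as a holomorphic symplectic groupoid.

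For the brane sections, I would observe that the conormal bundle $C_S = N^*S \subset T^*P_L$ of the unit circle bundle $S \subset P_L$ is a subgroupoid: being a union of linear subspaces of the cotangent fibres, it is closed under fibrewise addition, and it is $S^1$-invariant. Two elements of $\mathcal{L}_n$ and $\mathcal{L}_m$ over a common $x \in M$ lift to $C_S$ over points of $S$ lying in a single $S^1$-orbit, so they can be made composable within $C_S$ using only $S^1 \subset \mathbb{C}^*$; their product stays in $C_S$ and descends into $\mathcal{L}_{n+m}$. Hence $\mathcal{L}_n \ast \mathcal{L}_m \subseteq \mathcal{L}_{n+m}$, and since all three are sections of $\pi$ over $M$, equality follows. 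The multiplicative structure $\Theta$ on $(\mathcal{U}, D)$ is obtained by descending $\Theta_0 = 1$, its flatness and cocycle condition descending from the trivial statements upstairs. To put $\Theta$ in the stated explicit form, I would transport it through the canonical isomorphisms $U_n \cong \pi^*L^{\otimes n}$ of Proposition~\ref{reductionprop}, under which $\delta\mathcal{U} \cong \pi^*\big((L^*)^{\otimes n}\otimes (L^*)^{\otimes m}\otimes L^{\otimes(n+m)}\big)$ and $\Theta_0 = 1$ becomes the canonical contraction pairing; this pairing is parallel because $L^*\otimes L \to \mathcal{O}$ is always parallel, consistent with $\delta\Omega = 0$ forcing $\delta D$ to be flat.

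Finally, for the torus symmetry I would run the same reduction argument for the $\mathbb{T}_{\mathbb{C}}$-action: its lift to $P_L$ (from the equivariant structure on $L$) commutes with the $\mathbb{C}^*$-scaling, so its cotangent lift to $T^*P_L$ commutes with the reduced $\mathbb{C}^*$-action and descends to $\mathcal{Z}$, acting by groupoid automorphisms because cotangent lifts preserve composition. Its moment map on $T^*P_L$ is fibrewise linear, hence a groupoid homomorphism to $(\mathfrak{t}^*_{\mathbb{C}}, +)$; after reduction and the constant shift by $\mu$ recorded in Proposition~\ref{Hamiltonianlift}, the descended map restricts to $J_n$ on each $Z_n$, and additivity under the graded product reduces to the fibrewise linearity of the cotangent moment map together with the linear dependence of the shift $-in\mu$ on the grading. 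This yields the groupoid homomorphism $J : \mathcal{Z} \to \mathfrak{t}^*_{\mathbb{C}}$ and completes the proof.
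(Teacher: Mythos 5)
Your proposal is correct and follows essentially the same route as the paper's proof: both realize $(\mathcal{Z}, \Omega, \mathcal{U}, D, \Theta)$ by reducing the cotangent groupoid $T^{*}P_{L}$ (with its multiplicative form $\alpha_{0}$ and tautological multiplicative structure $\Theta_{0} = 1$) along the groupoid homomorphism $\mathcal{E}$ at the discrete subgroup $\tfrac{1}{2\pi}\mathbb{Z}$, obtain the bisections $\mathcal{L}_{n}$ by reducing the subgroupoid $C_{S}$, and descend the $\mathbb{T}_{\mathbb{C}}$-action together with its fibrewise-linear moment map. Your explicit recipe for the graded product and your identification of $\Theta$ with the canonical trivialisation via the sections $s_{n}$ are just an unpacking of that same reduction argument.
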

\begin{proof}
We prove this theorem via symplectic reduction, as in Section \ref{reductionsection}. Let $P_{L}$ be the principal $\mathbb{C}^{*}$-bundle associated to $L$. It's cotangent bundle $(T^{*}P_{L}, \Omega_{0})$ is a holomorphic symplectic groupoid. Furthermore, the prequantization $(\mathcal{O}, d - 2\pi i \alpha_{0})$ is equipped with a natural multiplicative structure. Namely, the form $\alpha_{0}$ is multiplicative, meaning that $m^{*}\alpha_{0} = p_{1}^{*}\alpha_{0} + p_{2}^{*}\alpha_{0}$. Therefore, the induced line bundle with connection on $(T^{*}P_{L})^{(2)}$ is the trivial bundle with trivial flat connection, and there is a multiplicative structure given by $\Theta_{0} = 1$. 

Let $\mathcal{E}$ be the vector field on $P_{L}$ which generates the scaling action. Pairing with this vector field defines the moment map for the Hamiltonian $\mathbb{C}^{*}$-action on $T^{*}P_{L}$. This defines a groupoid homomorphism $\mathcal{E}: T^{*}P_{L} \to \mathbb{C}$, where $\mathbb{C}$ is equipped with it's additive group structure. Furthermore, the $\mathbb{C}^{*}$-action on $T^{*}P_{L}$ is by groupoid automorphisms. Therefore, reducing $T^{*}P_{L}$ at a discrete subgroup of $\mathbb{C}$ produces a holomorphic symplectic groupoid. Hence 
\[
\mathcal{Z} = \mathcal{E}^{-1}(\tfrac{1}{2 \pi} \mathbb{Z})/\mathbb{C}^{*},
\]
is a holomorphic symplectic groupoid. The multiplicative structure $\Theta_{0}$ descends to give a multiplicative structure $\Theta$ on $\mathcal{U}$ as long as it is $\mathbb{C}^{*}$-invariant. But this follows immediately from the fact that $\Theta_{0}$ is flat. 

The submanifolds $\mathcal{L}_{n}$ are obtained by reducing the conormal bundle $C_{S}$. Since $C_{S}$ is a subgroupoid of $T^{*}P_{L}$, the union $\sqcup_{n \in \mathbb{Z}} \mathcal{L}_{n}$ forms a subgroupoid of $\mathcal{Z}$. Hence $\mathcal{L}_{n} \ast \mathcal{L}_{m} = \mathcal{L}_{n+m}$ holds for all $(n,m)$. 

In order to identify $\Theta$, we pullback the isomorphism $U_{n} \cong \pi^{*}L^{\otimes n}$ to $\mathcal{E}^{-1}(\frac{n}{2\pi})$, where it gives an isomorphism $\mathcal{O}|_{\mathcal{E}^{-1}(\frac{n}{2\pi})} \cong \pi^{*}L^{\otimes n}$. This isomorphism is realized by the canonical section $s_{n}$ which sends a point $\alpha \in \mathcal{E}^{-1}(\frac{n}{2\pi})$ lying above $\lambda \in P_{L}$ to the point $\lambda^{\otimes n} \in L^{\otimes n}$. Under this identification, the multiplicative structure $\Theta_{0}$ restricted to $\mathcal{E}^{-1}(\frac{n}{2\pi}) \times_{P_{L}} \mathcal{E}^{-1}(\frac{m}{2\pi})$ is identified with the section $s_{n}^{-1} \otimes s_{m}^{-1} \otimes s_{n+m}$, which is simply the canonical trivialisation. 

Finally, recall that the Hamiltonian $\mathbb{T}_{\mathbb{C}}$-action on the spaces $Z_{n}$ is induced via symplectic reduction from a Hamiltonian action on $T^{*}P_{L}$. It is straightforward to check that this action preserves the groupoid structure, and that the moment map defines a groupoid homomorphism. It then follows by reduction that the same is true for the action on $\mathcal{Z}$. 
\end{proof}

\begin{remark}
Proposition \ref{multstruct1} shows that the groupoid structure on $\mathcal{Z}$ arises via symplectic reduction from the groupoid structure on $T^{*}P_{L}$. This may be interpreted as the statement that the graded monoid $\{\mathcal{B}_{n}^{cc} \}_{n \in \mathbb{Z}}$ arises from a single $\mathbb{C}^{*}$-equivariant monoid space-filling brane $\mathcal{B}^{cc}$ on $T^{*}P_{L}$. Similarly, the family of Lagrangian branes $\{ \mathcal{L}_{n} \}_{n \in \mathbb{Z}}$ arises from a single $S^{1}$-equivariant Lagrangian brane $C_{S}$ on $T^{*}P_{L}$ (this is the conormal bundle from Proposition \ref{reductionprop}). This brane is a comonoid, since there is an isomorphism 
\[
C_{S} \cong C_{S} \ast C_{S}. 
\]
Hence, we also expect to obtain an algebra $\Hom{C_{S}, \mathcal{B}^{cc}}$, which is $\mathbb{Z}-$graded by the equivariant $S^{1}$-structure. 
\end{remark}

In addition to constructing the multiplication maps for the graded monoid $\{ \mathcal{B}_{n}^{cc} \}_{n \in \mathbb{Z}}$, Proposition \ref{multstruct1} also shows that there is a groupoid homormophism $J: \mathcal{Z} \to \mathfrak{t}^{*}_{\mathbb{C}}$ given by the moment maps constructed in Section \ref{toricsection}. As a result, the holomorphic Lagrangian foliation given by the fibres of $J$ is multiplicative. Indeed, given the fibres $J^{-1}_{n}(\xi)$ and $J^{-1}_{m}(\eta)$, for $\xi, \eta \in \mathfrak{t}^{*}_{\mathbb{C}}$, we have the identity 
\[
J^{-1}_{n}(\xi) \ast J^{-1}_{m}(\eta) = J^{-1}_{n + m}(\xi + \eta). 
\]

We can now apply the proposal, outlined above, for taking the tensor product of morphisms. Let $J_{n}^{-1}(\xi)$ and $J_{m}^{-1}(\eta)$ be two Bohr-Sommerfeld fibres which intersect $\mathcal{L}_{n}$ and $\mathcal{L}_{m}$, respectively. 
Let $t_{n , \xi} \in H^{0}(J_{n}^{-1}(\xi), U_{n})^{D_{n}}$ and $t_{m, \eta} \in H^{0}(J_{m}^{-1}(\eta), U_{m})^{D_{m}}$ be flat sections defined on the respective Bohr-Sommerfeld fibres. These define morphisms
\[
t_{n , \xi} \in \Hom{ \mathcal{L}_{n}, \mathcal{B}_{n}^{cc}}, \qquad t_{m , \eta} \in \Hom{\mathcal{L}_{m}, \mathcal{B}_{m}^{cc}}.
\]
Their tensor product $t_{n , \xi} \ast t_{m, \eta} \in \Hom{\mathcal{L}_{n+m}, \mathcal{B}_{n+m}^{cc}}$ is defined to be 
\[
\Theta(t_{n , \xi} \boxtimes t_{m, \eta}) \in H^{0}(J_{n + m}^{-1}(\xi + \eta), U_{n + m})^{D_{n + m}},
\]
which is a flat section supported on the Bohr-Sommerfeld fibre $J^{-1}_{n + m}(\xi + \eta)$. This defines the structure of a graded algebra. The following theorem identifies it with the homogeneous coordinate ring.

\begin{theorem}
The additive structure on the real cotangent bundle $(T^{*}M, \Im(\Omega_{0}))$ induces the structure of an algebra on 
\[
 \bigoplus_{n \geq 0} \Hom{\mathcal{L}_{n}, \mathcal{B}^{cc}_{n}}.
\]
Using the identification from Theorem \ref{QuantizationTheorem}, this algebra is canonically isomorphic to the homogeneous coordinate ring
\[
\mathcal{A} = \bigoplus_{n \geq 0}H^{0}(M, L^{\otimes n}).
\]
\end{theorem}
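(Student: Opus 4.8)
The plan is to assemble the two ingredients already in place: the degree-wise isomorphisms $\Hom{\mathcal{L}_n, \mathcal{B}^{cc}_n} \cong H^0(M, L^{\otimes n})$ of Theorem \ref{QuantizationTheorem}, and the graded monoid structure on $\{\mathcal{B}^{cc}_n\}_{n \in \mathbb{Z}}$ supplied by the prequantized holomorphic symplectic groupoid $(\mathcal{Z}, \Omega, \mathcal{U}, D, \Theta)$ of Proposition \ref{multstruct1}. First I would confirm that the tensor product of morphisms (the groupoid convolution induced by fibrewise addition on $T^*M$) genuinely equips $\bigoplus_{n \geq 0} \Hom{\mathcal{L}_n, \mathcal{B}^{cc}_n}$ with an associative, unital, graded algebra; then I would transport this product through the isomorphisms of Theorem \ref{QuantizationTheorem} and verify it agrees with the tensor-product multiplication on $\mathcal{A}$.

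For the first step, recall from Equation \ref{proposalforquant} that $\Hom{\mathcal{L}_n, \mathcal{B}^{cc}_n}$ has a basis of flat sections $t_{n,\xi}$ supported on the Bohr-Sommerfeld fibres $J_n^{-1}(\xi)$ with $\xi = \frac{\lambda}{2\pi i}$, $\lambda \in \Lambda^* \cap 2\pi n \Delta$. By Proposition \ref{multstruct1} the moment map $J$ is a groupoid homomorphism, so $J_n^{-1}(\xi) \ast J_m^{-1}(\eta) = J_{n+m}^{-1}(\xi + \eta)$, and the product $\Theta(t_{n,\xi} \boxtimes t_{m,\eta})$ is a flat section on $J_{n+m}^{-1}(\xi + \eta)$. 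I must check this lands in the span of admissible basis elements, i.e. that $\lambda + \mu \in \Lambda^* \cap 2\pi(n+m)\Delta$. The lattice membership is immediate, and the polytope membership follows from the convexity of $\Delta$, since $n\Delta + m\Delta \subseteq (n+m)\Delta$. Associativity is then a direct consequence of the cocycle condition $\delta(\Theta) = 1$ satisfied by the multiplicative structure, and the unit is the morphism $\eta$ represented by the constant section $1$ on the identity fibre $\mathcal{L}_0 = J_0^{-1}(0)$.

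For the second step, I would use the explicit forms of both the isomorphism of Theorem \ref{QuantizationTheorem} and the multiplicative structure $\Theta$. Under the identification $U_n \cong \pi^* L^{\otimes n}$ of Proposition \ref{reductionprop} and the equivariant section $s_\xi : M^o \to Z_n$ (cf.\ Proposition \ref{liftedequivariantaction}), the flat section $t_{n,\xi}$ corresponds to a weight-$\lambda$ meromorphic section of $L^{\otimes n}$ over the open orbit $M^o$, which extends over $M$ precisely when $\lambda \in 2\pi n\Delta$. By Proposition \ref{multstruct1}, $\Theta$ restricted to $Z_n \times_M Z_m$ is the canonical trivialization of $\pi^*((L^*)^{\otimes n} \otimes (L^*)^{\otimes m} \otimes L^{\otimes(n+m)})$; consequently $\Theta(t_{n,\xi} \boxtimes t_{m,\eta})$ is carried to the honest tensor product of the corresponding sections of $L^{\otimes n}$ and $L^{\otimes m}$. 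Since the multiplication on $\mathcal{A}$ is exactly this tensor product, adding the weights $\lambda$ and $\mu$, the two algebra structures coincide and the degree-wise isomorphisms assemble into a ring isomorphism.

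The main obstacle, and the one place where the toric geometry does real work, is the closure property in the first step: the fibres of the polarization $J_n$ meet only the open orbit $M^o$, so a priori the product of two morphisms is merely a meromorphic section, and the statement that the product of two sections extending to $M$ again extends to $M$ is exactly the convexity relation $n\Delta + m\Delta \subseteq (n+m)\Delta$. A secondary technical point is to normalize the flat sections $t_{n,\xi}$ so that they multiply exactly, with no spurious scalars; this is what upgrades the degree-wise vector space isomorphisms of Theorem \ref{QuantizationTheorem} to a canonical isomorphism of graded algebras, and it follows from the fact that over $M^o$ all sections are torus characters and $\Theta$ is the canonical trivialization.
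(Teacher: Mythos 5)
Your proposal is correct and follows essentially the same route as the paper: the paper's proof likewise reduces to the degree-wise isomorphisms of Theorem \ref{QuantizationTheorem} realized by the equivariant sections $s_{n,\xi}$, and then uses the identification of $\Theta$ with the canonical trivialization (Proposition \ref{multstruct1}) to conclude $t_{n,\xi} \ast t_{m,\eta} = \Theta(t_{n,\xi} \boxtimes t_{m,\eta}) = t_{n,\xi} \otimes t_{m,\eta}$. Your first step, in particular the closure check $\lambda + \mu \in \Lambda^{*} \cap 2\pi(n+m)\Delta$ via convexity of $\Delta$, is a worthwhile explicit verification of a point the paper leaves implicit in its definition of the graded product, but it is not a different approach.
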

\begin{proof}
By Theorem \ref{QuantizationTheorem}, we have canonical isomorphisms $\Hom{\mathcal{L}_{n}, \mathcal{B}^{cc}_{n}} \cong H^{0}(M, L^{\otimes n})$. Hence, we only need to show that under these isomorphisms the tensor product $\ast$ defined above agrees with the tensor product of sections defining the algebra $\mathcal{A}$. 

Recall from the proof of Theorem \ref{QuantizationTheorem} the equivariant sections $s_{n , \xi} : M^{o} \to Z_{n}$ which map into  $J_{n}^{-1}(\xi)$, for $\xi \in (-i n \Delta) \cap (\frac{1}{2\pi i} \Lambda^{*})$. The isomorphism of Theorem \ref{QuantizationTheorem} is constructed by pulling back a flat section $t_{n , \xi} \in H^{0}(J_{n}^{-1}(\xi), U_{n})^{D_{n}}$ along the map $s_{n, \xi}$, and then using the isomorphism of Proposition \ref{liftedequivariantaction} to identify it as a section of $L^{\otimes n}$. Proposition \ref{multstruct1} implies that under this isomorphism, the multiplicative structure $\Theta$ is identified with the tensor product of line bundles. Therefore, for morphisms $t_{n , \xi} $ and $t_{m, \eta}$, we obtain the identification
\[
t_{n , \xi} \ast t_{m, \eta}  = \Theta(t_{n , \xi} \boxtimes t_{m, \eta}) = t_{n , \xi} \otimes t_{m, \eta}.
\]
\end{proof}

\section{Noncommutative deformation via brane quantization}\label{ncsection}
Let $(M, I, \omega, L)$ be a prequantized K\"{a}hler manifold, with homogeneous coordinate ring $\mathcal{A} =  \bigoplus_{n \geq 0}H^{0}(M, L^{\otimes n})$. In the usual approach, in contrast to the approach of the previous sections, the complex polarization is used to view the prequantum line bundles $L^{\otimes n}$ as objects in the category of vector bundles over the complex manifold $(M, I)$. In other words, the line bundles are viewed as B-branes. From this point of view, the graded components of the homogeneous coordinate ring are given by spaces of homomorphisms 
\[
H^{0}(M, L^{\otimes n}) = \Hom{\mathcal{O}, L^{\otimes n}}. 
\]
To multiply elements $a \in \Hom{\mathcal{O}, L^{\otimes n}}$ and $b \in \Hom{\mathcal{O}, L^{\otimes m}}$, we take their composition 
\[
a \circ b \in \Hom{\mathcal{O}, L^{\otimes (n +m)}},
\]
viewing $a$ as an element in $\Hom{L^{\otimes m}, L^{\otimes (n+m)}}$ by using the tensor product with $L^{\otimes m}$. Therefore, the results of the previous sections may be understood as giving a reinterpretation of a standard construction in the B-model of a complex manifold in terms of a construction in the A-model of its cotangent bundle. More precisely, the space of morphisms $\Hom{L^{\otimes n}, L^{\otimes m}}$ is identified with a space of morphisms between A-branes $\Hom{\mathcal{L}, \mathcal{B}_{m-n}^{cc}}$, and the composition of morphisms is identified with the `vertical composition' defined by the monoidal structure on the A-branes. 

In this section, we will deform the homogeneous coordinate ring by viewing the complex polarization $I$ as a generalized complex (GC) structure, which can be deformed by turning on a holomorphic Poisson structure $\sigma$ in the following way:
\begin{equation}\label{hbarfamilyj}
\JJ_\hbar = \begin{pmatrix}-I&Q_\hbar \\0&I^*\end{pmatrix},
\end{equation}
where $Q_\hbar =-4\Im(\hbar\sigma)$.
Just as in the case of a complex manifold, where the homogeneous coordinate ring captures the data of the polarization, the resulting non-commutative algebra $\mathcal{A}_{\hbar}$ will quantize the holomorphic Poisson structure $\sigma$. 

In order to implement this procedure, we need to deform the line bundles $L^{\otimes n}$ into branes for the family of GC structures $\mathbb{J}_{\hbar}$, and furthermore carry out computations involving homomorphisms between branes. In other words, we need to be able to work in a category of branes for $\mathbb{J}_{\hbar}$.

\subsection{Morphisms between generalized complex branes} \label{proposal}

A generalized complex (GC) structure $\mathbb{J}$ is expected to have a category of branes, simultaneously generalizing the category of A-branes of a symplectic manifold and the category of B-branes (vector bundles/coherent sheaves) of a complex manifold. The objects of this category were defined by the second author~\cite{MR2811595}, but the definition of morphisms remains incomplete at the time of writing. Inspired by the relationship mentioned above between the B-model of $(M,I)$ and the A-model of its cotangent bundle, we propose to define this category by making use of a monoidal category of A-branes which is naturally associated to a generalized complex structure. Namely, a GC structure $\mathbb{J}$ has an underlying Poisson structure $Q$, which we assume is integrable to a symplectic groupoid $(\mathcal{G}, \omega)$ (global integrations may be obstructed \cite{crainic2003integrability}, but integrations to local symplectic groupoids are always guaranteed to exist \cite{coste1987groupoides,MR2900786,cabrera2018local}). The product on this groupoid is expected to endow its category of A-branes with a monoidal structure, with identity object given by the Lagrangian submanifold $\mathcal{L}$ of identity arrows \cite{pascaleff2018poisson}. The GC structure $\mathbb{J}$ then endows the anchor map $(t,s): \mathcal{G} \to M \times M$ with the structure of a generalized holomorphic map (in general we must also modify the symplectic groupoid using a B-field, but this is not required in our situation) \cite{crainic2011}. As a result, given a pair of GC branes $B_{1}$ and $B_{2}$, we may construct an A-brane $\mathcal{B}_{2,1}$ in the groupoid whose support consists in the arrows with source $B_{1}$ and target $B_{2}$:
\[
\mathcal{B}_{2,1} = t^{-1}(B_{2}) \cap s^{-1}(B_{1}). 
\]
Our proposal for the morphisms is given by 
\[
\Hom[\mathbb{J}]{B_{1}, B_{2}} := \Hom[\mathcal{G}]{\mathcal{L}, \mathcal{B}_{2,1}},
\]
with composition provided by the vertical composition of the monoidal category. \emph{In this way, we reduce the problem of constructing the category of generalized complex branes to the problem of computing homomorphisms from Lagrangian to coisotropic A-branes in the symplectic groupoid.}

We can construct a $\mathbb{Z}$-graded algebra from the data of a GC brane $B_{0}$ and an auto-equivalence $T$ of the category. Namely, by successively applying the functor to $B_{0}$ we obtain a sequence of branes $B_{n} = T^{n}(B_{0})$. The graded vector space underlying the algebra is defined to be 
\[
\mathcal{A} := \bigoplus_{n \geq 0} \Hom[\mathbb{J}]{B_{0}, B_{n}} = \bigoplus_{n \geq 0} \Hom[\mathcal{G}]{\mathcal{L}, \mathcal{B}_{n,0}},
\]
and the product is given by composition, using the functor $T^{m}$ to identify $\Hom[\mathbb{J}]{B_{0},B_{n}} \cong \Hom[\mathbb{J}]{B_{m}, B_{m+n}}$.

\subsection{R-matrices and toric Poisson structures} \label{Rmatrix}
Let $(M, I)$ be a toric manifold, and denote the infinitesimal torus action by $V: \mathfrak{t}_{\mathbb{C}} \to H^{0}(M, \mathcal{T}_{M})$, using the notation of Section \ref{toricsection}. A \emph{toric Poisson structure} $\sigma \in H^{0}(M, \wedge^{2} \mathcal{T}_{M})$ is a holomorphic Poisson structure on $M$ which is invariant under the torus action, meaning that $L_{V_{a}}(\sigma) = 0$ for all $a \in \mathfrak{t}_{\mathbb{C}} $. Such Poisson structures are always defined by triangular R-matrices for $\mathfrak{t}_{\mathbb{C}}$. 

A triangular R-matrix for $\mathfrak{t}_{\mathbb{C}}$ is defined to be an element $C \in \wedge^{2}\mathfrak{t}_{\mathbb{C}}$. Given such an R-matrix, we define a toric Poisson structure by pushing $C$ forward using the action map 
\[
\sigma_{C} := \wedge^2V(C) \in H^{0}(M, \wedge^{2} \mathcal{T}_{M}).
\]
Choose a basis $\{e_{i} \}$ of $\mathfrak{t}_{\mathbb{C}}$, as in Section \ref{toricsection}. Then the R-matrix may be expressed as 
\[
C = \tfrac{1}{2}\sum_{i,j} C_{ij} e_{i} \wedge e_{j},
\]
and the Poisson structure as
\begin{equation} \label{holPoisson}
\sigma_{C} = \tfrac{1}{2}\sum_{i,j} C_{ij} V_{i} \wedge V_{j}.
\end{equation}
This Poisson structure has the following decomposition into real and imaginary parts 
\[
\sigma_{C} = -\tfrac{1}{4}(IQ + iQ),
\]
where $Q$ is a real Poisson structure. By decomposing the R-matrix into real and imaginary parts $C = A + iB$, we have the following explicit form for $Q$
\[
Q = \tfrac{1}{2}\sum_{ij}(B_{ij}X_{i} \wedge X_{j} - 2A_{ij}IX_{i} \wedge X_{j} - B_{ij}IX_{i} \wedge IX_{j} ),
\]
where $X_{i}$ are the infinitesimal generators of the $\mathbb{T}$-action, defined in Section \ref{toricsection}.

The holomorphic Poisson structure $\sigma_{C}$ defines a family of generalized complex structures $\JJ_\hbar$, as in~\eqref{hbarfamilyj}.
 We may absorb the factor of $\hbar$ into the R-matrix $C$. Hence, in the remainder of this paper we will simply set $\hbar = 1$ and denote the corresponding GC structure by  
\begin{equation*}
\JJ_C = \begin{pmatrix}-I&Q \\0&I^*\end{pmatrix}.
\end{equation*}
\subsection{Branes on toric Poisson varieties} \label{branessection}
A prequantum line bundle $(L, h, \nabla)$ on a toric K\"{a}hler manifold defines both a sequence of B-branes for $(M, I)$ as well as an autoequivalence of the B-model category. The sequence of branes is given by the tensor powers $(L^{\otimes n}, h^{n}, \nabla^{\otimes n})$ and the autoequivalence is given by the operation of tensoring with $L$. In this section, we explain how these are deformed when we turn on the toric Poisson structure $\sigma_{C}$. 

To begin, we recall the definition of a space filling brane from \cite{MR2811595, gualtieri2010branes}. 
\begin{definition}
A space filling brane of $\mathbb{J}_{C}$ is given by a unitary line bundle with connection $(L, h, \nabla)$, whose curvature $-2\pi i F$ satisfies the following equation 
\begin{equation} \label{starequation}
FI + I^{*}F + FQF = 0. 
\end{equation}

\end{definition}

When $Q = 0$, this condition says that $F$ is a closed form of type $(1,1)$, and hence that $(L, \nabla^{0,1})$ defines a holomorphic line bundle. In the general case, this equation implies that $J = I + QF$ is a new complex structure with respect to which $ -\frac{1}{4}(JQ + iQ)$ is a holomorphic Poisson structure. Note that Equation \ref{starequation} is always satisfied by $F = 0$. This is the curvature of the canonical coisotropic brane $B_{0}$, which is defined by the trivial flat Hermitian bundle $(\mathcal{O}, h = 1, \nabla = d)$. 

\subsubsection{R-matrix deformation of space-filling branes}
Let $(M, I, \omega, (L, h, \nabla))$ be a prequantized toric K\"{a}hler structure, whose line bundle is equipped with a $\mathbb{T}_{\mathbb{C}}$-equivariant structure, as in Section \ref{toricsection}. The operation of tensor product with $(L, h, \nabla)$ defines an autoequivalence of the category of branes for $I$. Following \cite{gualtieri2010branes}, we now explain how the $R$-matrix $C$ may be used to deform this into an autoequivalence of the category of GC branes for $\mathbb{J}_{C}$. 

\begin{enumerate}
\item Let $\hat{V} : \mathfrak{t}_{\mathbb{C}} \to H^{0}(tot(L), \mathcal{T}_{tot(L)})$ denote the lift of the infinitesimal torus action which is induced by the equivariant structure of the prequantum line bundle. The R-matrix $C$ then defines a holomorphic Poisson structure $\hat{\sigma}_{C} = \wedge^2 \hat{V}(C)$ on the total space of $L$ which is invariant under $\mathbb{T}_{\mathbb{C}} \times \mathbb{C}^{*}$ and which reduces via the $\mathbb{C}^{*}$-action to $\sigma_{C}$ on $M$. In other words, $(L, \hat{\sigma}_{C})$ defines a \emph{Poisson module} for $\sigma_{C}$.

\item Combining the data of the Poisson module with the metric $h$, we obtain an \emph{infinitesimal Courant symmetry} of $\mathbb{J}_{C}$ in the following way. First, viewing $h$ as a function on $L$, we obtain the K\"{a}hler form as $p^{*}\omega = \frac{1}{2 \pi i } \partial \overline{\partial} \log h$, where $p: L \to M$. Second, taking the Hamiltonian vector field of $\log h$ with respect to the real Poisson structure $\hat{Q} = -4 \Im(\hat{\sigma}_{C}) $, we obtain 
\begin{equation}\label{hatw}
\hat{W} := \hat{Q}(\tfrac{1}{4\pi} d\log h) =- \tfrac{1}{\pi} \Im(\hat{\sigma}_{C}(\partial \log h)) = 2 \Im(\hat{V} C p^{*}\mu),
\end{equation}
which is $\mathbb{C}^{*}$-invariant and covers the vector field 
\[
W = 2 \Im(VC\mu) = -\sum_{ij}\mu_{i}(B_{ij}IX_{j} + A_{ij}X_{j})
\]
on $M$. Because $W$ is obtained from a $\hat{Q}$-Hamiltonian vector field, it is Poisson with respect to $Q$. Furthermore, it satisfies the following equation 
\[
\mathcal{L}_{W}(I) = 4 \Re( VC \overline{\partial}\mu) = -4 \Im( \sigma_{C} \omega) = Q\omega. 
\]
The pair $(W, \omega)$ defines an infinitesimal Courant symmetry in the following sense: the operator 
\[
\begin{pmatrix} \mathcal{L}_{W} & 0 \\ \omega & \mathcal{L}_{W}  \end{pmatrix}
\]
which encodes the Lie derivative action of $W$ and the B-field action of $\omega$ on sections of $TM \oplus T^{*}M$ commutes with $\mathbb{J}_{C}$.

\item The infinitesimal symmetry $(W, \omega)$ may be integrated to a \emph{global Courant symmetry} of $\mathbb{J}_{C}$. First, the flow of the vector field $W$ defines a $1$-parameter family of diffeomorphisms $\varphi_{t}$. Second, by averaging the K\"{a}hler form over the flow of $W$, we obtain the following family of $B$-field gauge symmetries:
\[
F_{t} = \int_{0}^{t} (\varphi_{s}^{*} \omega) ds. 
\]
The data $(\varphi_{t}, F_{t})$ defines a $1$-parameter family of global Courant symmetries, which acts on solutions to Equation \ref{starequation} by sending a closed $2$-form $F$ to 
\[
F_{t} + \varphi_{t}^{*}(F). 
\]
In particular, by applying this to $F=0$, we see that $F_{t}$ solves Equation \ref{starequation} for all $t$. 

\item The global Courant symmetry at $t = 1$ admits a prequantization. Namely, the form $- 2 \pi i F_{1}$ is the curvature of the following averaged connection on $L$:
\[
\overline{ \nabla} = \int_{0}^{1} (\varphi_{t}^{*}  \nabla) dt = \nabla - 2\pi i  \int_{0}^{1} \iota_{W}F_{t} dt . 
\]
We denote by $T_{C}$ the action of $(\varphi_{1}, L, h, \overline{\nabla})$ on branes for $\mathbb{J}_{C}$, which sends a brane $B$ to 
\begin{equation} \label{tensoraction}
T_{C}(B) := (L, h, \overline{ \nabla}) \otimes \varphi_{1}^{*}(B). 
\end{equation}
This is expected to extend to an auto-equivalence of the category of branes for $\mathbb{J}_{C}$, which deforms the equivalence given by tensoring by $L$. 
\end{enumerate}

By successively applying $T_{C}$ to the canonical coisotropic $B_{0}$, we obtain the following sequence of branes 
\[
B_{n} \coloneqq T^{n}_{C}(B_{0}) = (L^{\otimes n}, h^{n}, \overline{ \nabla^{\otimes n}}),
\]
where 
\[
\overline{ \nabla^{\otimes n}} = \frac{1}{n} \int_{0}^{n} (\varphi_{t}^{*}  \nabla^{\otimes n}) dt = \nabla^{\otimes n} - 2\pi i  \int_{0}^{n} \iota_{W}F_{t} dt .
\]
These branes deform the sequence of prequantum line bundles $(L^{\otimes n}, h^{n}, \nabla^{\otimes n})$. Note also that 
\begin{equation} \label{newcomplexstructures}
I_{n} := \varphi_{n}^{*}(I) = I + QF_{n}
\end{equation}
defines a new complex structure, with respect to which $\sigma_{n} =  -\frac{1}{4}(I_{n}Q + iQ)$ is holomorphic Poisson. 

\subsection{The symplectic groupoid and its monoidal category of branes} \label{XuConstruction}
In this section we construct the symplectic groupoid of $\sigma_{C}$, which will later be used to compute morphisms between the branes constructed in Section \ref{branessection}. We follow the approach developed in the work of Xu \cite{xu1993poisson}. 

The starting point is the observation that $\sigma_{C}$ may be obtained by reduction starting from a Poisson structure on the product $\mathbb{T}_{\mathbb{C}} \times M$. This Poisson structure is given by the product $C^{R} \times 0$, where $C^{R}$ is the right-invariant holomorphic Poisson structure on $\mathbb{T}_{\mathbb{C}}$ induced by the R-matrix $C$, and $0$ is the trivial Poisson structure on $M$. In terms of coordinates $(w_{i})$ defined by the isomorphism $\mathbb{T}_{\mathbb{C}} \cong (\mathbb{C}^{*})^{n}$, the Poisson structure $C^{R}$ is given by the following expression
\[
C^{R} = \tfrac{1}{2}\sum_{i,j} C_{ij} w_{i} \partial_{w_{i}} \wedge w_{j} \partial_{w_{j}}.
\]
The Poisson structure $C^{R} \times 0$ is invariant under the $\mathbb{T}_{\mathbb{C}} \times \mathbb{T}_{\mathbb{C}}$ action on $\mathbb{T}_{\mathbb{C}} \times M$. Hence, it is also invariant under the anti-diagonal $\mathbb{T}_{\mathbb{C}}$ action given by $u \ast (w,m) = (wu^{-1}, um)$. As a result, the quotient $(\mathbb{T}_{\mathbb{C}} \times M)/\mathbb{T}_{\mathbb{C}}$ inherits a Poisson structure, and it is straightforward to check that it is isomorphic to $(M, \sigma_{C})$. Furthermore, this Poisson structure is invariant under the residual $\mathbb{T}_{\mathbb{C}}$ action. 

Xu's method now proceeds by constructing the symplectic groupoid of $\sigma_{C}$ via symplectic reduction, starting from the symplectic groupoid of $C^{R} \times 0$. More precisely, the symplectic groupoid of $C^{R} \times 0$ is given by the product of the symplectic groupoids $\mathcal{G}_{C}$ and $T^{*}M$ of $C^{R}$ and $0$, respectively. The Poisson action of $\mathbb{T}_{\mathbb{C}} \times \mathbb{T}_{\mathbb{C}}$ then lifts to a Hamiltonian action on $\mathcal{G}_{C} \times T^{*}M$, with moment map a groupoid homomorphism to $\mathfrak{t}^{*}_{\mathbb{C}} \times \mathfrak{t}^{*}_{\mathbb{C}}$. Taking the symplectic reduction with respect to the anti-diagonal action then yields the symplectic groupoid of $\sigma_{C}$, equipped with a residual Hamiltonian $\mathbb{T}_{\mathbb{C}}$ action:
\[
\mathcal{G}(\sigma_{C}) = (\mathcal{G}_{C} \times T^{*}M)/\!/_{0} \mathbb{T}_{\mathbb{C}}.
\]

In order to explicitly construct the symplectic groupoid of $\sigma_{C}$, we therefore need the symplectic groupoids of $(\mathbb{T}_{\mathbb{C}}, C^{R})$ and $(M, 0)$. The groupoid of $(M, 0)$ is the holomorphic cotangent bundle $T^{*}M$, with Hamiltonian $\mathbb{T}_{\mathbb{C}} $-action constructed in Section \ref{toricsection}. The symplectic groupoid of $(\mathbb{T}_{\mathbb{C}}, C^{R})$ may be constructed via the spray construction \cite{MR2900786,cabrera2018local}. It has been described explicitly in the work of Weinstein and Xu. 
\begin{theorem}\cite{xu1993poisson, weinstein1991symplectic} \label{WeinsteinXutorusgroupoid}
The holomorphic symplectic groupoid $\mathcal{G}_{C}$ integrating the holomorphic Poisson manifold $(\mathbb{T}_{\mathbb{C}}, C^{R})$ is given by the cotangent bundle $(T^{*}(\mathbb{T}_{\mathbb{C}}), \Omega_{0})$, where $\Omega_{0}$ is defined so that its imaginary part is the canonical symplectic form. Using the trivialization $T^{*}(\mathbb{T}_{\mathbb{C}}) \cong \mathfrak{t}_{\mathbb{C}}^{*} \times \mathbb{T}_{\mathbb{C}}$, the target and source maps are given, respectively, by 
\[
t(\alpha, w) = e^{\frac{i}{2}C(\alpha)} w, \qquad s(\alpha, w) = e^{-\frac{i}{2}C(\alpha)} w,
\]
and the multiplication is given by 
\begin{equation*}
m((\beta, e^{\frac{i}{2}C(\beta)}w), (\alpha, e^{-\frac{i}{2}C(\alpha)} w)) = (\beta + \alpha, e^{\frac{i}{2}C(\beta - \alpha)} w).
\end{equation*}
The multiplicative action of $\mathbb{T}_{\mathbb{C}}$ on the second factor of $\mathfrak{t}_{\mathbb{C}}^{*} \times \mathbb{T}_{\mathbb{C}}$ defines a holomorphic Hamiltonian action by groupoid isomorphisms, with moment map given by $J_{0}(\alpha, w) = i \alpha$, which is a groupoid homomorphism.

Finally, there is a (trivially $\mathbb{T}_{\mathbb{C}}$-equivariant) multiplicative prequantization given by 
\[(\mathcal{O}_{T^{*}(\mathbb{T}_{\mathbb{C}})}, d - 2 \pi i \alpha_{0}, \theta_{0}),\]
where $\alpha_{0}$ is the canonical primitive of $\Omega_{0}$, and $\theta_{0}$ is the multiplicative cocycle defined on the space of composable pairs of arrows by the formula 
\[
\theta_{0}((\beta, w_{1}), (\alpha, w_{2})) = e^{-i \pi C(\beta, \alpha)}. 
\]
\end{theorem}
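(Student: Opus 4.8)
The plan is to exploit the fact that $C^R$ is translation-invariant, hence becomes a \emph{constant} bivector in logarithmic coordinates: writing $w_i = e^{u_i}$ so that $w_i\partial_{w_i} = \partial_{u_i}$, the Poisson structure reads $C^R = \tfrac12\sum_{ij}C_{ij}\,\partial_{u_i}\wedge\partial_{u_j}$ on the universal cover $\mathfrak{t}_{\mathbb{C}}\to\mathbb{T}_{\mathbb{C}}$. For a constant Poisson structure on a vector space the cotangent Lie algebroid is the action algebroid of $\mathfrak{t}_{\mathbb{C}}^*$ acting by the constant vector fields $\xi\mapsto C(\xi)$, so its source-simply-connected integration is the action groupoid $\mathfrak{t}_{\mathbb{C}}^*\ltimes\mathfrak{t}_{\mathbb{C}}$, which I would symmetrize as $s(\alpha,u)=u-\tfrac i2 C(\alpha)$, $t(\alpha,u)=u+\tfrac i2C(\alpha)$ with additive composition in $\alpha$ (the factor $i$ being forced by the convention that $\Im\Omega_0$ be the canonical form). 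Descending along the lattice quotient $\mathbb{T}_{\mathbb{C}}=\mathfrak{t}_{\mathbb{C}}/2\pi i\Lambda$, which acts on the base alone, exponentiates $u\mapsto w$ and turns the affine translations into the claimed multiplications by $e^{\pm\frac i2 C(\alpha)}$; the source fibres $\{e^{-\frac i2C(\alpha)}w=\text{const}\}\cong\mathfrak{t}_{\mathbb{C}}^*$ are contractible, so $T^*\mathbb{T}_{\mathbb{C}}$ is indeed the source-simply-connected groupoid.

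With this identification in hand, I would then verify the explicit structure directly. The groupoid axioms (units $\alpha=0$, inverse $(\alpha,w)\mapsto(-\alpha,w)$, associativity) follow immediately from linearity of $C$ and commutativity of $\mathbb{T}_{\mathbb{C}}$, and a short calculation confirms that $s,t,m$ close up on objects. To see that $\Omega_0$ is multiplicative it suffices to compute the coboundary of the tautological form $\alpha_0=\sum_i\alpha_i\,\tfrac{dw_i}{w_i}$ on composable pairs; using the multiplication formula one finds
\[
m^*\alpha_0-p_1^*\alpha_0-p_2^*\alpha_0=\tfrac12\,d\,C(\alpha,\beta),
\]
an \emph{exact} form, so $\Omega_0=d\alpha_0$ is multiplicative even though $\alpha_0$ itself is not. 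Identifying the integrated Poisson structure as $C^R$ then amounts to checking that $t$ is Poisson, which is immediate from the constant-coefficient picture above.

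For the equivariance statement I would note that right translation by $\mathbb{T}_{\mathbb{C}}$ on the second factor commutes with $s,t,m$, hence acts by groupoid automorphisms; that this action is Hamiltonian with moment map $J_0(\alpha,w)=i\alpha$ follows by pairing the generating vector fields against $\Omega_0$, and additivity of $\alpha$ under $m$ shows $J_0$ is a groupoid homomorphism into $(\mathfrak{t}_{\mathbb{C}}^*,+)$. Finally, for the prequantization, the connection $d-2\pi i\alpha_0$ has curvature $-2\pi i\Omega_0$, and its multiplicative structure is the flat section of $\delta\mathcal{O}$ trivializing the induced connection $d-2\pi i(m^*\alpha_0-p_1^*\alpha_0-p_2^*\alpha_0)=d-\pi i\,dC(\alpha,\beta)$; integrating this exact coboundary yields exactly $\theta_0=e^{-i\pi C(\beta,\alpha)}$, and the cocycle identity on triple products reduces to the bilinearity and antisymmetry of $C$.

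The main obstacle is not conceptual but bookkeeping: pinning down every factor of $i$, $2$ and $\pi$ dictated by the holomorphic convention $\Im\Omega_0=\text{canonical}$, the normalization of the tautological form, and the moment-map shift $J_0=i\alpha$, so that the coboundary comes out to precisely $\tfrac12\,dC(\alpha,\beta)$ and the cocycle to precisely $e^{-i\pi C(\beta,\alpha)}$ rather than off by a unit factor. One must also check that all formulas genuinely descend through the lattice quotient, which they do since they depend on $w$ rather than on a choice of logarithm, and that the multiplicativity of $\Omega_0$ survives the non-multiplicativity of $\alpha_0$.
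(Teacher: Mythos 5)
The paper does not actually prove this statement: Theorem~\ref{WeinsteinXutorusgroupoid} is imported wholesale from Weinstein and Xu, with no argument supplied, so there is no internal proof to compare yours against. Judged on its own merits, your proof is correct and complete in outline, and it is a reasonable self-contained route to the result. The key reductions all check out: in logarithmic coordinates $C^R$ is a constant bivector on the cover $\mathfrak{t}_{\mathbb{C}}$, its cotangent algebroid is the action algebroid of the abelian Lie algebra $\mathfrak{t}_{\mathbb{C}}^*$ acting by the constant vector fields $\xi\mapsto C(\xi)$, the action groupoid $\mathfrak{t}_{\mathbb{C}}^*\ltimes\mathfrak{t}_{\mathbb{C}}$ is its source-simply-connected integration, and every structure (maps, form, cocycle) descends through the lattice quotient since it depends on $u$ only through $du$ and $e^u$. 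I verified your central computation: parametrizing composable pairs by the common base point, one gets $m^*\alpha_0-p_1^*\alpha_0-p_2^*\alpha_0=\tfrac12\,dC(\alpha,\beta)$ with the paper's normalization of $\alpha_0$ (the one fixed by $\Im\Omega_0=\omega_{\mathrm{can}}$ and $J_0=i\alpha$, i.e.\ $\alpha_0=\langle J_0,du\rangle$ in invariant coordinates), which is exact, so $\Omega_0$ is multiplicative; integrating it and normalizing to $1$ on units gives precisely $\theta_0=e^{-i\pi C(\beta,\alpha)}$, and $\delta\theta_0=1$ follows from bilinearity of $C$. The identification $t_*\Omega_0^{-1}=C^R$ also comes out with the right constant: the $i$ in the structure maps exactly compensates the $i$ relating $\Omega_0$ to the tautological holomorphic form.

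One point deserves sharpening: you present the symmetrization $s(\alpha,u)=u-\tfrac i2C(\alpha)$, $t(\alpha,u)=u+\tfrac i2C(\alpha)$ as a cosmetic choice, but it is forced. For the unsymmetrized action groupoid ($s(\alpha,u)=u$, $t(\alpha,u)=u+iC(\alpha)$) the canonical symplectic form is \emph{not} multiplicative --- the coboundary $m^*\Omega_0-p_1^*\Omega_0-p_2^*\Omega_0$ equals a nonzero multiple of $\langle d\beta\wedge C(d\alpha)\rangle$ --- whereas for the symmetrized maps the two cross terms cancel. Your argument survives because you verify multiplicativity directly for the symmetrized maps rather than inheriting it from the action-groupoid picture, but the write-up should make clear that the symmetrized presentation is the one compatible with the canonical form, not merely an equivalent normalization. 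With that caveat, and the convention bookkeeping you already flag, the proposal stands.
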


Carrying through the symplectic reduction, using the explicit forms for the groupoids $\mathcal{G}_{C}$ and $T^{*}M$, we find that the symplectic groupoid of the Poisson structure $\sigma_{C}$ is given by the holomorphic cotangent bundle $(T^{*}M, \Omega_{0})$, but with deformed structure maps. More precisely, we find that the target and source maps are given, respectively, by 
\[
t(z) = e^{\frac{1}{2}CJ_{0}(z)} \pi(z), \qquad s(z) = e^{-\frac{1}{2}CJ_{0}(z)} \pi(z), 
\]
and the deformed multiplication is given by 
\begin{equation} \label{gpdproduct}
m(x,y) = e^{- \frac{1}{2}CJ_{0}(y)} x + e^{\frac{1}{2}CJ_{0}(x)}  y.
\end{equation}
In these expressions, $J_{0}$ is the holomorphic moment map for the $\mathbb{T}_{\mathbb{C}}$-action on $T^{*}M$, which was constructed in Section \ref{toricsection}, and which is a groupoid homomorphism to the abelian group $\mathfrak{t}^{*}_{\mathbb{C}}$. 

Xu's method may be upgraded to include the data of prequantizations. In the present case, we find that the symplectic groupoid of $\sigma_{C}$ carries a $\mathbb{T}_{\mathbb{C}}$-equivariant multiplicative prequantization. The line bundle with connection is given by $(\mathcal{O}_{T^{*}M}, d - 2\pi i \alpha_{0})$, where $\alpha_{0}$ is the canonical primitive for $\Omega_{0}$, and the multiplicative cocycle is given on the space of composable pairs of arrows by 
\begin{equation} \label{multcocyclelevel0}
\theta_{C}(x,y) = e^{i \pi C(J_{0}(x), J_{0}(y))}.
\end{equation}

\subsubsection{Monoidal category of A-branes}
We consider the A-model of the symplectic groupoid $(T^{*}M, \Im(\Omega_{0}))$. As a real symplectic manifold, this is just the usual cotangent bundle of $M$, and does not depend on the R-matrix $C$. As a result, its A-model is likewise independent of $C$. As we have seen above, the effect of turning on the R-matrix is to deform the groupoid structure maps (i.e. source, target and multiplication). \emph{This is reflected as a deformation of the monoidal structure on the category of A-branes.} The $0$-section $\mathcal{L}$ remains the monoidal unit. The product of branes is constructed as in Section \ref{Monoidalcotangentcat}, but with the groupoid product given by Equation \ref{gpdproduct}. 

\subsubsection{Lifting branes to the groupoid}
We now explain how the branes constructed in Section \ref{branessection} may be pulled-back to A-branes in the symplectic groupoid, following the proposal outlined in Section \ref{proposal}. Consider the product $M \times M$, equipped with the GC structure $\mathbb{J}_{C} \times \mathbb{J}_{C}^{\top}$, where $ \mathbb{J}_{C}^{\top}$ is the generalized complex structure corresponding to $-\sigma_{C}$. This may be viewed as a GC groupoid. The anchor map of the symplectic groupoid is obtained by combining the source and target maps
\[
(t, s) : (T^{*}M, Im( \Omega_{0})) \to (M \times M, \mathbb{J}_{C} \times \mathbb{J}_{C}^{\top}). 
\]
The graph of this map, which is identified with $T^*M$, is equipped with the trivial Hermitian line bundle with connection 
\[ d - 2\pi i Re(\alpha_{0}),\] 
which prequantizes $\Re(\Omega_{0})$. This defines a generalized complex brane in the product 
\[ (T^{*}M, -Im( \Omega_{0})) \times (M \times M, \mathbb{J}_{C} \times \mathbb{J}_{C}^{\top}).\] 
Such a brane is otherwise known as a generalized holomorphic map. Note that the multiplicative structure $ \exp(-i \pi Re(C(\beta, \alpha)))$ on the line bundle endows the anchor with the structure of a GC groupoid homomorphism. 

An important feature of such holomorphic maps is that they can be used to pullback branes. In this case, we may take GC branes on $M \times M$, and pull them back to A-branes in $T^*M$. We now work this out for the branes constructed in Section \ref{branessection}. Given a pair of natural numbers $(n, m) \in \mathbb{N}$, we have the following brane in $M \times M$
\[
B_{n} \boxtimes B_{m}^{*} = (L^{\otimes n}, h^{n}, \overline{ \nabla^{\otimes n}}) \boxtimes (L^{\otimes m}, h^{m}, \overline{ \nabla^{\otimes m}})^{*}.
\]
Pulling this back by the anchor map, we obtain the Hermitian line bundle with unitary connection
\begin{equation}\label{liftedunitarybranes}
\mathcal{B}_{n,m} \coloneqq t^{*}(L^{\otimes n}, h^{n}, \overline{ \nabla^{\otimes n}}) \otimes s^{*}(L^{\otimes m}, h^{m}, \overline{ \nabla^{\otimes m}})^{*} \otimes (\mathcal{O}_{T^{*}M}, 1, d - 2\pi i Re(\alpha_{0})),
\end{equation}
which is a space-filling A-brane in $T^{*}M$. As in Section \ref{holpq}, this space-filling brane has a holomorphic prequantization, obtained by adding the canonical primitive $2 \pi \Im( \alpha_{0})$ to all branes. This results in the following complex line bundles with connection 
\begin{equation}\label{liftedholpreqbranes}
(U_{n,m}, D_{n,m}) := (t^{*}(L^{\otimes n})\otimes s^{*}(L^{\otimes m})^{*}, t^{*}(\overline{ \nabla^{\otimes n}}) \otimes s^{*}(\overline{ \nabla^{\otimes m}})^{*} - 2\pi i \alpha_{0}),
\end{equation}
whose curvature is 
\[
\Omega_{n,m} = \Omega_{0} + t^{*}F_{n} - s^{*}F_{m}. 
\]

By results of \cite{bischoff2018morita, bischoffthesis}, $\Omega_{n,m}$ defines a holomorphic symplectic structure on $T^{*}M$ for a deformed complex structure, and the $(0,1)$-component of $D_{n,m}$ defines a complex structure on $U_{n,m}$, such that $D_{n,m}$ defines a holomorphic prequantization of $\Omega_{n,m}$. Furthermore, the space $T^{*}M$, equipped with these structures, defines a holomorphic symplectic Morita equivalence between the holomorphic Poisson structures $\sigma_{m}=\varphi_m^*\sigma_C$ and $\sigma_{n}=\varphi_n^*\sigma_C$. This means that the fibres of $t$ and $s$ are orthogonal with respect to $\Omega_{n,m}$, that $t$ defines a holomorphic Poisson map from $\Omega_{n,m}^{-1}$ to $\sigma_{n}$, and that $s$ defines a holomorphic Poisson map from $\Omega_{n,m}^{-1}$ to $-\sigma_{m}$. Let us denote this Morita equivalence by $Z_{n,m}$, and let $\mathcal{L}_{n,m}$ denote the zero section when viewed as a submanifold of $Z_{n,m}$. 

The branes on $M \times M$ satisfy $(B_{n} \boxtimes B_{m}^{*}) \ast (B_{m} \boxtimes B_{k}^{*}) \cong B_{n} \boxtimes B_{k}^{*}$, and hence the multiplicativity of the anchor map induces an isomorphism 
\begin{equation} \label{braneMultproperty}
\mathcal{B}_{n,m} \ast \mathcal{B}_{m, k} \cong \mathcal{B}_{n,k}. 
\end{equation}
The monoidal product of the branes $\mathcal{B}_{n,m}$ and $\mathcal{B}_{m, k}$ corresponds to the composition of the Morita equivalences $Z_{n,m}$ and $Z_{m, k}$. Therefore, the multiplicativity of the branes corresponds to a holomorphic product map 
\[
Z_{n,m} \times_{M} Z_{m,k} \to Z_{n, k},
\]
such that $\mathcal{L}_{n,m} \ast \mathcal{L}_{m,k} = \mathcal{L}_{n,k}$, and such that the multiplicative cocycle of Equation \ref{multcocyclelevel0} defines a flat morphism  
\[
\Theta_{C} : (U_{n, m}, D_{n,m}) \boxtimes (U_{m, k}, D_{m, k})|_{Z_{n,m} \times_{M} Z_{m,k}}  \to (U_{n,k}, D_{n,k}). 
\]

\subsubsection{Lifting the autoequivalence}
We explain how to lift the auto-equivalence $T_{C}$ to the A-model of the symplectic groupoid. Part of the data underlying $T_{C}$ is the Poisson automorphism $\varphi_{1}$ of $(M, Q)$. This integrates to a symplectic groupoid automorphism $\Phi$ of $(T^{*}M, \Im(\Omega_{0}))$. Hence, this acts on the A-model as a monoidal self-equivalence by pulling back branes. In particular, since $\Phi$ preserves the identity bisection, it satisfies $\Phi^{*}(\mathcal{L}) = \mathcal{L}$. We now verify that the groupoid automorphism acts as a shift operator on the collection of space-filling branes $B_{n,m}$ and thus on their holomorphic prequantizations:

\begin{lemma} \label{braneautolift}
There are canonical isomorphisms as follows for all $n,m$:
\[
\Phi^*(\mathcal{B}_{n,m}) \cong \mathcal{B}_{n+1, m+1}.
\]
Furthermore, this gives rise to analogous isomorphisms between the holomorphic prequantizations. 
\end{lemma}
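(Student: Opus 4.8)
The plan is to reduce the statement to two inputs: that $\Phi$ is a symplectic groupoid automorphism lying over the base diffeomorphism $\varphi_1$, and that the averaging defining $T_C$ obeys a clean recursion under the flow $\varphi_t$. First I would record the consequences of $\Phi$ being a groupoid automorphism of $(T^{*}M, \Im(\Omega_0))$ covering $\varphi_1$ on the space of units: namely $t\circ\Phi = \varphi_1\circ t$ and $s\circ\Phi = \varphi_1\circ s$, together with $\Phi^{*}(\mathcal{L}) = \mathcal{L}$ as already noted. These let me commute $\Phi^{*}$ past the target and source pullbacks in the defining expression \eqref{liftedunitarybranes} for $\mathcal{B}_{n,m}$, replacing the factors $B_{n}$ and $B_{m}$ by $\varphi_1^{*}B_{n}$ and $\varphi_1^{*}B_{m}$.

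Next I would establish the flow identity. Since $F_{t} = \int_{0}^{t} \varphi_{s}^{*}\omega\,ds$ and the $\varphi_{t}$ form a one-parameter group, the relation $\varphi_{s}\circ\varphi_1 = \varphi_{s+1}$ and the substitution $u = s+1$ give $\varphi_1^{*}F_{n} = F_{n+1} - F_{1}$; correspondingly the recursion \eqref{tensoraction} reads $B_{n+1} = (L,h,\overline{\nabla})\otimes\varphi_1^{*}(B_{n})$, so that $\varphi_1^{*}B_{n} \cong B_{1}^{*}\otimes B_{n+1}$ as Hermitian line bundles with unitary connection, where $B_{1} = (L,h,\overline{\nabla})$ has curvature $-2\pi i F_{1}$. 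Substituting into $\Phi^{*}\mathcal{B}_{n,m}$ and collecting terms produces $\mathcal{B}_{n+1,m+1}$ tensored with the discrepancy bundle $t^{*}B_{1}^{*}\otimes s^{*}B_{1}\otimes\Phi^{*}(\mathcal{O},d-2\pi i\Re(\alpha_0))\otimes(\mathcal{O},d-2\pi i\Re(\alpha_0))^{*}$. As a consistency check at the level of curvatures, using $\Omega_{n,m} = \Omega_0 + t^{*}F_{n} - s^{*}F_{m}$ one finds that the $\pm t^{*}F_{1}$ and $\mp s^{*}F_{1}$ contributions cancel exactly, so the discrepancy bundle is flat and the problem reduces to trivializing it canonically.

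The crux, and the step I expect to be the main obstacle, is showing that this discrepancy bundle is canonically the trivial flat bundle, i.e.\ that \[ \Phi^{*}(\mathcal{O},d-2\pi i\Re(\alpha_0)) \cong t^{*}(L,h,\overline{\nabla})\otimes s^{*}(L,h,\overline{\nabla})^{*}\otimes(\mathcal{O},d-2\pi i\Re(\alpha_0)) \] as unitary bundles with connection. At the level of curvatures this is the identity $\Phi^{*}\Re(\Omega_0) = \Re(\Omega_0) + t^{*}F_{1} - s^{*}F_{1}$, which I would establish by computing $\Phi^{*}\alpha_0 - \alpha_0$ directly. Since $\Phi$ is the groupoid lift of a base diffeomorphism it preserves the real tautological form $\Im(\alpha_0)$, so the entire discrepancy lives in the real part; I would then use that $\Phi$ integrates the infinitesimal Courant symmetry $(\varphi_t, F_t)$, whose $B$-field part $F_{1}$ enters the two legs through the oppositely deformed maps $t$ and $s$, to show that $\Phi^{*}\Re(\alpha_0) - \Re(\alpha_0)$ is a primitive of $t^{*}F_{1} - s^{*}F_{1}$. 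Matching this primitive against the connection $1$-forms of $t^{*}\overline{\nabla}$ and $s^{*}\overline{\nabla}$ pins down the unitary gauge transformation, and the flatness established above guarantees that it is globally well defined and canonical.

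Finally I would upgrade to the holomorphic prequantizations \eqref{liftedholpreqbranes} by adding the canonical primitive $2\pi\Im(\alpha_0)$ to every connection, as in Section~\ref{holpq}. Because $\Phi$ preserves $\Im(\alpha_0)$, this term is transported unchanged, so the very same bundle isomorphism intertwines $\Phi^{*}D_{n,m}$ with $D_{n+1,m+1}$ and is compatible with the complex structures defined by the $(0,1)$-parts of the connections. This yields the asserted canonical isomorphisms $\Phi^{*}(U_{n,m},D_{n,m}) \cong (U_{n+1,m+1},D_{n+1,m+1})$.
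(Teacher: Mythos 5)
Your overall skeleton coincides with the paper's: commute $\Phi^*$ past $t^*$ and $s^*$, use the recursion $B_{n+1}=T_C(B_n)=(L,h,\overline{\nabla})\otimes\varphi_1^*(B_n)$, and reduce the lemma to a canonical trivialization of a flat discrepancy bundle, i.e.\ to the case $\Phi^*(\mathcal{B}_{0,0})\cong\mathcal{B}_{1,1}$. The two steps by which you then handle this crux, however, contain genuine gaps. The load-bearing one is your assertion that $\Phi$ preserves $\Im(\alpha_0)$ ``since it is the groupoid lift of a base diffeomorphism.'' That reasoning is valid only for cotangent lifts, i.e.\ for automorphisms of the \emph{additive} groupoid structure on $T^*M$; here $\Phi$ is the integration of $\varphi_1$ with respect to the $R$-matrix--deformed structure of Section \ref{XuConstruction}, and it is not the cotangent lift of $\varphi_1$ (the cotangent lift does not even satisfy $s\circ\Phi=\varphi_1\circ s$ for the deformed source map, since $\varphi_1$ neither preserves $I$ nor commutes with the $\mathbb{T}_{\mathbb{C}}$-action entering $J_0$). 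Worse, the claim appears to be false in general: any multiplicative lift $\tilde{W}$ of $W$ must be both $s$- and $t$-related to $W$, and because $s$ and $t$ differ from $\pi$ by the translations $e^{\mp\frac{1}{2}CJ_0}$, these relations prescribe the derivative of $J_0$ along $\tilde{W}$; computing in the flat model on the dense torus orbit, these prescribed values are compatible with $\mathcal{L}_{\tilde{W}}\Im(\alpha_0)=0$ only when $[A,B]=0$ and $A^2+B^2=0$, which fails for every nonzero $C$ already when $\dim_{\mathbb{C}}\mathbb{T}_{\mathbb{C}}=2$. (Relatedly, $\Im(\alpha_0)$ is not multiplicative for the deformed product; the nontriviality of the cocycle $\theta_C=e^{i\pi C(J_0,J_0)}$ records exactly this failure.) Since this claim is what reduces your curvature discrepancy to the real part \emph{and} what drives your final passage from the unitary to the holomorphic prequantizations, both of those steps collapse. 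The paper's proof avoids the issue entirely: it works with the holomorphic data, obtaining $\Phi^*(\Omega_0)=\Omega_0+t^*F_1-s^*F_1$ not from a computation with primitives but by recognizing $\Phi$ as an isomorphism of holomorphic symplectic groupoids from the integration of $\sigma_1$ to that of $\sigma_0$ and invoking Proposition 6.3 of \cite{bailey2016integration}; the unitary statement is then obtained in parallel, not deduced from the holomorphic one.

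The second gap is canonicity. Matching curvatures, or primitives of curvatures, only shows that the discrepancy bundle is flat; since $t^*L\otimes s^*L^*$ carries no preferred trivialization, ``matching the primitive against the connection $1$-forms'' does not single out a bundle isomorphism. On the simply connected space $T^*M$ a flat trivialization exists, but it is canonical only up to a global scalar, and fixing that scalar is not cosmetic: it is what makes the isomorphisms of Lemma \ref{braneautolift} respect the multiplicative structures, which is how the lemma is used in Section \ref{algfromtwistedgrmon}. The paper pins this down by observing that both bundles restrict canonically to $(\mathcal{O},d)$ over the identity bisection $\mathcal{L}$ (where $t=s=\mathrm{id}$ and $\alpha_0$ vanishes), and that this flat identification over the deformation retract $\mathcal{L}$ extends uniquely by parallel transport. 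Your argument needs this normalization step; with it, and with the curvature identity imported as above rather than derived via $\Im(\alpha_0)$, your proof becomes essentially the paper's.
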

\begin{proof}
The map $\varphi_{1}$ defines a holomorphic Poisson isomorphism from $\sigma_{1}$ to $\sigma_{0}$, and hence $\Phi$ defines an isomorphism between their holomorphic symplectic groupoids. The symplectic groupoid of $\sigma_{0}$ is the one described above. By Proposition 6.3 of \cite{bailey2016integration}, the symplectic groupoid of $\sigma_{1}$ is obtained from the symplectic groupoid of $\sigma_{0}$ by deforming the symplectic form to 
\[
\Omega_{1,1} = \Omega_{0} + t^{*}F_{1} - s^{*}F_{1}.
\]
Hence $\Phi^{*}(\Omega_{0}) = \Omega_{0} + t^{*}F_{1} - s^{*}F_{1}$. The holomorphic prequantization of $\Omega_{1,1}$ is given by $(U_{1,1}, D_{1,1})$. This line bundle has the same curvature as $\Phi^{*}(\mathcal{O}_{T^{*}(\mathbb{T}_{\mathbb{C}})}, d - 2 \pi i \alpha_{0})$, and the two bundles are canonically identified over $\mathcal{L}$. Using the connections, we therefore get a flat isomorphism between the bundles. Note that this isomorphism respects the multiplicative structures, as can be seen by comparing them over $\mathcal{L}$. Using the unitary version $(\mathcal{O}_{T^{*}(\mathbb{T}_{\mathbb{C}})}, d - 2 \pi i \Re(\alpha_{0}))$, we similarly obtain an isomorphism $\Phi^{*}(\mathcal{B}_{0,0}) \cong \mathcal{B}_{1,1}$. 

Now given a brane $\mathcal{B}_{n,m}$ we get 
\begin{align*}
\Phi^{*}(U_{n,m}) &= \Phi^* t^{*}(L^{\otimes n},  \overline{ \nabla^{\otimes n}}) \otimes \Phi^* s^{*}(L^{\otimes m}, \overline{ \nabla^{\otimes m}})^{*} \otimes \Phi^* (\mathcal{O}_{T^{*}M}, d - 2\pi i \alpha_{0}) \\
&\cong t^{*} \varphi_{1}^* (L^{\otimes n}, \overline{ \nabla^{\otimes n}}) \otimes s^{*} \varphi_{1}^{*}(L^{\otimes m},  \overline{ \nabla^{\otimes m}})^{*} \otimes U_{1,1} \\
&= t^{*}(T_{C}(B_{n})) \otimes s^* (T_{C}(B_{m}))^* \otimes (\mathcal{O}_{T^{*}M}, d - 2\pi i \alpha_{0}) \\
&= U_{n+1, m+1},
\end{align*}
and similarly for the branes. 
\end{proof}

\subsubsection{Algebras from twisted graded monoids} \label{algfromtwistedgrmon}
In order to construct a graded algebra from the sequence of generalized complex branes $B_{n}$, we follow the proposal of Section \ref{proposal}. The algebra is defined on the graded vector space 
\[
\mathcal{A}_{C} = \bigoplus_{n \geq 0} \Hom{\mathcal{L}, \mathcal{B}_{n,0}},
\]
with a product defined using the monoidal product and the autoequivalence $\Phi^{*}$. Analogously to the branes considered in Section \ref{Algfrommonoid}, the collection of branes $\{ \mathcal{B}_{n,0} \}_{n \in \mathbb{Z}}$ form a $\Phi$-twisted $\mathbb{Z}$-graded monoid. This means that instead of a map as in Equation \ref{braneproduct}, we have the following isomorphisms
\begin{equation} \label{twistedgradedmonoid}
(\Phi^{*})^{m}(\mathcal{B}_{n,0}) \ast \mathcal{B}_{m,0} \cong \mathcal{B}_{n+m, 0},
\end{equation}
which are the result of the morphisms of Equation \ref{braneMultproperty} and of Lemma \ref{braneautolift}. This allows us to define the product on the algebra as the following composition 
\begin{align*}
 &\Hom{\mathcal{L}, \mathcal{B}_{n,0}} \otimes  \Hom{\mathcal{L}, \mathcal{B}_{m,0}} \to   \Hom{\mathcal{L},(\Phi^{*})^{m}(\mathcal{B}_{n,0})} \otimes  \Hom{\mathcal{L}, \mathcal{B}_{m,0}} \to \\  &\Hom{\mathcal{L} \ast \mathcal{L},(\Phi^{*})^{m}(\mathcal{B}_{n,0}) \ast \mathcal{B}_{m,0}} \to  \Hom{\mathcal{L}, \mathcal{B}_{n + m,0}}.
\end{align*}

Just as in Section \ref{Congradedhomcoordring}, we may encode the $\Phi$-twisted $\mathbb{Z}$-graded monoid $\{ \mathcal{B}_{n,0} \}_{n \in \mathbb{Z}}$ as a holomorphic symplectic groupoid with prequantizing multiplicative line bundle and multiplicative system of brane bisections. Indeed, we have already seen that the branes $\mathcal{B}_{n,m}$ are encoded as prequantized Morita equivalences $Z_{n,m}$ relating holomorphic Poisson structures $\sigma_{m}$ and $\sigma_{n}$, and that the tensor product of branes corresponds to the composition of Morita equivalences. 

We now define $(Z_{n}, \Omega_{n})$ to be the Morita equivalence $(Z_{n,0}, \Omega_{n, 0})$, but where the map $t: Z_{n,0} \to M$ has been modified to $\varphi_{1} \circ t$, which is a holomorphic Poisson morphism from $(Z_{n}, \Omega_{n}^{-1})$ to $(M, I_{0}, \sigma_{0})$. Therefore, $(Z_{n}, \Omega_{n})$ defines a Morita self-equivalence of $\sigma_{C}$. In order to compose the Morita equivalences $Z_{n}$ and $Z_{m}$, we must now use the morphism $\Phi^{m}$ to pull-back the points of $Z_{n}$. In this way, we obtain a composition map 
\[
Z_{n} \times_{M} Z_{m} \to Z_{n + m}, \qquad (g, h) \mapsto m(\Phi^{-m}(g), h). 
\]
Assembling all of these maps together, the disjoint union $(\mathcal{Z}, \Omega) = \sqcup_{n \in \mathbb{Z}} (Z_{n}, \Omega_{n})$ has the structure of a holomorphic symplectic groupoid integrating the toric Poisson structure $\sigma_{C}$. In a similar way, we may equip the disjoint union $(\mathcal{U}, D) = \sqcup_{n \in \mathbb{Z}} (U_{n,0}, D_{n,0})$ with a multiplicative structure, making it into a multiplicative prequantization of $(\mathcal{Z}, \Omega)$. Furthermore, the submanifolds $\mathcal{L}_{n} := \mathcal{L}_{n,0} \subset Z_{n}$ are sections of the source and target maps which satisfy $\mathcal{L}_{n} \ast \mathcal{L}_{m} = \mathcal{L}_{n+m}$. 

\subsection{Prequantization via holomorphic symplectic reduction}
In this section, we give a construction via symplectic reduction of the prequantized symplectic groupoid with multiplicative bisections $(\mathcal{Z}, \Omega, \mathcal{U}, D, \{\mathcal{L}_{n} \}_{n \in \mathbb{Z}})$ considered in the previous section. This construction has the advantage of making transparent several additional structures which will be useful in identifying the graded algebra $\mathcal{A}_{C}$. Furthermore, as we will see, this groupoid may be viewed as a deformation of the groupoid from Section \ref{Congradedhomcoordring} which was used in the construction of the homogeneous coordinate ring. 

\begin{theorem} \label{deformedbrane}
Let $(\mathcal{Z}, \Omega)$ denote the holomorphic symplectic groupoid considered in Proposition \ref{multstruct1}, which consists of a disjoint union of holomorphic symplectic affine bundles for the cotangent bundle of $M$. This groupoid admits a deformation to a holomorphic symplectic groupoid integrating the toric Poisson structure $\sigma_{C}$. This deformation is given by fixing the underlying holomorphic symplectic manifold and deforming the structure maps. Namely, the deformed target and source maps are given, respectively, by 
\[
t(z) = e^{\frac{1}{2}CJ(z)} \pi(z), \qquad s(z) = e^{-\frac{1}{2}CJ(z)} \pi(z), 
\]
and the deformed multiplication is given by 
\[
m(x,y) = e^{- \frac{1}{2}CJ(y)} x + e^{\frac{1}{2}CJ(x)} y.
\]
The Hamiltonian action of $\mathbb{T}_{\mathbb{C}}$ continues to act by groupoid isomorphisms, and the moment map $J$ remains a groupoid homomorphism. 

The equivariant prequantization of Proposition \ref{multstruct1} also admits a deformation to a multiplicative prequantization $(\mathcal{U}, D, \Theta_{C})$. The underlying line bundle with connection $(\mathcal{U}, D)$ is as before, but the multiplicative cocycle is deformed. The deformed cocycle is defined on the space of composable arrows to be 
\begin{equation} \label{BigRmatmultcocycle}
\Theta_{C}(x,y)(a \otimes b) = e^{i \pi C(J(x), J(y))}  \Theta(e^{- \frac{1}{2}CJ(y)}  a, e^{\frac{1}{2}CJ(x)}  b),
\end{equation}
where $a \in \mathcal{U}_{x}$ and $b \in \mathcal{U}_{y}$, and $\Theta$ is the multiplicative structure of Proposition \ref{multstruct1}.
Furthermore, there is a canonical holomorphic $\mathbb{T}_{\mathbb{C}}$-equivariant isomorphism
\begin{equation} \label{equivpullbackiso}
\phi_{n} : U_{n} \stackrel{\cong}{\longrightarrow}  s^{*}(L^{\otimes n}),
\end{equation}
where $U_{n}$ is the restriction of $\mathcal{U}$ to $Z_{n}$. This isomorphism is obtained by combining the isomorphism $U_{n} \cong \pi^{*}(L^{\otimes n})$ from Proposition \ref{reductionprop} with the following map which is induced by the equivariant structure 
\[
\exp(-\tfrac{1}{2}CJ): \pi^{*}(L^{\otimes n}) \to s^{*}(L^{\otimes n}).
\]

Finally, there is a family of Lagrangian branes $\mathcal{L}_{n} \subset Z_{n}$ which satisfy $\mathcal{L}_{n} \ast \mathcal{L}_{m} = \mathcal{L}_{n+m}$ for all $(n,m) \in \mathbb{Z}^{2}$. These branes are sections for both $s$ and $t$, and the brane $\mathcal{L}_{n}$ induces the diffeomorphism $\varphi_{n}$ on $M$. Viewing $\mathcal{L}_{n}$ as a section of $s$, the restrictions of $\Omega_{n}$ and $J_{n}$ to $\mathcal{L}_{n}$ are given by 
\[
\mathcal{L}_{n}^{*}\Omega_{n} = F_{n}, \qquad \mathcal{L}_{n}^{*}J_{n} = -i \int_{0}^{n}\varphi_{t}^{*}(\mu) dt. 
\] 
Furthermore, the restriction of the isomorphism of Equation \ref{equivpullbackiso} (using the source map) to $\mathcal{L}_{n}$ yields the isomorphism 
\[
\mathcal{L}_{n}^{*}(\phi_{n}): \mathcal{L}_{n}^{*}(U_{n}, D_{n}) \stackrel{\cong}{\longrightarrow} e^{-R_{n}} (L^{\otimes n}, \overline{ \nabla^{\otimes n}}) e^{R_{n}},
\]
where $e^{R_{n}}$ is a gauge transformation, and 
\[
R_{n} = i \pi \int_{0}^{n} \int_{0}^{t} C(\varphi_{t}^{*}\mu, \varphi_{s}^{*}\mu) ds dt.
\]
\end{theorem}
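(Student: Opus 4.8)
The plan is to obtain $(\mathcal{Z}, \Omega, \mathcal{U}, D, \Theta_{C})$ by a single holomorphic symplectic reduction that simultaneously realizes the $\mathbb{C}^{*}$-grading of Proposition \ref{multstruct1} and the R-matrix deformation of Xu's construction from Section \ref{XuConstruction}. Concretely, I would start from the product of prequantized holomorphic symplectic groupoids $\mathcal{G}_{C} \times T^{*}P_{L}$, where $\mathcal{G}_{C} = (T^{*}(\mathbb{T}_{\mathbb{C}}), \Omega_{0}, \theta_{0})$ is the Weinstein--Xu groupoid of $(\mathbb{T}_{\mathbb{C}}, C^{R})$ from Theorem \ref{WeinsteinXutorusgroupoid}, and $T^{*}P_{L}$ carries its trivial multiplicative prequantization $(\mathcal{O}, d - 2\pi i \alpha_{0}, \Theta_{0} = 1)$ as in Proposition \ref{multstruct1}. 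This product carries three commuting Hamiltonian actions by groupoid automorphisms: the $\mathbb{C}^{*}$-scaling on $T^{*}P_{L}$ with moment map $\mathcal{E}$, the multiplicative $\mathbb{T}_{\mathbb{C}}$-action on $\mathcal{G}_{C}$ with moment map $J_{0}(\alpha, w) = i\alpha$, and the equivariant $\mathbb{T}_{\mathbb{C}}$-action on $T^{*}P_{L}$ lifted from the $\mathbb{T}_{\mathbb{C}}$-structure on $L$. I would reduce by the anti-diagonal $\mathbb{T}_{\mathbb{C}}$ at level $0$ to turn on the R-matrix, and by $\mathbb{C}^{*}$ at the discrete set $\mathcal{E}^{-1}(\tfrac{1}{2\pi}\mathbb{Z})$ to produce the grading. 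Since each action is by groupoid automorphisms with a moment map that is a groupoid homomorphism, the reduced space inherits a holomorphic symplectic groupoid structure, the residual diagonal $\mathbb{T}_{\mathbb{C}}$-action acts by isomorphisms, and the surviving moment map $J$ is a homomorphism.

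First I would identify the reduced manifold with the $\mathcal{Z} = \sqcup_{n} Z_{n}$ of Proposition \ref{multstruct1}: the $\mathbb{T}_{\mathbb{C}}$-reduction against $\mathcal{G}_{C}$ leaves the underlying holomorphic symplectic manifold unchanged, exactly as it fixes $T^{*}M$ in Section \ref{XuConstruction}, while the $\mathbb{C}^{*}$-reduction produces the graded pieces $Z_{n}$. The deformed structure maps then come out by the same computation as Equation \ref{gpdproduct}, now with $J_{0}$ replaced by the global moment map $J$: the shear of the $\mathcal{G}_{C}$-factor on the $\mathbb{T}_{\mathbb{C}}$-direction contributes the factors $e^{\pm \tfrac{1}{2}CJ}$ in $t$, $s$, and $m$. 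This is a routine transcription of Xu's reduction, so I would not dwell on it.

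For the prequantization, the tensor product of the two multiplicative structures on $\mathcal{G}_{C} \times T^{*}P_{L}$ has cocycle $\theta_{0} \cdot 1 = \theta_{0}$. Upon reduction, the descent of $\theta_{0}((\beta, w_{1}),(\alpha, w_{2})) = e^{-i\pi C(\beta, \alpha)}$ produces the phase $e^{i\pi C(J(x), J(y))}$, reproducing the level-zero formula of Equation \ref{multcocyclelevel0}, while the $\mathbb{C}^{*}$-reduction of $\Theta_{0} = 1$ reproduces the undeformed cocycle $\Theta$ of Proposition \ref{multstruct1}; the $e^{\pm\tfrac{1}{2}CJ}$ shifts appear in the arguments of $\Theta$ precisely because the deformed multiplication shifts each factor before the undeformed addition. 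This yields Equation \ref{BigRmatmultcocycle}, with the cocycle condition inherited from the ambient groupoids. The equivariant isomorphism $\phi_{n}$ of Equation \ref{equivpullbackiso} I would then build by composing the identification $U_{n} \cong \pi^{*}(L^{\otimes n})$ of Proposition \ref{reductionprop} with the map $\exp(-\tfrac{1}{2}CJ) : \pi^{*}(L^{\otimes n}) \to s^{*}(L^{\otimes n})$, which is well-defined because $s = e^{-\tfrac{1}{2}CJ}\pi$ and translation by $e^{-\tfrac{1}{2}CJ} \in \mathbb{T}_{\mathbb{C}}$ is implemented by the equivariant action of Proposition \ref{liftedequivariantaction}.

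Finally, the Lagrangian branes $\mathcal{L}_{n}$ are the reductions of the conormal subgroupoid $C_{S}$ as in Proposition \ref{reductionprop}; that $C_{S}$ is a subgroupoid gives $\mathcal{L}_{n} \ast \mathcal{L}_{m} = \mathcal{L}_{n+m}$ at once, and the deformed $s, t$ make $\mathcal{L}_{n}$ a bisection inducing the time-$n$ flow $\varphi_{n}$ of $W$. The identities $\mathcal{L}_{n}^{*}\Omega_{n} = F_{n}$ and $\mathcal{L}_{n}^{*}J_{n} = -i\int_{0}^{n}\varphi_{t}^{*}(\mu)\,dt$ I would read off from the averaging descriptions $F_{n} = \int_{0}^{n}\varphi_{t}^{*}\omega\,dt$ and the averaged moment map, in parallel with Proposition \ref{Hamiltonianlift}. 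The genuine difficulty is the last assertion, the gauge factor $e^{R_{n}}$ with $R_{n} = i\pi\int_{0}^{n}\int_{0}^{t} C(\varphi_{t}^{*}\mu, \varphi_{s}^{*}\mu)\,ds\,dt$, and I expect this to be the crux. The point is that $\mathcal{L}_{n}$ is the groupoid composition of the infinitesimal bisections $\mathcal{L}_{\epsilon}$ along the flow, and each composition accumulates a phase through the deformed cocycle $\Theta_{C}$; since the structure maps carry the non-commuting shifts $e^{\pm\tfrac{1}{2}CJ}$, this accumulation is a path-ordered exponential whose leading correction is exactly the antisymmetric double integral $R_{n}$. Making this precise, namely showing that $\mathcal{L}_{n}^{*}(\phi_{n})$ differs from the naive identification of $L^{\otimes n}$ by precisely the gauge transformation $e^{R_{n}}$ and no higher-order terms, is where the real computation lies; I would carry it out by differentiating in $n$ and integrating the resulting first-order transport equation, with $\Theta_{C}$ evaluated on the composable pairs $\mathcal{L}_{t} \ast \mathcal{L}_{\epsilon}$.
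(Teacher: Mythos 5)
Your construction of the deformed groupoid, the cocycle $\Theta_{C}$, and the isomorphisms $\phi_{n}$ follows the same route as the paper: the paper applies Xu's method to the Poisson module structure $\hat{\sigma}_{C}$ on $P_{L}$ — which is exactly your anti-diagonal $\mathbb{T}_{\mathbb{C}}$-reduction of $\mathcal{G}_{C} \times T^{*}P_{L}$ — and then reduces by $\mathbb{C}^{*}$ at $\mathcal{E}^{-1}(\tfrac{1}{2\pi}\mathbb{Z})$, noting that since the symplectic manifold, actions, moment maps and equivariant prequantization are all independent of $C$, only the structure maps and the cocycle deform. Up to that point your proposal is correct and matches the paper.

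The gap is in the final part, which you rightly flag as the crux but do not actually supply. The missing device is the paper's explicit realization of the deformed bisections: $S \subset (P_{L}, \hat{Q})$ integrates to a Lagrangian subgroupoid $C_{S}$ of $T^{*}P_{L}$ which, for $C \neq 0$, is \emph{not} the conormal bundle of $S$ (the deformed source map moves conormal covectors off $S$, since $\mathbb{T}_{\mathbb{C}}$ does not preserve $h$); the paper instead exhibits $C_{S}$ as the flow-out of the zero section by the Hamiltonian vector field $W_{h} = \tfrac{1}{4\pi}(\Im \Omega_{0})^{-1}(t^{*}d\log h)$. All the concrete assertions you leave as "read off in parallel with Proposition \ref{Hamiltonianlift}" genuinely require this: Proposition \ref{Hamiltonianlift} treats the undeformed case where $\mathcal{L}_{t}$ is a zero section, whereas here $\mathcal{L}_{n}^{*}J_{n} = -i\int_{0}^{n}\varphi_{t}^{*}\mu\,dt$ comes from the identity $\mathcal{L}_{W_{h}}(\tilde{J}) = -i\,t^{*}\mu$, and the gauge factor comes from integrating $\mathcal{L}_{W_{h}}(\alpha_{0}) = \iota_{W_{h}}\Omega_{0} + d\langle \alpha_{0}, W_{h}\rangle$ along the flow, using $\langle \alpha_{0}, W_{h}\rangle = -\tfrac{i}{2}C(t^{*}\mu, \tilde{J})$, which produces exactly the exact term $dR_{n}$.

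Your alternative route to $R_{n}$ — a transport equation in $n$ obtained by evaluating $\Theta_{C}$ on composable pairs $\mathcal{L}_{t} \ast \mathcal{L}_{\epsilon}$ — cannot be run as stated. The prequantizations $U_{t}$, the bundles $L^{\otimes t}$, and hence the identifications $\phi_{t}$ exist only for integer $t$ (Section \ref{reductionsection}: the monodromy of $d - 2\pi i \alpha_{0}$ obstructs descent at non-integral levels), so "differentiating in $n$" the cocycle relation has no meaning downstairs. At integer steps one gets only the difference relation $R_{i+j} = R_{i}\circ\varphi_{j} + R_{j} - i\pi C(\mathcal{L}_{i}^{*}J_{i}\circ \varphi_{j}, \mathcal{L}_{j}^{*}J_{j})$ (this is what the paper verifies, after the fact, in Appendix \ref{Comparisonappendix}), which cannot determine $R_{n}$ without knowing $R_{1}$ — and computing $R_{1}$ is the original problem. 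Your ODE is correct in spirit: the stated $R_{n}$ is indeed the unique solution of $\partial_{n}R_{n} = W(R_{n}) + i\pi C\bigl(\int_{0}^{n}\varphi_{t}^{*}\mu\,dt, \mu\bigr)$ with $R_{0} = 0$. But producing this generator requires working upstairs on $T^{*}P_{L}$, where everything exists for all real $t$ on the trivial bundle, and comparing the two connections along the flow of $W_{h}$ — at which point you are carrying out precisely the Lie-derivative computation that constitutes the paper's proof, not an alternative to it.
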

\begin{proof}
The starting point is the holomorphic Poisson structure $\hat{\sigma}_{C}$ on $P_{L}$ which is defined by the equivariant structure on the line bundle, and which defines the Poisson module structure. This Poisson structure is invariant under the $\mathbb{T}_{\mathbb{C}}$-action defined by the equivariant structure, and by the $\mathbb{C}^{*}$-scaling action on the fibres. These lift to Hamiltonian actions on the holomorphic cotangent bundle $T^*P_{L}$. Recall from Section \ref{reductionsection} that the moment map for the scaling action is given by $i\mathcal{E}$; we denote the moment map for the $\mathbb{T}_{\mathbb{C}}$-action by $\tilde{J}$. 

By using the method of Xu outlined in Section \ref{XuConstruction} we construct the symplectic groupoid of $\hat{\sigma}_{C}$. It is given by the holomorphic cotangent bundle $(T^{*}P_{L}, \Omega_{0})$, but with deformed structure maps. Namely, the target and source maps are given, respectively, by 
\[
t(p) = e^{\frac{1}{2}C\tilde{J}(p)} \pi(p), \qquad s(p) = e^{-\frac{1}{2}C\tilde{J}(p)} \pi(p), 
\]
and the deformed multiplication is given by 
\[
m(p,q) = e^{- \frac{1}{2}C\tilde{J}(q)} p + e^{\frac{1}{2}C\tilde{J}(p)}  q.
\]
With respect to this new groupoid structure, the action of $\mathbb{T}_{\mathbb{C}} \times \mathbb{C}^{*}$ is by groupoid automorphisms, and the moment maps $\tilde{J}$ and $i \mathcal{E}$ are groupoid homomorphisms. 

This method extends to the prequantization. Namely, the trivial bundle with connection $d - 2 \pi i \alpha_{0}$, and trivial equivariant structure, defines an equivariant prequantization of $\Omega_{0}$. It is furthermore equipped with a multiplicative structure, given by the following cocycle on the space of composable pairs of arrows:
\begin{equation} \label{multstructurebeforered}
\tilde{\Theta}(p, q) = e^{i \pi C(\tilde{J}(p), \tilde{J}(q))}.
\end{equation}

Now as in the proof of Proposition \ref{multstruct1} we may use the $\mathbb{C}^{*}$-action to reduce $T^{*}P_{L}$ at a discrete subgroup of $\mathbb{C}$. In this way we obtain
\[
\mathcal{Z} = \mathcal{E}^{-1}(\tfrac{1}{2 \pi} \mathbb{Z})/\mathbb{C}^{*},
\]
which is a holomorphic symplectic groupoid over $M$ integrating the holomorphic Poisson structure $\sigma_{C}$. 

Because the holomorphic symplectic manifold $(T^{*}P_{L}, \Omega_{0})$, the group actions, the moment maps, and the equivariant prequantization are independent of the R-matrix $C$, the reduced structures are as in Proposition \ref{multstruct1}. Hence the resulting holomorphic symplectic manifold $(\mathcal{Z}, \Omega)$ is a disjoint union of the holomorphic affine bundles constructed in Proposition \ref{reductionprop}, and they are equipped with the same Hamiltonian $\mathbb{T}_{\mathbb{C}}$-action as constructed in Proposition \ref{Hamiltonianlift}. Furthermore, the prequantization of $T^{*}P_{L}$ descends to the same equivariant prequantizations $(U_{n}, D_{n})$ as constructed in Proposition \ref{liftedequivariantaction}. 

On the other hand, the structure maps of the groupoid are deformed. By construction, it follows that with respect to these new structures the action of $\mathbb{T}_{\mathbb{C}}$ is by groupoid automorphisms and its moment map $J$ is a groupoid homormorphism to the abelian group $\mathfrak{t}_{\mathbb{C}}^*$. A straightforward reduction of the given structure maps of $T^*P_L$ yields the deformed structure maps in the statement of the theorem. We explain here how to obtain the expression for the multiplicative cocycle on the line bundle.

On $T^{*}P_{L}$, the product of two elements $a \in \mathcal{O}|_{p}$ and $b \in  \mathcal{O}|_{q}$ is obtained by first transporting $a$ and $b$ to $p'=\exp(-\tfrac{1}{2}C\tilde J(p))p$ and $q'= \exp(\tfrac{1}{2}C\tilde J(q))q$, respectively (using the trivial equivariant structure, which does not affect the values of $a,b$), which places them in the same cotangent fibre, then taking their usual product, placed above the sum $p'+q'$ of cotangent vectors, and finally multiplying the result by the value of the cocycle~\eqref{multstructurebeforered}.   This operation descends to a similar operation on $\mathcal{U}$, but with nontrivial transport maps.  The usual product of $a,b$ descends to the tensor product, i.e. the original multiplicative structure $\Theta$ of Proposition \ref{multstruct1}. Therefore, we arrive at the expression in Equation \ref{BigRmatmultcocycle}.

The isomorphism of Equation \ref{equivpullbackiso} is constructed as in the proof of Proposition \ref{reductionprop}. Namely, there is a canonical section $s_{n}$ of the pullback of $L^{\otimes n}$ to $P_{L}$, and if we pull it back via the source map to $T^{*}P_{L}$, then we see that it reduces to $Z_{n}$, providing the desired isomorphism. Tracing through this construction, we see that this isomorphism differs from the one constructed in Proposition \ref{reductionprop} by the equivariant action of $\exp(- \tfrac{1}{2}CJ)$. 

In order to construct the submanifolds $\mathcal{L}_{n} \subset Z_{n}$, we start with the metric $h$ which is defined on $P_{L}$. The unit $S^{1}$-bundle $S$ defined by the equation $ h = 1$ is a coisotropic submanifold of $(P_{L}, \hat{Q})$. Hence, it integrates to a Lagrangian subgroupoid $C_{S}$ of $(T^{*}P_{L}, \Im(\Omega_{0}))$ (generalising the conormal bundle from Section \ref{reductionsection}). The submanifolds $\mathcal{L}_{n}$ are defined by reducing this subgroupoid:
\[
\mathcal{L}_{n} = (C_{S} \cap \mathcal{E}^{-1}(\tfrac{n}{2\pi}))/S^1. 
\]
In order to deduce the properties of $\mathcal{L}_{n}$, we use a different construction which is more convenient. Let 
\[
W_{h} \coloneqq \tfrac{1}{4 \pi} (\Im \Omega_{0})^{-1}(t^{*} d \log(h))
\]
be the Hamiltonian vector field of $\frac{1}{4 \pi} t^{*}\log(h)$. This vector field is tangent to the source fibres, is $t$-related to $\hat{W}$ (Equation~\ref{hatw}), is $\mathbb{C}^{*}$-invariant (since $d\log(h)$ is), and satisfies 
\[
W_{h}(\mathcal{E}) = \tfrac{1}{2 \pi},
\]
since $h$ is $S^1$-invariant and has weight $2$ with respect to the radial rescaling of the fibres of $P_{L}$. 
Let $\chi_{t}$ be the flow of $W_{h}$, let $\epsilon : P_{L} \to T^{*}P_{L}$ be the zero section, and let 
\[
\mathcal{L}(t) = \chi_{t} \circ \epsilon : P_{L} \to T^{*}P_{L}. 
\]
Then $\mathcal{L}(t)$ is a $\mathbb{C}^{*}$-equivariant section of $s$, which is $t$-related to the time $t$-flow $\hat{\varphi}_{t}$ of $\hat{W}$, and which has its image contained in $\mathcal{E}^{-1}(\frac{t}{2\pi})$. Hence, taking the quotient by $\mathbb{C}^{*}$, we obtain a sequence of maps 
\[
\mathcal{L}_{n} = \mathcal{L}(n)/\mathbb{C}^{*} : M \to Z_{n},
\]
which satisfy $s \circ \mathcal{L}_{n} = id$ and $t \circ \mathcal{L}_{n} = \varphi_{n}$. In fact, the image of these maps coincides with the submanifolds $\mathcal{L}_{n}$ considered above (hence the abuse of notation). To see this, note that $C_{S}$ is a groupoid which is diffeomorphic to $S \times \mathbb{R}$, and a generating section of its Lie algebroid is given by $W_{h}|_{S}$. Hence, the flow of $W_{h}$ defines the embedding of $C_{S}$ into $T^{*}P_{L}$, from which it follows that $\mathcal{L}(t)(S) = C_{S} \cap \mathcal{E}^{-1}(\frac{t}{2\pi})$.

The properties of $\mathcal{L}_{n}$ now follow from those of $W_{h}$. First, taking the Lie derivative of the moment map $\tilde{J}$, we obtain 
\begin{align*}
\mathcal{L}_{W_{h}}(\tilde{J}) &= d\tilde{J}(W_{h}) = \Omega_{0}(W_{h}, V') \\
&= \frac{i}{2 \pi} \langle \partial (t^{*}\log(h)), V' \rangle \\
&= \frac{i}{2 \pi} t^{*} (V(\log(h)))\\
&= -i t^{*}\mu,
\end{align*}
where $V'$ is the infinitesimal action of $\mathbb{T}_{\mathbb{C}}$ on $T^{*}P_{L}$, and the last line follows from Equation \ref{mommappot} and the relationship between $h$ and the $\mathbb{T}$-invariant K\"{a}hler potentials. From this the equation for $\mathcal{L}_{n}^*J_{n}$ follows. 

In order to compare $\mathcal{L}_{n}^{*}(U_{n}, D_{n})$ and $(L^{\otimes n}, \overline{\nabla^{\otimes n}})$, we pull them both back to $P_{L}$. When we do this, the identification between the line bundles induced by the isomorphism of Equation \ref{equivpullbackiso}, reduces to the canonical isomorphism $s_{n}$ between the trivial bundle and the pullback of $L^{\otimes n}$. This is because, as explained above, the isomorphism of Equation \ref{equivpullbackiso} may be constructed from $s_{n}$. The upshot is that the comparison is now reduced to the comparison of two connections on the trivial bundle over $P_{L}$. 

First, if we pullback $(L^{\otimes n}, \overline{\nabla^{\otimes n}})$ and use $s_{n}$ to trivialize it we obtain 
\[
d + \int_{0}^{n} \hat{\varphi}_{t}^{*}(\partial \log(h)) dt. 
\]
Second, if we pullback $\mathcal{L}_{n}^{*}(U_{n}, D_{n})$, we obtain the trivial bundle with the connection 
\begin{equation} \label{middlecalccompofcon}
d - 2 \pi i \mathcal{L}(n)^{*}(\alpha_{0}).
\end{equation}
To compute this, we take the Lie derivative of the tautological form $\alpha_{0}$: 
\[
\mathcal{L}_{W_{h}}(\alpha_{0}) = \iota_{W_{h}} \Omega_{0} + d \langle \alpha_{0}, W_{h} \rangle = \tfrac{i}{2\pi} t^* \partial \log(h) + d \langle \alpha_{0}, W_{h} \rangle. 
\]
By definition of $\alpha_{0}$, we have $\langle \alpha_{0}, W_{h} \rangle(p)  = i \langle p, \pi_{*}(W_{h}) \rangle$. Then using the fact that $\pi = s \circ \exp(\frac{1}{2}C\tilde{J})$, that $W_{h}$ points along the source fibres, and the above expression for the Lie derivative of $\tilde{J}$, we find
\[
\pi_{*}(W_h) = -\tfrac{i}{2} V \circ C \circ t^{*}\mu,
\]
where $V$ is the infinitesimal generator of the $\mathbb{T}_{\mathbb{C}}$-action on $P_{L}$. Assembling these facts, we find that $\langle \alpha_{0}, W_{h} \rangle = -\frac{i}{2}C(t^{*}\mu, \tilde{J})$. Therefore, we find that Equation \ref{middlecalccompofcon} is given by 
\[
d + \int_{0}^{n} \hat{\varphi}_{t}^{*}(\partial \log(h)) dt + dR_{n}, 
\]
where 
\[
R_{n} = i \pi \int_{0}^{n} \int_{0}^{t} C(\varphi_{t}^{*}\mu, \varphi_{s}^{*}\mu) ds dt.
\]
Hence, it follows that 
\[
\mathcal{L}_{n}^{*}(U_{n}, D_{n}) = e^{-R_{n}} \overline{\nabla^{\otimes n}} e^{R_{n}}. 
\]
The formula for $\mathcal{L}_{n}^*\Omega_{n}$ then follows by taking the curvature of the connections. 
\end{proof}

The result of Theorem \ref{deformedbrane} is a uniform construction, using symplectic reduction, of a symplectic groupoid starting from the data of the Poisson module $(L, h, \hat{\sigma}_{C})$. One of the upshots is an independent construction of the branes $B_{n}$ and the autoequivalence $T_{C}$ constructed in Section \ref{branessection} (note however that the data of the hermitian metric has been lost). In Appendix \ref{Comparisonappendix} we will explicitly construct an isomorphism between the prequantized groupoid of Theorem \ref{deformedbrane} and the groupoid sketched in Section \ref{algfromtwistedgrmon}. 

\subsection{The noncommutative homogeneous coordinate ring}
In this section we will identify the non-commutative algebra defined in Section \ref{algfromtwistedgrmon}. The derivation of this algebra closely follows Sections \ref{thequantization} and \ref{algebraconst}, with minor changes required to incorporate the non-trivial R-matrix. 

First, we compute the space of homomorphisms $\Hom{\mathcal{L}, \mathcal{B}_{n,0}}$, using the holomorphic moment map $J_{n}: Z_{n} \to \mathfrak{t}_{\mathbb{C}}^{*}$. Let $\mathcal{F}_{BS}(\mathcal{L}_{n},   \mathcal{B}_{n,0})$ denote the set of Bohr-Sommerfeld fibres of $J_{n}$ which have a non-trivial intersection with $\mathcal{L}_{n}$. Recall that the space of homomorphisms is defined to be the direct sum over $\mathcal{F}_{BS}(\mathcal{L}_{n},   \mathcal{B}_{n,0})$ of the vector spaces of covariantly constant sections of $U_{n}$ restricted to the Bohr-Sommerfeld fibres (c.f. Equation \ref{proposalforquant}): 
\[
\Hom{\mathcal{L}_{n}, \mathcal{B}_{n,0}}\coloneqq \bigoplus_{J_{n}^{-1}(\xi) \in \mathcal{F}_{BS}(\mathcal{L}_{n}, \mathcal{B}_{n,0})} H^{0}(J_{n}^{-1}(\xi), U_{n})^{D_{n}}. 
\]
Since the prequantized holomorphic integrable system $(Z_{n}, \Omega_{n}, U_{n}, D_{n}, J_{n})$ is unchanged from Section \ref{thequantization}, we may apply Proposition \ref{BSdetermination} to determine the Bohr-Sommerfeld fibres. However, since the Lagrangian $\mathcal{L}_{n}$ has been deformed, we must use Theorem \ref{deformedbrane} to check that $\mathcal{F}_{BS}(\mathcal{L}_{n},   \mathcal{B}_{n,0})$ is unchanged. Once this is done, we may apply Theorem \ref{QuantizationTheorem} to identify the space of homomorphisms with $H^{0}(M, L^{\otimes n})$. 

\begin{proposition}
Let $(Z_{n}, \Omega_{n}, U_{n}, D_{n}, J_{n}, \mathcal{L}_{n})$ be a component of the groupoid constructed in Theorem \ref{deformedbrane}. Then the fibre $J_{n}^{-1}(\xi)$ has non-trivial intersection with the brane $\mathcal{L}_{n}$ if and only if  $\xi \in -i n \Delta$, where $\Delta  = \mu(M)$ is the image of the real moment map $\mu$ (The Delzant polytope). Therefore, the space $\mathcal{F}_{BS}(\mathcal{L}_{n},   \mathcal{B}_{n,0})$ of Bohr-Sommerfeld fibres intersecting $\mathcal{L}_{n}$ is unchanged from Proposition \ref{BSdetermination}, and as a result, there is a canonical isomorphism 
\[
\Hom{\mathcal{L}_{n}, \mathcal{B}_{n,0}} \cong H^{0}(M, L^{\otimes n}). 
\]
\end{proposition}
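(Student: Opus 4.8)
The plan is to exploit the observation, stressed in the paragraph preceding the statement, that the prequantized holomorphic integrable system $(Z_n,\Omega_n,U_n,D_n,J_n)$ is literally unchanged from Section~\ref{thequantization} and that only the bisection $\mathcal{L}_n$ has been deformed. Hence Proposition~\ref{BSdetermination} applies verbatim to give that $J_n^{-1}(\xi)$ is Bohr--Sommerfeld exactly when $\xi\in\frac{1}{2\pi i}\Lambda^{*}$, and the whole problem collapses to deciding which fibres meet the \emph{deformed} $\mathcal{L}_n$. Since $J_n^{-1}(\xi)\cap\mathcal{L}_n\neq\varnothing$ iff $\xi$ lies in the image of $J_n|_{\mathcal{L}_n}$, and since Theorem~\ref{deformedbrane} computes this restriction (viewing $\mathcal{L}_n$ as a section of $s$) as
\[
\mathcal{L}_n^{*}J_n = -i\int_0^{n}\varphi_t^{*}(\mu)\,dt : M\to\mathfrak{t}_{\mathbb{C}}^{*},
\]
the claimed criterion $\xi\in -in\Delta$ is equivalent to showing that the image of this map is exactly $-in\Delta$.

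First I would record two structural properties of the flow $\varphi_t$ of $W=2\Im(VC\mu)$. Because $W$ is built from the torus generators with coefficients assembled from the $\mathbb{T}$-invariant moment map $\mu$, it is itself $\mathbb{T}$-invariant; thus each $\varphi_t$ commutes with the compact torus and descends, through the orbit identification $\mu:M/\mathbb{T}\xrightarrow{\ \cong\ }\Delta$, to a flow $\bar\varphi_t$ on $\Delta$ with $\mu\circ\varphi_t=\bar\varphi_t\circ\mu$. Moreover $W$ is everywhere tangent to the $\mathbb{T}_{\mathbb{C}}$-orbits (its real and imaginary parts span the real tangent spaces of the orbits), hence tangent to every toric stratum, so $\varphi_t$ preserves each stratum and $\bar\varphi_t$ preserves every face of $\Delta$; in particular the vertices, being images of fixed points where $W$ vanishes, are fixed. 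Setting $\psi:=\tfrac1n\int_0^{n}\bar\varphi_t\,dt:\Delta\to\mathfrak{t}^{*}$, the displayed map factors as $\mathcal{L}_n^{*}J_n=-in\,\psi\circ\mu$, so it suffices to prove $\psi(\Delta)=\Delta$.

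The two inclusions come from convexity and a degree argument. Since each $\bar\varphi_t$ sends a (convex) face $F$ into itself and $\psi(x)$ is an average of the points $\bar\varphi_t(x)\in F$, we get $\psi(F)\subseteq F$ for every face, and in particular $\psi(\Delta)\subseteq\Delta$. For surjectivity I would induct on the dimension of faces: on each face $F$ the restriction $\psi|_F:F\to F$ again preserves the faces of $F$, and the straight-line homotopy $H(x,\tau)=(1-\tau)x+\tau\psi(x)$ stays in $F$ (convexity) and restricts to a homotopy of $\partial F$, so $\psi|_F\simeq\mathrm{id}_F$ as maps of the pair $(F,\partial F)$. Thus $\psi|_F$ has degree one and is onto $F$ (if it omitted an interior point it would factor through $F$ minus a point, whose top relative homology vanishes, contradicting degree one). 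Running this from the vertices up to $\Delta$ itself yields $\psi(\Delta)=\Delta$, boundary included, and hence the image of $\mathcal{L}_n^{*}J_n$ is precisely $-in\Delta$. Combined with the Bohr--Sommerfeld condition $\xi\in\frac{1}{2\pi i}\Lambda^{*}$, the fibres intersecting $\mathcal{L}_n$ are indexed by $\lambda\in\Lambda^{*}\cap 2\pi n\Delta$, exactly as in Proposition~\ref{BSdetermination}, and the asserted isomorphism $\Hom{\mathcal{L}_n,\mathcal{B}_{n,0}}\cong H^{0}(M,L^{\otimes n})$ then follows directly from Theorem~\ref{QuantizationTheorem}.

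I expect the genuine obstacle to be the surjectivity $\psi(\Delta)=\Delta$. The naive hope $\varphi_t^{*}\mu=\mu$ is false: a short computation gives $\mathcal{L}_W\mu\neq 0$ as soon as the $R$-matrix has nonzero imaginary part, so the equality of images is not a pointwise identity and really requires the global topological input above. The supporting facts that make the degree argument go through — $\mathbb{T}$-invariance of $W$ (so that $\bar\varphi_t$ and $\psi$ exist) and tangency of $W$ to all toric strata (so that faces, and in particular the boundary lattice points of $n\Delta$, are respected) — are the two claims I would verify most carefully.
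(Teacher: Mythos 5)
Your proposal is correct and takes essentially the same route as the paper's proof: both use Theorem~\ref{deformedbrane} to reduce the statement to showing that the image of $-i\int_0^n\varphi_t^{*}(\mu)\,dt$ is exactly $-in\Delta$, then exploit the $\mathbb{T}$-invariance of $W$ and its tangency to the $\mathbb{T}_{\mathbb{C}}$-orbits to obtain a face-preserving self-map of the polytope, with convexity (equivalently, the paper's half-plane inequalities) giving the inclusion of the image into $-in\Delta$. The one place you go beyond the paper is surjectivity: the paper simply asserts that a face-preserving self-map of $\Delta$ is onto, whereas you justify this with the straight-line homotopy, degree-one, induction-over-faces argument --- a welcome filling-in of detail rather than a genuinely different approach.
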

\begin{proof}
By Theorem \ref{deformedbrane}, we know that 
\[
\mathcal{L}_{n}^*J_{n} = -i \int_{0}^{n}\varphi_{t}^{*}(\mu) dt,
\]
where $\varphi_{t}$ is the flow of the vector field $W$. Hence, we need only check that the image of this map is $-in \Delta$. The polytope $\Delta \subset \mathfrak{t}^{*}$ is a compact convex simple polytope, which is defined by intersecting half-planes:
\[
\Delta = \{ \xi \in \mathfrak{t}^{*} \ | \ \langle v_{i} , \xi \rangle \geq \lambda_{i}, i = 1, ... , d \},
\]
where $v_{i} \in \mathfrak{t}$. The faces of $\Delta$ are given by various sets of equalities $\langle v_{i} , \xi \rangle = \lambda_{i}$, and they correspond in turn to the orbits of the $\mathbb{T}_{C}$-action. Since the vector field $W$ is constructed from this action, it therefore preserves the orbits and hence the various faces of $\Delta$. In other words, the flow $\varphi_{t}$ preserves both equalities and inequalities. Now given a point $x \in M$, we obtain 
\[
\langle v_{i} ,  \int_{0}^{n}\mu(\varphi_{t}(x)) dt \rangle = \int_{0}^{n} \langle v_{i} , \mu(\varphi_{t}(x))  \rangle dt \geq \int_{0}^{n} \lambda_{i} dx = n\lambda_{i}, 
\]
implying that the image of $\mathcal{L}_{n}^*J_{n}$ is contained in $ -i n \Delta$. Furthermore, each $\mathbb{T}_{\mathbb{C}}$-orbit must map into the corresponding face of $ -i n \Delta$. Now the vector $W$ commutes with the $\mathbb{T}$-action, implying that the map $\mathcal{L}_{n}^*J_{n}$ factors through the quotient $M/\mathbb{T} \cong \Delta$. The upshot is that we have a map $\Delta \to  -i n \Delta$ which preserves faces. Hence this map is surjective. 
\end{proof}

The construction of the product on the algebra now follows as in Section \ref{algebraconst}. Namely, consider two elements of the space of homomorphisms, i.e. flat sections $f_{n, \xi}$ and $f_{m, \eta}$ of $U_{n}$ and $U_{m}$, respectively, which are supported on Bohr-Sommerfeld fibres $J_n^{-1}(\xi)$ and $J_m^{-1}(\eta)$, respectively. We then use the multiplicative structure $\Theta_{C}$, which was constructed in Theorem \ref{deformedbrane}, to multiply them, yielding a flat section 
\[
\Theta_{C}(f_{n, \xi}, f_{m, \eta}) 
\]
of $U_{n+m}$, supported on the product of the Bohr-Sommerfeld fibres, which is $J_{n+m}^{-1}(\xi+\eta)$. We now identify this algebra with a deformation of the homogeneous coordinate ring.

\begin{theorem}
The algebra $\mathcal{A}_{C}$ and the homogeneous coordinate ring of the toric variety have canonically isomorphic underlying graded vector spaces 
\[
\mathcal{A}_{C} \cong \bigoplus_{n \geq 0} H^{0}(M, L^{\otimes n}). 
\]
Under this identification the product of two homogeneous sections $f$ and $g$, with respective $\TT_\CC$-weights $w_{1}, w_{2}\in \mathfrak{t}_\CC^*$, is given by 
\[
f \ast g = e^{\frac{i}{4 \pi} C(w_{1}, w_{2})} f \otimes g,
\]
resulting in the commutation relations
\[
f\ast g = e^{\frac{i}{2 \pi} C(w_{1}, w_{2})}g\ast f. 
\]
\end{theorem}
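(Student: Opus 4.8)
The plan is to reduce everything to the explicit description of the deformed multiplicative cocycle $\Theta_{C}$ furnished by Theorem \ref{deformedbrane}, and then to track a small number of exponential factors. For the graded vector space statement I would combine the preceding proposition with Theorem \ref{QuantizationTheorem}: the degree-$n$ piece $\Hom{\mathcal{L}, \mathcal{B}_{n,0}}$ is, by Proposition \ref{BSdetermination}, the direct sum over the Bohr-Sommerfeld fibres $J_{n}^{-1}(\xi)$ with $\xi \in \tfrac{1}{2\pi i}\Lambda^{*}\cap(-in\Delta)$ of the one-dimensional spaces of flat sections, and each such fibre corresponds to the weight $\lambda = 2\pi i\xi \in \Lambda^{*}\cap 2\pi n\Delta$ of $H^{0}(M, L^{\otimes n})$. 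Fixing homogeneous generators $f = f_{n,\xi}$ and $g = g_{m,\eta}$ of $\mathbb{T}_{\mathbb{C}}$-weights $w_{1} = 2\pi i\xi$ and $w_{2} = 2\pi i\eta$ then reduces the whole theorem to computing a single product.

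For the product I would feed these two flat sections into the cocycle formula \eqref{BigRmatmultcocycle}. On their supports $J(x) = \xi$ and $J(y) = \eta$, so $\Theta_{C}$ splits into three contributions: the scalar prefactor $e^{i\pi C(\xi,\eta)}$; the two transport operators $e^{-\frac{1}{2}CJ(y)} = e^{-\frac{1}{2}C\eta}$ and $e^{\frac{1}{2}CJ(x)} = e^{\frac{1}{2}C\xi}$ acting on the weight vectors $f(x)$ and $g(y)$ through the $\mathbb{T}_{\mathbb{C}}$-equivariant structure of Proposition \ref{liftedequivariantaction}; and the undeformed multiplicative structure $\Theta$ of Proposition \ref{multstruct1}, which under $U_{n}\cong \pi^{*}L^{\otimes n}$ is just the canonical trivialization, i.e. the ordinary tensor product $f\otimes g$. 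Since $J$ is a groupoid homomorphism, the output is a flat section supported on $J_{n+m}^{-1}(\xi+\eta)$, hence again a homogeneous generator, now of weight $w_{1}+w_{2}$.

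It then remains to evaluate the three exponentials. Using $\xi = w_{1}/2\pi i$, $\eta = w_{2}/2\pi i$ and the antisymmetry of $C\in\wedge^{2}\mathfrak{t}_{\mathbb{C}}$, the prefactor becomes $e^{-\frac{i}{4\pi}C(w_{1},w_{2})}$, while each transport factor — the equivariant action of $e^{\mp\frac{1}{2}CJ}$ on a vector of weight $w_{1}$, resp. $w_{2}$ — contributes $e^{+\frac{i}{4\pi}C(w_{1},w_{2})}$. Collecting, the net scalar is $e^{-\frac{i}{4\pi}C(w_{1},w_{2})}\,e^{\frac{i}{4\pi}C(w_{1},w_{2})}\,e^{\frac{i}{4\pi}C(w_{1},w_{2})} = e^{\frac{i}{4\pi}C(w_{1},w_{2})}$, giving $f\ast g = e^{\frac{i}{4\pi}C(w_{1},w_{2})}\,f\otimes g$. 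The commutation relation follows formally: exchanging $f$ and $g$ swaps the weights, antisymmetry gives $C(w_{2},w_{1}) = -C(w_{1},w_{2})$, and since the tensor product of holomorphic sections is commutative one obtains $f\ast g = e^{\frac{i}{2\pi}C(w_{1},w_{2})}\,g\ast f$.

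I expect the main obstacle to be the precise bookkeeping of the transport factors. One must pin down the sign convention for how $e^{\mp\frac{1}{2}CJ}$ acts through the equivariant structure on a weight vector, and confirm that both transports contribute with the \emph{same} sign so that they reinforce rather than cancel; this is the only place a convention could flip the exponent, and it is exactly what is balanced against the $e^{i\pi C(\xi,\eta)}$ prefactor. Getting it right is what produces the coefficient $\tfrac{1}{4}$ in the exponent rather than $\tfrac{1}{2}$ or $0$. Once the three factors are correctly assembled, the remaining verifications — that $\Theta_{C}$ preserves flatness and lands on the Bohr-Sommerfeld fibre $J_{n+m}^{-1}(\xi+\eta)$ — are routine consequences of the homomorphism property of $J$.
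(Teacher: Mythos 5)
Your proposal is correct and takes essentially the same route as the paper's own proof: identify the graded pieces via the preceding proposition together with Theorem \ref{QuantizationTheorem}, then evaluate the deformed cocycle of Equation \ref{BigRmatmultcocycle} on flat sections supported on Bohr--Sommerfeld fibres, with the prefactor contributing $e^{-\frac{i}{4\pi}C(w_{1},w_{2})}$ and each equivariant transport contributing $e^{+\frac{i}{4\pi}C(w_{1},w_{2})}$. Your bookkeeping agrees with the paper's computation, which is carried out in the variables $\xi,\eta$ using the equivariant sections $s_{n,\xi}$: the factors $e^{i\pi C(\xi,\eta)}$, $e^{i\pi C(\eta,\xi)}$ and $e^{-i\pi C(\xi,\eta)}$ combine to $e^{-i\pi C(\xi,\eta)} = e^{\frac{i}{4\pi}C(w_{1},w_{2})}$, exactly matching your reinforcing-transports resolution of the sign question you raised.
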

\begin{proof}
Let $\xi \in \mathfrak{t}^{*}_{\mathbb{C}}$ such that $J_{n}^{-1}(\xi) \in \mathcal{F}_{BS}(\mathcal{L}_{n}, \mathcal{B}_{n,0})$. As in Section \ref{thequantization}, let $s_{n,\xi}: M^{o} \to Z_{n}$ be the equivariant section of the projection $\pi$. This map satisfies 
\[
t\circ s_{n, \xi}(x) = e^{\frac{1}{2}C\xi}x, \qquad s\circ s_{n, \xi}(x) = e^{-\frac{1}{2}C\xi}x.
\]
The groupoid product of two of these maps satisfies 
\[
m(s_{\xi, n}(e^{\frac{1}{2}C\eta}x), s_{\eta, m}(e^{-\frac{1}{2}C\xi}x)) = s_{\xi + \eta, n + m}(x).
\]
Recall that $s^{*}_{n, \xi}(U_{n}) \cong L^{\otimes n}|_{M^{0}}$, and this defines an isomorphism from $H^{0}(J_{n}^{-1}(\xi), U_{n})^{D_{n}}$ to the weight $2\pi i \xi$ subspace of $H^{0}(M^{o}, L^{\otimes n})$. Now consider two morphisms 
\[
f_{n, \xi} \in \Hom{\mathcal{L}_{n}, \mathcal{B}_{n,0}}, \qquad f_{m , \eta} \in \Hom{\mathcal{L}_{m}, \mathcal{B}_{m,0}}. 
\]
Pulling these back via the respective isomorphisms defined by $s_{n, \xi}$ and $s_{m, \eta}$ allows us to view these as sections of $L^{\otimes n}$ and $L^{\otimes m}$. We suppress this identification from the notation. 

Their product is an element of $\Hom{\mathcal{L}_{n+m}, \mathcal{B}_{n+m,0}}$, which is given by 
\[
\Theta_{C}(f_{n, \xi}\boxtimes f_{m, \eta})(x) = e^{i \pi C(\xi, \eta)} \Theta( e^{-\frac{1}{2}C\eta} f_{n, \xi}(e^{\frac{1}{2}C\eta}x), e^{\frac{1}{2}C\xi} f_{m, \eta}(e^{-\frac{1}{2}C\xi}x)).
\]
Pulling this back via the isomorphism defined by $s_{\xi + \eta, n + m}$ allows us to view it as a section of $L^{\otimes (n + m)}$. Using the fact that $f_{n, \xi}$ and $f_{m, \eta}$ are equivariant sections of respective weights $2\pi i \xi$ and $2\pi i \eta$, we may simplify the above expression to 
\begin{align*}
\Theta_{C}(f_{n, \xi}\boxtimes f_{m, \eta}) &=  e^{i \pi C(\xi, \eta)}  (e^{\pi i C(\eta, \xi)}  f_{n, \xi} \otimes e^{-\pi i C(\xi, \eta)}f_{m, \eta}) \\
&= e^{-i \pi C(\xi, \eta)} f_{n, \xi}\otimes f_{m, \eta}. 
\end{align*}
\end{proof}

\appendix
\section{Comparison of groupoids} \label{Comparisonappendix}
In this appendix we compare the two constructions of prequantized (graded) holomorphic symplectic groupoids integrating the toric Poisson structure $\sigma_{C}$: the groupoid whose construction was sketched at the end of Section \ref{XuConstruction}, and the groupoid constructed in Theorem \ref{deformedbrane}. 

\subsection{Magnetic deformation construction} \label{appmagdef}
We first carefully flesh out the construction of the groupoid outlined in Section \ref{XuConstruction}. The starting data is the (prequantized) source simply connected holomorphic symplectic groupoid $\mathcal{G}_{C}$ integrating $\sigma_{C}$. Recall that as a holomorphic symplectic manifold, it is given by the holomorphic cotangent bundle $(T^{*}M, \Omega_{0})$, where $\Omega_{0}$ is the holomorphic symplectic form whose imaginary part is the real canonical symplectic form. The target and source maps are given, respectively, by 
\[
t(z) = e^{\frac{1}{2}CJ_{0}(z)} \pi(z), \qquad s(z) = e^{-\frac{1}{2}CJ_{0}(z)} \pi(z), 
\]
and the deformed multiplication is given by 
\begin{equation*}
m(x,y) = e^{- \frac{1}{2}CJ_{0}(y)} x + e^{\frac{1}{2}CJ_{0}(x)}  y,
\end{equation*}
where $J_{0} : T^{*}M \to \mathfrak{t}_{\mathbb{C}}^{*}$ is the holomorphic moment map generating the cotangent lift of the action of $\mathbb{T}_{\mathbb{C}}$. It is defined, for $\alpha \in T^{*}M$ and $u \in \mathfrak{t}_{\mathbb{C}}$ by 
\[
\langle J_{0}(\alpha), u \rangle = i \langle \alpha, V_{u} \rangle, 
\]
where $V_{u}$ is the toric vector field corresponding to $u$. 

This symplectic groupoid has a multiplicative prequantization given by the trivial line bundle with connection $d - 2 \pi i \alpha_{0}$, where $\alpha_{0}$ is the canonical primitive of $\Omega_{0}$, and mutliplicative cocycle defined on the space of composable pairs of arrows by 
\[
\theta_{C}(x,y) = e^{i \pi C(J_{0}(x), J_{0}(y))}.
\]

Now consider the autoequivalence $T_{C}$ and the sequence of branes 
\[
B_{n} \coloneqq T^{n}_{C}(B_{0}) = (L^{\otimes n}, \overline{ \nabla^{\otimes n}}),
\]
constructed in Section \ref{branessection}. The curvature of $ \overline{ \nabla^{\otimes n}}$ is given by $-2\pi i F_{n}$. Using this data, we will construct a graded prequantized holomorphic symplectic groupoid. The following is based on the constructions of \cite{bailey2016integration,bischoff2018morita, bischoffthesis}.

\begin{enumerate}
\item Consider the open cover of $M$ consisting of a $\mathbb{Z}$-indexed family of copies of $M$: $\{ M_{i} \}_{i \in \mathbb{Z}}$. We \emph{localize} the groupoid $\mathcal{G}_{C}$ to this cover. The result is a groupoid 
\[
\mathcal{G}_{C,\mathbb{Z}} = \bigsqcup_{(i,j) \in \mathbb{Z}^2} \mathcal{G}_{C, i,j},
\]
defined over the disjoint union $M_{\mathbb{Z}} = \sqcup_{i \in \mathbb{Z}} M_{i}$. The groupoid structures are defined as above, but the arrows in $\mathcal{G}_{C, i,j}$ go from $M_{j}$ to $M_{i}$. As a result, it is clear that $\mathcal{G}_{C,\mathbb{Z}}$ is a prequantized holomorphic symplectic groupoid integrating the holomorphic Poisson structure consisting of $\sigma_{C}$ on each copy $M_{i}$. 

\item Consider the real closed $2$-form $F_{\mathbb{Z}}$ on $M_{\mathbb{Z}}$ defined to be $F_{i}$ on $M_{i}$. Recall that this form satisfies Equation \ref{starequation}. Therefore, by \cite[Proposition 6.3]{bailey2016integration}, the deformed complex form $\Omega_{0} + t^{*}F_{\mathbb{Z}} - s^{*}F_{\mathbb{Z}}$ defines a new holomorphic symplectic structure on $\mathcal{G}_{C,\mathbb{Z}}$, such that it becomes a holomorphic symplectic groupoid over the manifold $M_{\mathbb{Z}}$, now equipped with the holomorphic Poisson structure $(I_{i}, \sigma_{i})$ on $M_{i}$. Note that the holomorphic symplectic form on $\mathcal{G}_{C, i,j}$ is given by 
\[
\Omega_{i,j} = \Omega_{0} + t^{*}F_{i} - s^{*}F_{j}. 
\]
Denote the new groupoid $\mathcal{Z}_{C, \mathbb{Z}} = \sqcup_{(i,j) \in \mathbb{Z}^2} Z_{i,j}$. 

\item The $2$-form $F_{\mathbb{Z}}$ is prequantized by the line bundle with connection $(A, \nabla_{A})$ which consists of $(L^{\otimes i}, \overline{ \nabla^{\otimes i}})$ on $M_{i}$. By \cite[Theorem 4.4.5]{bischoffthesis}, this leads to a holomorphic multiplicative prequantization of $\mathcal{Z}_{C, \mathbb{Z}}$ given by 
\[
(\mathcal{U}_{C, \mathbb{Z}}, D_{C, \mathbb{Z}}) = t^{*}(A, \nabla_{A}) \otimes s^{*}(A, \nabla_{A})^{*} \otimes (\mathcal{O}, d - 2\pi i \alpha_{0}). 
\] 
Indeed, the component over $Z_{i,j}$ is given by 
\[
(U_{i,j}, D_{i,j}) = (t^{*}(L^{\otimes i})\otimes s^{*}(L^{\otimes j})^{*}, t^{*}(\overline{ \nabla^{\otimes i}}) \otimes s^{*}(\overline{ \nabla^{\otimes j}})^{*} - 2\pi i \alpha_{0}),
\]
which is a complex line bundle with connection whose curvature equals $-2\pi i \Omega_{i,j}$. As a result, $D_{i,j}^{(0,1)}$ defines a holomorphic structure with respect to which $(U_{i,j}, D_{i,j})$ defines a holomorphic prequantization of $(Z_{i,j}, \Omega_{i,j})$. In order to construct a multiplicative structure for $(\mathcal{U}_{C, \mathbb{Z}}, D_{C, \mathbb{Z}})$, we need a flat $\delta$-closed trivialization of $\delta(\mathcal{U}_{C, \mathbb{Z}}, D_{C, \mathbb{Z}})$ over the space of composable pairs of arrows $\mathcal{Z}_{C, \mathbb{Z}}^{(2)}$. But we have the canonical identification 
\begin{align*}
\delta(\mathcal{U}_{C, \mathbb{Z}}, D_{C, \mathbb{Z}}) &= \delta(t^{*}(A, \nabla_{A}) \otimes s^{*}(A, \nabla_{A})^{*}) \otimes (\mathcal{O}, d - 2\pi i \delta(\alpha_{0})) \\
&= (\mathcal{O}, d - 2\pi i \delta(\alpha_{0})),
\end{align*}
so that a flat multiplicative prequantization is provided by $\theta_{C}$. We may view the complement of the zero section in $\mathcal{U}_{C, \mathbb{Z}}$ as a groupoid over $M_{\mathbb{Z}}$. It is a central extension of $\mathcal{Z}_{C, \mathbb{Z}}$. 

\item  Let $\mathcal{L} \subset \mathcal{G}_{C}$ be the identity bisection, and let $\mathcal{L}_{i,j}$ denote this bisection when it is viewed as a (non-holomorphic) submanifold of $Z_{i,j}$. Then $\mathcal{L}_{i,j}$ is a section of both the source and target, it defines the identification of $M_{i}$ and $M_{j}$ as smooth manifolds, and we have 
\[
\Omega_{i,j}|_{\mathcal{L}_{i,j}} = F_{i} - F_{j}.
\]
Furthermore, $\mathcal{L}_{i,j} \ast \mathcal{L}_{j,k} = \mathcal{L}_{i,k}$. 

\item Consider the $Q$-Poisson diffeomorphism $\varphi_{1}$ underlying $T_{C}$. By Equation \ref{newcomplexstructures}, $\varphi_{1}$ defines a holomorphic Poisson isomorphism from $(M, I_{i+1}, \sigma_{i+1})$ to $(M, I_{i}, \sigma_{i})$. We thus view it as a holomorphic Poisson automorphism of $M_{\mathbb{Z}}$. As explained in Section \ref{XuConstruction}, $\varphi_{1}$ integrates to a symplectic groupoid automorphism $\Phi$ of the groupoid $(T^{*}M, \Im(\Omega_{0}))$ integrating $Q$. We may view it as a (at the moment only $C^{\infty}$) groupoid automomorphism of $\mathcal{G}_{C, \mathbb{Z}}$ covering $\varphi_{1}$. It is defined by mapping $\mathcal{G}_{C, i,j}$ to $\mathcal{G}_{C, i-1,j-1}$. But now recall from Lemma \ref{braneautolift} that $\Phi^{*}(\Omega_{i,j}) = \Omega_{i+1, j+1}$. Hence, $\Phi$ defines a holomorphic symplectic groupoid automorphism of $\mathcal{Z}_{C, \mathbb{Z}}$. This map satisfies $\Phi(\mathcal{L}_{i,j}) = \mathcal{L}_{i-1,j-1}$. 

\item The automorphism $\Phi$ may be lifted to a flat multiplicative automorphism $\hat{\Phi}$ of $\mathcal{U}_{C, \mathbb{Z}}$. First, for every $(i,j) \in \mathbb{Z}^2$, we need a flat isomorphism  
\[
\hat{\Phi}_{i,j} : (U_{i+1, j+1}, D_{i+1, j+1}) \to \Phi^{*}(U_{i,j}, D_{i,j}),
\]
over $Z_{i+1, j+1}$. Since both connections have the same curvature we may first construct the isomorphism along $\mathcal{L}_{i+1, j+1}$, and then integrate it to all $Z_{i+1, j+1}$. Restricting the line bundles, we get 
\begin{align}
(U_{i+1, j+1}, D_{i+1, j+1})|_{\mathcal{L}_{i+1, j+1}} &= (L^{\otimes i+1} \otimes (L^{\otimes j+1})^{*}, \overline{ \nabla^{\otimes i+1}} \otimes (\overline{ \nabla^{\otimes j+1}})^{*}) \label{LB1} \\
 \Phi^{*}(U_{i,j}, D_{i,j})|_{\mathcal{L}_{i+1, j+1}} &= \varphi_{1}^{*}(L^{\otimes i} \otimes (L^{\otimes j})^{*}, \overline{ \nabla^{\otimes i}} \otimes (\overline{ \nabla^{\otimes j}})^{*}). \label{LB2}
\end{align}

Recall from Section \ref{branessection} that 
\[
(L^{\otimes n+1}, \overline{ \nabla^{\otimes n+1}}) \cong T_{C}(L^{\otimes n}, \overline{ \nabla^{\otimes n}}) = (L, \overline{ \nabla}) \otimes \varphi_{1}^{*}(L^{\otimes n}, \overline{ \nabla^{\otimes n}}).
\]
To be more precise, the isomorphism from the left hand side to the right hand side is given by $id \otimes \hat{\varphi}_{1}^{\otimes n}$, where $\hat{\varphi}_{1}$ is the time-$1$ flow of $\hat{W}$ on $L$. Therefore, we see that $(id \otimes \hat{\varphi}_{1}^{\otimes i}) \otimes (id \otimes \hat{\varphi}_{1}^{\otimes j})^{*}$ takes us from \ref{LB1} to \ref{LB2}. On the level of line bundles, this is the morphism 
\[
\hat{\varphi}_{1}^{\otimes(i-j)} : L^{\otimes i-j} \to \varphi_{1}^{*}(L^{\otimes i-j}). 
\]
Extending this to a flat isomorphism over $Z_{i+1, j+1}$ we obtain $\hat{\Phi}_{i,j}$. In order to check that the resulting map is a homomorphism, we need to establish that $\hat{\Phi}^{*}(\theta_{C}) = \theta_{C}$. Because the cocycle is flat, it suffices to check equality on $\mathcal{L}_{i+1,j+1} \times_{M} \mathcal{L}_{j+1, k+1}$ (for all $(i,j,k) \in \mathbb{Z}^3$). The product on this submanifold is given by the pairing of dual line bundles. For example,
\[
(L^{\otimes i+1} \otimes (L^{\otimes j+1})^{*} )\otimes (L^{\otimes j+1} \otimes (L^{\otimes k+1})^{*} ) \to (L^{\otimes i+1} \otimes (L^{\otimes k+1})^{*} ).
\]
It is now straightforward to check that $\hat{\Phi}$ is multiplicative. 

\item Finally, we define the structure of a prequantized holomorphic symplectic groupoid on the disjoint union 
\[
\bigsqcup_{i \in \mathbb{Z}} (Z_{i,0}, \Omega_{i,0}, U_{i,0}, D_{i,0}). 
\]
The source map is defined as before, and is a holomorphic Poisson morphism to $-\sigma_{C}$. We modify the target map as follows. On $Z_{i,0}$ we define the new target map to be $\varphi_{1}^{i} \circ t$. Since $t$ is holomorphic Poisson to $\sigma_{i}$, the new target map is holomorphic Poisson to $\sigma_{0}$. We represent a point of $Z_{i,0}$ as $(i, g)$. The product on this groupoid is defined by
\[
\tilde{m}( (j,g), (i,h) ) = (j+ i, m(\Phi^{-i}(g), h)). 
\]
It is straightforward to check, using the properties established above of $\mathcal{Z}_{C,\mathbb{Z}}$ and $\Phi$, that this defines the structure of a holomorphic symplectic groupoid over $(M, \sigma_{C})$. A similar formula involving $\hat{\Phi}$ defines a multiplicative prequantization. Finally, the submanifolds $\mathcal{L}_{i,0}$ satisfy $\mathcal{L}_{i,0} \ast \mathcal{L}_{j,0} = \mathcal{L}_{i+j,0}$ with respect to the new multiplication. 
\end{enumerate}

\subsection{Comparison with reduction construction}
In this section we construct an isomorphism between the groupoid constructed in~\ref{appmagdef} and the one constructed in Theorem \ref{deformedbrane}. Recall that the latter groupoid has the decomposition 
\[
(\mathcal{Z}, \Omega) := \sqcup_{n \in \mathbb{\mathbb{Z}}} (Z_{n}, \Omega_{n}),
\]
and each component contains a bisection $\mathcal{L}_{n} \subset Z_{n}$, which we view as a section of the source map. The groupoid in `degree 0' is isomorphic to the symplectic groupoid of $\sigma_{C}$ 
\[
(Z_{0}, \Omega_{0}) \cong (\mathcal{G}_{C}, \Omega_{0}).
\]
Therefore, the groupoid multiplication defines an action of $ (\mathcal{G}_{C}, \Omega_{0})$ on each $(Z_{n}, \Omega_{n})$. We now use this action and the bisection $\mathcal{L}_{n}$ to define a diffeomorphism 
\[
T_{n} : \mathcal{G}_{C} \to Z_{n}, \qquad g \mapsto m(\mathcal{L}_{n}(t(g)), g). 
\]

\begin{lemma}
The map $T_{n}$ satisfies the following equations
\[
t \circ T_{n} = \varphi_{1}^n \circ t, \qquad s \circ T_{n} = s, \qquad T_{n}^{*}\Omega_{n} = \Omega_{0} + t^{*}F_{n}. 
\]
\end{lemma}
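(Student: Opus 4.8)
The plan is to derive all three identities from the groupoid axioms of $(\mathcal{Z}, \Omega)$ together with the restriction formulas for $\mathcal{L}_{n}$ already established in Theorem \ref{deformedbrane}. Write $T_{n} = m \circ \psi$, where $\psi : \mathcal{G}_{C} \to Z_{n} \times_{M} Z_{0}$ is the map $g \mapsto (\mathcal{L}_{n}(t(g)), g)$. The first thing to check is that $\psi$ genuinely lands in the space of composable arrows: since $\mathcal{L}_{n}$ is a section of the source map, $s(\mathcal{L}_{n}(t(g))) = t(g)$, which is exactly the composability condition $s(x) = t(y)$ for the product $m(x,y)$. Hence $T_{n}$ is well defined, and because the second factor lies in degree $0$ the product lands in $Z_{n+0} = Z_{n}$.

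The two structural identities are then immediate from the groupoid axioms $t(m(x,y)) = t(x)$ and $s(m(x,y)) = s(y)$. For the source, $s(T_{n}(g)) = s(m(\mathcal{L}_{n}(t(g)), g)) = s(g)$, giving $s \circ T_{n} = s$. For the target, $t(T_{n}(g)) = t(\mathcal{L}_{n}(t(g)))$; invoking the identity $t \circ \mathcal{L}_{n} = \varphi_{n}$ from Theorem \ref{deformedbrane} together with the fact that the flows of $W$ form a one-parameter group, so that $\varphi_{n} = \varphi_{1}^{n}$, this equals $\varphi_{1}^{n}(t(g))$, as claimed.

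For the symplectic form the key tool is multiplicativity of $\Omega$: since $(\mathcal{Z}, \Omega)$ is a symplectic groupoid, one has $m^{*}\Omega = \mathrm{pr}_{1}^{*}\Omega + \mathrm{pr}_{2}^{*}\Omega$ on the space of composable arrows, where $\mathrm{pr}_{1}, \mathrm{pr}_{2}$ are the projections to $Z_{n}$ and $Z_{0}$. Pulling back along $\psi$ gives
\[
T_{n}^{*}\Omega_{n} = \psi^{*} m^{*} \Omega = (\mathrm{pr}_{1} \circ \psi)^{*}\Omega + (\mathrm{pr}_{2} \circ \psi)^{*}\Omega.
\]
Here $\mathrm{pr}_{2} \circ \psi = \mathrm{id}_{\mathcal{G}_{C}}$ contributes $\Omega_{0}$, while $\mathrm{pr}_{1} \circ \psi = \mathcal{L}_{n} \circ t$ contributes $t^{*}(\mathcal{L}_{n}^{*}\Omega_{n})$. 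Substituting the restriction formula $\mathcal{L}_{n}^{*}\Omega_{n} = F_{n}$ from Theorem \ref{deformedbrane} yields $T_{n}^{*}\Omega_{n} = \Omega_{0} + t^{*}F_{n}$.

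The only genuine subtlety I anticipate is bookkeeping rather than substance: one must confirm that the multiplication entering the definition of $T_{n}$ is the honest (untwisted) groupoid product $Z_{n} \times_{M} Z_{0} \to Z_{n}$ of the reduction-model groupoid of Theorem \ref{deformedbrane}, for which $\Omega$ is multiplicative by construction, rather than the $\Phi$-twisted product of Section \ref{algfromtwistedgrmon}; since the second factor sits in degree $0$, the twisting operator $\Phi^{0} = \mathrm{id}$ is trivial and no twist intervenes. With that checked, the remaining content is entirely formal, resting only on the multiplicativity of $\Omega$ and the already-computed pullback of $\Omega_{n}$ to the bisection $\mathcal{L}_{n}$.
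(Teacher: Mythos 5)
Your proof is correct and follows essentially the same route as the paper: the first two identities come directly from the groupoid axioms together with $t \circ \mathcal{L}_{n} = \varphi_{n} = \varphi_{1}^{n}$, and the pullback formula $T_{n}^{*}\Omega_{n} = \Omega_{0} + t^{*}\mathcal{L}_{n}^{*}\Omega_{n} = \Omega_{0} + t^{*}F_{n}$ is obtained from multiplicativity of $\Omega$, which the paper phrases equivalently as the graph of the action being Lagrangian in $(Z_{n}, -\Omega_{n}) \times (\mathcal{G}_{C}, -\Omega_{0}) \times (Z_{n}, \Omega_{n})$. Your additional checks (composability of $\psi$, and that the untwisted product $m$ of Theorem \ref{deformedbrane} is the one being used) are correct bookkeeping that the paper leaves implicit.
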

\begin{proof}
The formulas follow from Theorem \ref{deformedbrane}. First, for $g \in \mathcal{G}_{C}$ we have 
\begin{align*}
s(T_{n}(g)) &= s(m(\mathcal{L}_{n}(t(g)), g)) = s(g), \\
 t(T_{n}(g)) &= t(m(\mathcal{L}_{n} (t(g)), g)) = (t\circ \mathcal{L}_{n}) (t(g)) = \varphi_{1}^n (t(g)). 
\end{align*}
For the formula involving the symplectic form, we use the fact that the graph of the action is a holomorphic Lagrangian submanifold of 
\[
(Z_{n}, -\Omega_{n}) \times (\mathcal{G}_{C}, -\Omega_{0}) \times (Z_{n}, \Omega_{n}).
\]
Consider the map from $\mathcal{G}_{C}$ into the graph given by 
\[
g \mapsto (\mathcal{L}_{n} (t(g)), g, T_{n}(g)).
\]
The symplectic form pulls back to $0$, and hence 
\[
T_{n}^{*}\Omega_{n} = \Omega_{0} + t^{*}\mathcal{L}_{n}^{*}\Omega_{n} = \Omega_{0} + t^{*}F_{n}.
\]
\end{proof}
As a result of this lemma, $T_{n}$ is a holomorphic symplectic isomorphism $Z_{n,0} \to Z_{n}$ which intertwines the source and target maps. Putting all the maps together, we obtain the map
\[
T : \bigsqcup_{i \in \mathbb{Z}} (Z_{i,0}, \Omega_{i,0}) \to (\mathcal{Z}, \Omega). 
\]
\begin{proposition}
The map $T$ is a holomorphic symplectic groupoid isomorphism. 
\end{proposition}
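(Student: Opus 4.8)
The plan is to extract everything from the preceding lemma, which already carries the geometric content. By that lemma (and the sentence following it), each $T_{n}$ is a holomorphic symplectic isomorphism $Z_{n,0}\to Z_{n}$ intertwining the source and target maps, since $s\circ T_{n}=s$, $t\circ T_{n}=\varphi_{1}^{n}\circ t$, and $T_{n}^{*}\Omega_{n}=\Omega_{0}+t^{*}F_{n}=\Omega_{n,0}$ (using $F_{0}=0$). Assembling these, $T$ is a holomorphic symplectomorphism $\sqcup_{i}(Z_{i,0},\Omega_{i,0})\to(\mathcal{Z},\Omega)$ which intertwines the respective source and target maps and covers $\mathrm{id}_{M}$ as a map of groupoids over $(M,\sigma_{C})$. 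It therefore remains only to show that $T$ preserves units and intertwines the two multiplications; the symplectic and holomorphic properties, and the bijectivity of each $T_{n}$, are already supplied by the lemma.

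For the units, I would use the bisection description $T_{n}=L_{\mathcal{L}_{n}}$, where $L_{\mathcal{L}_{n}}(g)=m(\mathcal{L}_{n}(t(g)),g)$ is left translation by $\mathcal{L}_{n}$. Since $\mathcal{L}_{0}$ is the identity bisection of $\mathcal{Z}$ by Theorem~\ref{deformedbrane}, we get $T_{0}=\mathrm{id}$, so the units of $\sqcup_{i}Z_{i,0}$, which live in degree $0$, are sent to the units of $\mathcal{Z}$. The relation $\mathcal{L}_{n}\ast\mathcal{L}_{-n}=\mathcal{L}_{0}$ of Theorem~\ref{deformedbrane} moreover exhibits $L_{\mathcal{L}_{-n}}$ as a two-sided inverse of $T_{n}$, re-confirming bijectivity.

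The multiplicativity is the substance of the proof, and I would present it by rigidity, with an explicit computation as a fallback. Both $\sqcup_{i}Z_{i,0}$ and $\mathcal{Z}$ are symplectic groupoids over the same Poisson manifold $(M,\sigma_{C})$ whose source fibres are (deformed) cotangent fibres, hence connected. For $f\in C^{\infty}(M)$ the Hamiltonian vector fields of $s^{*}f$ and $t^{*}f$ are intrinsic to $(\Omega,s,t)$, and their flows generate the right and left translations of such a groupoid. Because $T$ is a symplectomorphism with $s\circ T=s$ and with target maps matched under $\mathrm{id}_{M}$, it satisfies $T_{*}X_{s^{*}f}=X_{s^{*}f}$ and $T_{*}X_{t^{*}f}=X_{t^{*}f}$, so it intertwines these flows; together with $T_{0}=\mathrm{id}$ (preservation of units) this forces $T$ to intertwine the multiplications and hence to be a groupoid isomorphism. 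Alternatively, writing both products in their $\Phi$-twisted form $(g,h)\mapsto m(\Phi^{-m}(g),h)$, one expands $T_{n+m}\big(m(\Phi^{-m}(g),h)\big)$ and $m\big(\Phi^{-m}(T_{n}(g)),T_{m}(h)\big)$ and checks equality using associativity of $m$, the equivariance $\Phi(\mathcal{L}_{i,j})=\mathcal{L}_{i-1,j-1}$ from the appendix construction, and the law $\mathcal{L}_{n}\ast\mathcal{L}_{m}=\mathcal{L}_{n+m}$.

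The main obstacle is exactly this multiplicativity step, since the two products are genuinely presented in different guises — the magnetic product is spread over the cover $M_{\mathbb{Z}}$ and twisted by $\Phi$, whereas the reduction product of Theorem~\ref{deformedbrane} is written through the exponentials $e^{\pm\frac{1}{2}CJ}$ — so the real work is reconciling their index conventions; the identification $T_{n}=L_{\mathcal{L}_{n}}$ as translation by the multiplicative bisection family $\{\mathcal{L}_{n}\}$ is what makes either route manageable. For the rigidity route the delicate point is completeness of the generating Hamiltonian flows together with source-connectedness, both of which hold here because the groupoid is a deformation of a cotangent bundle. Finally, I would note that the same translation map upgrades to the prequantized setting: combining $\hat{\Phi}$ from the appendix with the isomorphisms $\mathcal{L}_{n}^{*}\phi_{n}$ of Theorem~\ref{deformedbrane} shows that $T$ lifts to an isomorphism of multiplicative prequantizations carrying $\theta_{C}$ to $\Theta_{C}$, yielding the statement in its full prequantized form.
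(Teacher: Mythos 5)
Your reduction of the problem to multiplicativity (the symplectomorphism property, holomorphy, bijectivity, and the behaviour of units all coming from the preceding lemma together with $T_{0}=\mathrm{id}$) matches the paper's starting point, which says exactly ``it suffices to check that $T$ is multiplicative''. The gap is in how you check it. Your primary route, rigidity via Hamiltonian flows, fails because neither groupoid is source-connected: each $Z_{i,0}$ (respectively $Z_{i}$) is a connected component of $\bigsqcup_{i\in\mathbb{Z}}Z_{i,0}$ (respectively of $\mathcal{Z}$), so the source fibre over $x\in M$ has one piece in every degree $i$, and the flows of $X_{s^{*}f}$ and $X_{t^{*}f}$ cannot leave a component. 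Starting from the units, which sit in degree $0$, these flows generate only the degree-zero part; hence ``flows intertwined plus $T_{0}=\mathrm{id}$'' yields multiplicativity essentially only there, and says nothing about the components with $i\neq 0$ --- which is precisely where the $\Phi$-twisting of $\tilde{m}$ lives, i.e.\ where the entire content of the proposition sits. Your assertion that source-connectedness holds ``because the groupoid is a deformation of a cotangent bundle'' is true of each component but false for the graded groupoid. (The idea can be repaired: flow each factor to the reference bisections $\mathcal{L}_{i,0}$ rather than to the units, note that $T_{i}\circ\mathcal{L}_{i,0}=\mathcal{L}_{i}$, and close the remaining bisection-pair case using $\mathcal{L}_{i,0}\ast\mathcal{L}_{j,0}=\mathcal{L}_{i+j,0}$ and $\mathcal{L}_{i}\ast\mathcal{L}_{j}=\mathcal{L}_{i+j}$; but none of this appears in your write-up.)

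Your fallback computation is the paper's actual route, but your ingredient list is missing the one lemma that makes it work. Expanding $T_{i+j}(\tilde{m}(g,h))$ against $m(T_{i}(g),T_{j}(h))$ requires moving a left translation by the deformed bisection $\mathcal{L}_{n}\subset Z_{n}$ past a groupoid element, and for this the paper first establishes the identity
\[
m(\mathcal{L}_{n}(t(g)),g)=m(\Phi^{n}(g),\mathcal{L}_{n}(s(g))),
\]
by observing that the map $\Psi_{n}$ defined by this equation is a symplectic groupoid automorphism of $(\mathcal{G}_{C},\Im(\Omega_{0}))$ covering $\varphi_{1}^{n}$, and hence equals $\Phi^{n}$ by uniqueness of such integrations. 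Your cited facts --- associativity, $\Phi(\mathcal{L}_{i,j})=\mathcal{L}_{i-1,j-1}$, and $\mathcal{L}_{n}\ast\mathcal{L}_{m}=\mathcal{L}_{n+m}$ --- do not supply this: $\Phi(\mathcal{L}_{i,j})=\mathcal{L}_{i-1,j-1}$ is a statement about the bisections of the magnetic groupoid (which are just copies of the identity bisection of $\mathcal{G}_{C}$), not about the deformed bisections $\mathcal{L}_{n}$ of Theorem \ref{deformedbrane}, and nowhere in your argument is any relation between $\Phi$ and the latter established. That relation is the crux of the proof, so as written neither of your two routes closes.
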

\begin{proof}
It suffices to check that $T$ is multiplicative. Define a map $\Psi_{n} : \mathcal{G}_{C} \to \mathcal{G}_{C}$ by the equation 
\[
m(\mathcal{L}_{n}(t(g)), g) = m(\Psi_{n}(g), \mathcal{L}_{n}(s(g))). 
\]
Since $\mathcal{L}_{n}$ is a bisection, this map is a groupoid automorphism, since $\mathcal{L}_{n}$ is Lagrangian with respect to $\Im(\Omega_{n})$, $\Psi_{n}$ is a symplectomorphism of $(\mathcal{G}_{C}, \Im(\Omega_{0}))$, and finally, because $t \circ \mathcal{L}_{n} = \varphi_{1}^{n}$, the automorphism $\Psi_{n}$ covers $\varphi_{1}^n$. But this implies that we must have $\Psi_{n} = \Phi^{n}$, where $\Phi$ is the automorphism considered in Section \ref{appmagdef}. Stated otherwise, the bisection $\mathcal{L}_{n}$ satisfies 
\[
m(\mathcal{L}_{n}(t(g)), g) = m(\Phi^n(g), \mathcal{L}_{n}(s(g))). 
\]
Now let $g \in Z_{i,0}$ and $h \in Z_{j,0}$ satisfy $s(g) = \varphi_{1}^{j}\circ t(h)$. Using the above equation, along with the multiplicativity of the $\mathcal{L}_{n}$, we obtain (suppressing the multiplication from the notation in the calculation)
\begin{align*}
m(T_{i}(g), T_{j}(h)) &= \big(\mathcal{L}_{i} (t(g)) \ g\big) \big(\mathcal{L}_{j}(t(h)) \ h\big) \\
&= \mathcal{L}_{i} (t(g)) \ \big(\mathcal{L}_{j}(t (\Phi^{-j}(g))) \ \Phi^{-j}(g) \big) \  h \\
&= \mathcal{L}_{i + j} (t (\Phi^{-j}(g))) \ \big(\Phi^{-j}(g) h \big) \\ 
&= T_{i+j}(\tilde{m}(g,h)). 
\end{align*}
\end{proof}

We now lift $T$ to an isomorphism $\hat{T}$ between the multiplicative prequantizations. First, to construct the lift $\hat{T}_{n}$ of $T_{n}$ consider the following map which factors through the graph of the multiplication
\[
\mathcal{G}_{C} \to Z_{n} \times_{M} \mathcal{G}_{C} \to Z_{n} \times \mathcal{G}_{C} \times Z_{n}, \qquad g \mapsto (\mathcal{L}_{n} \circ t(g), g, T_{n}(g)).
\]
Pulling back the bundle $(U_{n}, D_{n})^{*} \boxtimes (U_{0}, D_{0})^{*} \boxtimes (U_{n}, D_{n})$ we obtain the following bundle over $\mathcal{G}_{C}$:
\[
t^{*}\mathcal{L}_{n}^{*}(U_{n}, D_{n})^{*} \otimes (U_{0}, D_{0})^{*} \otimes T_{n}^{*}(U_{n}, D_{n}). 
\]
The multiplicative cocycle $\Theta_{C}$, which is defined over $Z_{n} \times_{M} \mathcal{G}_{C}$, may be pulled back to $\mathcal{G}_{C}$, yielding a flat isomorphism
\[
\Theta_{C,n}: t^{*}\mathcal{L}_{n}^{*}(U_{n}, D_{n}) \otimes (U_{0}, D_{0}) \to T_{n}^{*}(U_{n}, D_{n}). 
\]
Finally, combining this with the isomorphism constructed in Theorem \ref{deformedbrane}, we arrive at our definition of $\hat{T}_{n}$:
\begin{equation}\label{thatfactor}
\hat{T}_{n} = \Theta_{C,n} \circ \big(t^{*}( \mathcal{L}_{n}^{*}(\phi_{n})^{-1} \circ e^{-R_{n}}) \otimes id \big) : t^{*}(L^{n}, \overline{\nabla^{\otimes n}}) \otimes (U_{0}, D_{0}) \to T_{n}^{*}(U_{n}, D_{n}). 
\end{equation}
Putting all the maps together, we get 
\[
\hat{T} : \bigsqcup_{i \in \mathbb{Z}} (U_{i,0}, D_{i,0}) \to (\mathcal{U}, D). 
\]
In the following, we will find it useful to decompose $\Theta_{C}$ as the a product of two contributions 
\[
\Theta_{C} = e^{i \pi C(J(x), J(y))} E. 
\]
\begin{lemma}
The multiplication $E$ defines a (non-flat) multiplicative cocycle, which satisfies the following equation involving the maps $\phi_{n}$ of Theorem \ref{deformedbrane}
\begin{equation} \label{productequivlemma}
e^{-CJ_{j}(y)}\phi_{i}|_{x} \otimes \phi_{j}|_{y} = \phi_{i+j}|_{m(x,y)} E(x,y),
\end{equation} 
where $x \in Z_{i}$ and $y \in Z_{j}$. 
\end{lemma}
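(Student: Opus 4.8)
The plan is to split $\Theta_{C}$ into a scalar prefactor and the residual operator $E$, deduce the cocycle property of $E$ from that of $\Theta_{C}$, and then verify the transport identity (\ref{productequivlemma}) by unwinding the defining isomorphisms.

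Set $\theta_{C}(x,y) = e^{i\pi C(J(x),J(y))}$, so that by Equation (\ref{BigRmatmultcocycle}) we have $\Theta_{C} = \theta_{C}\cdot E$. Theorem \ref{deformedbrane} gives that $\Theta_{C}$ is a flat multiplicative structure, hence satisfies the groupoid cocycle condition $\delta\Theta_{C} = 1$. Since $\theta_{C}$ is $\mathbb{C}^{\times}$-valued, it is enough to check that $\theta_{C}$ is itself a $2$-cocycle, for then $\delta E = \delta\Theta_{C}\cdot(\delta\theta_{C})^{-1} = 1$. First I would verify the associativity identity
\[
\theta_{C}(y,z)\,\theta_{C}(x,m(y,z)) = \theta_{C}(m(x,y),z)\,\theta_{C}(x,y)
\]
on composable triples; this is immediate from the fact (Theorem \ref{deformedbrane}) that $J$ is a groupoid homomorphism to the additive group $\mathfrak{t}^{*}_{\mathbb{C}}$, together with the bilinearity of $C$, since both sides carry the exponent $i\pi\big(C(J(x),J(y)) + C(J(x),J(z)) + C(J(y),J(z))\big)$. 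Non-flatness is then automatic: $\Theta_{C}$ is flat while $d\log\theta_{C} = i\pi\big(C(dJ,J) + C(J,dJ)\big)$ does not vanish, so the connection induced by $D$ on $\delta\mathcal{U}$ does not annihilate $E = \theta_{C}^{-1}\Theta_{C}$.

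For the transport identity I would first fix notation by writing $\phi_{n} = \exp(-\tfrac{1}{2}CJ)\circ\psi_{n}$, where $\psi_{n}\colon U_{n} \stackrel{\cong}{\longrightarrow}\pi^{*}L^{\otimes n}$ is the isomorphism of Proposition \ref{reductionprop} (there denoted $\phi_{n}$), which is $\mathbb{T}_{\mathbb{C}}$-equivariant by Proposition \ref{liftedequivariantaction}, and $\exp(-\tfrac{1}{2}CJ)$ is the equivariant transport $\pi^{*}L^{\otimes n}\to s^{*}L^{\otimes n}$. Next I would record the geometry of a composable pair $x\in Z_{i}$, $y\in Z_{j}$: with $p = e^{-\frac{1}{2}CJ(y)}\pi(x)$, the points $x' = e^{-\frac{1}{2}CJ(y)}x$ and $y' = e^{\frac{1}{2}CJ(x)}y$ both lie over $p$, one has $m(x,y) = x'+y'$, and the homomorphism property $J(m(x,y)) = J(x) + J(y)$ yields $s(m(x,y)) = s(y) = e^{-CJ(y)}s(x)$. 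This last identity is exactly what makes the factor $e^{-CJ(y)}$ on the left of (\ref{productequivlemma}) transport $L^{\otimes i}|_{s(x)}$ to $L^{\otimes i}|_{s(y)}$, so that both sides are maps $U_{i}|_{x}\otimes U_{j}|_{y} \to L^{\otimes(i+j)}|_{s(y)}$.

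The computation then applies the canonical-trivialization description of $\Theta$ from Proposition \ref{multstruct1}, namely $\psi_{i+j}|_{x'+y'}(\Theta(a',b')) = \psi_{i}|_{x'}(a')\otimes\psi_{j}|_{y'}(b')$, to $a' = e^{-\frac{1}{2}CJ(y)}a$ and $b' = e^{\frac{1}{2}CJ(x)}b$. Pulling the torus actions out through the equivariance of $\psi$ and then composing with $\phi_{i+j}|_{m(x,y)} = e^{-\frac{1}{2}C(J(x)+J(y))}\circ\psi_{i+j}|_{m(x,y)}$ gives
\[
\phi_{i+j}|_{m(x,y)}\big(E(x,y)(a\otimes b)\big) = \big(e^{-\frac{1}{2}CJ(x) - CJ(y)}\psi_{i}|_{x}(a)\big)\otimes\big(e^{-\frac{1}{2}CJ(y)}\psi_{j}|_{y}(b)\big),
\]
which coincides with $\big(e^{-CJ(y)}\phi_{i}|_{x}(a)\big)\otimes\phi_{j}|_{y}(b)$ upon substituting $\phi_{i}|_{x} = e^{-\frac{1}{2}CJ(x)}\psi_{i}|_{x}$ and $\phi_{j}|_{y} = e^{-\frac{1}{2}CJ(y)}\psi_{j}|_{y}$. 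This is precisely (\ref{productequivlemma}). The main obstacle is purely the bookkeeping: keeping track of which fibre of $L^{\otimes n}$ each element occupies and confirming that the successive $\mathbb{T}_{\mathbb{C}}$-transports compose to the stated exponents. The conceptual ingredients --- equivariance of $\psi$, the homomorphism property of $J$, and the canonicity of $\Theta$ --- are elementary, and the only genuine risk is a sign or normalization slip in the exponents of the torus actions.
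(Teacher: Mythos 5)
Your proposal is correct, and it takes a genuinely different route from the paper's own proof. The paper argues by going back upstairs to $T^{*}P_{L}$: there $E$ is the reduction of the \emph{trivial} multiplicative structure and the $\phi_{n}$ are the reductions of the morphisms induced by the tautological sections $s_{n}$, so Equation \ref{productequivlemma} becomes essentially the tautology $\mu^{\otimes i}\otimes\mu^{\otimes j}=\mu^{\otimes (i+j)}$ dressed with equivariant transports, and it descends through the quotient. You never leave the reduced space: you assemble the identity from three facts already proved downstairs --- the $\mathbb{T}_{\mathbb{C}}$-equivariance of $\psi_{n}\colon U_{n}\cong\pi^{*}(L^{\otimes n})$ (Proposition \ref{liftedequivariantaction}), the description of $\Theta$ as the canonical trivialization (Proposition \ref{multstruct1}), and the additivity of $J$ under the deformed product (Theorem \ref{deformedbrane}). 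Your bookkeeping is right: composability places $x'=e^{-\frac{1}{2}CJ(y)}x$ and $y'=e^{\frac{1}{2}CJ(x)}y$ in a common fibre, $s(m(x,y))=s(y)=e^{-CJ(y)}s(x)$ so both sides are maps into $L^{\otimes(i+j)}|_{s(y)}$, and the composed transports yield exactly the exponents $-\tfrac{1}{2}CJ(x)-CJ(y)$ and $-\tfrac{1}{2}CJ(y)$ on the two tensor factors. Your argument for multiplicativity of $E$ --- writing $E=\theta_{C}^{-1}\Theta_{C}$ and checking that the scalar $\theta_{C}=e^{i\pi C(J,J)}$ is itself a groupoid $2$-cocycle, by bilinearity of $C$ and $J\circ m=J+J$, so that $\delta E=\delta\Theta_{C}\cdot(\delta\theta_{C})^{-1}=1$ --- is also a useful explicit supplement, since the paper leaves this implicit in the phrase ``obtained by reducing the trivial multiplicative structure''. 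The trade-off: the paper's route is shorter because upstairs $E$ is literally the constant section $1$ and the $\phi_{n}$ are tautological, while yours is self-contained at the level of the reduced objects and makes every exponent explicit, at the cost of invoking the earlier equivariance and trivialization statements as black boxes (legitimately, since they precede this lemma).
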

\begin{proof}
The multiplicative structure $E$ is obtained by reducing the `trivial' multiplicative structure on $T^{*}P_{L}$, and $\phi_{n}$ are obtained by reducing the morphisms on $T^{*}P_{L}$ obtained from the canonical sections $s_{n}$ (see the proofs of Theorems \ref{reductionprop} and \ref{deformedbrane}). Working with these structures on $T^{*}P_{L}$ the result follows. 
\end{proof}

\begin{proposition}
The map $\hat{T}$ is an isomorphism of multiplicative line bundles with connection. 
\end{proposition}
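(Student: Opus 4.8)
The plan is to verify the two defining properties of an isomorphism of multiplicative line bundles with connection in turn: first that each component $\hat T_n$ is a flat isomorphism of line bundles with connection, and then that the family $\{\hat T_n\}$ intertwines the two multiplicative cocycles.

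Flatness is built into the factorization~\eqref{thatfactor}. Each of the three maps composing $\hat T_n$---the pulled-back multiplicative cocycle $\Theta_{C,n}$, the reduction isomorphism $\mathcal{L}_n^*(\phi_n)^{-1}$, and the gauge transformation $e^{-R_n}$---is a flat isomorphism of line bundles with connection: the first because $\Theta_C$ is flat, and the latter two by Theorem~\ref{deformedbrane}. Since a composition of flat isomorphisms is again flat, $\hat T_n$ is a flat isomorphism covering the holomorphic symplectomorphism $T_n$.

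The substance of the statement is multiplicativity. Using that $T$ is already a groupoid isomorphism, satisfying $m(T_i(g), T_j(h)) = T_{i+j}(\tilde m(g,h))$, I would fix a composable pair $g \in Z_{i,0}$, $h \in Z_{j,0}$ and expand both sides of the identity relating the source multiplicative structure---built from $\theta_C$ and the lift $\hat\Phi$ of Section~\ref{appmagdef}---to the target cocycle $\Theta_C$. The essential simplification is the decomposition $\Theta_C = e^{i\pi C(J(x),J(y))}E$. Since $T_n$ intertwines the source and target maps and is holomorphic and $\mathbb{T}$-equivariant (the bisection $\mathcal{L}_n$ being $\mathbb{T}$-invariant), it intertwines the holomorphic moment maps, $J \circ T_n = J_0$; consequently the scalar factor $e^{i\pi C(J,J)}$ on the target matches the factor $\theta_C = e^{i\pi C(J_0, J_0)}$ on the source automatically. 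This reduces the problem to reconciling the non-flat cocycle $E$ with the reduction isomorphisms $\phi_n$, the gauge factors $R_n$, and $\hat\Phi$. For the $\phi$-contribution this is precisely Lemma~\eqref{productequivlemma}, which allows me to commute the $\phi_n$ past the groupoid product at the cost of the factor $E$.

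The main obstacle will be the bookkeeping of the gauge factors $e^{R_n}$, with $R_n = i\pi \int_0^n \int_0^t C(\varphi_t^*\mu, \varphi_s^*\mu)\, ds\, dt$. This is the prequantization-level correction with no analogue in the manifold-level multiplicativity of $T$, and the heart of the computation is an additivity relation expressing $R_{i+j}$, evaluated along the convolution, in terms of $R_i$, $R_j$, and the value of $E$; here one uses that $\hat\Phi$ is implemented by the time-$1$ flow $\hat\varphi_1$ on $L$ and hence shifts the nested integral defining $R_n$ by exactly one step. Once this is established, substituting~\eqref{thatfactor} into both sides of the multiplicativity equation and applying Lemma~\eqref{productequivlemma} yields the desired equality of cocycles. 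As for the corresponding statement about $T$, since all maps involved are flat it suffices to check the identity along the image of $\mathcal{L}_i \times_M \mathcal{L}_j$ in the composable arrows, where the connections are trivialized and the verification reduces to an identity of scalars.
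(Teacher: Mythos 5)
Your overall strategy coincides with the paper's: reduce multiplicativity to a check along $\mathcal{L}_{i,0}\times_{M}\mathcal{L}_{j,0}$ using flatness, exploit the factorization \eqref{thatfactor}, split $\Theta_{C} = e^{i\pi C(J,J)}E$, and close the computation with Equation \ref{productequivlemma} together with an additivity property of the gauge exponents $R_{n}$. However, there is a genuine error at your pivotal simplification. The identity $J\circ T_{n} = J_{0}$ is false: since $J$ is a groupoid homomorphism to the additive group $\mathfrak{t}^{*}_{\mathbb{C}}$ and $T_{n}(g) = m(\mathcal{L}_{n}(t(g)),g)$, one has $J\circ T_{n} = t^{*}(\mathcal{L}_{n}^{*}J_{n}) + J_{0}$, and by Theorem \ref{deformedbrane} the term $\mathcal{L}_{n}^{*}J_{n} = -i\int_{0}^{n}\varphi_{t}^{*}(\mu)\,dt$ is a nonzero, nonconstant function for $n\neq 0$. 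So the scalar prefactors do not match automatically. What is actually true is that on the source side the factor $\theta_{C}$ is trivial along the bisections (because $J_{0}$ vanishes on the identity bisection, i.e.\ the zero section), whereas on the target side the factor $e^{i\pi C(J_{i}\mathcal{L}_{i}\varphi_{j}(x),\,J_{j}\mathcal{L}_{j}(x))}$ is nontrivial; this mismatch is precisely what the gauge factors $e^{R_{n}}$ must absorb, and discarding it makes the computation fail to close.

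Relatedly, your proposed additivity relation, expressing $R_{i+j}$ in terms of $R_{i}$, $R_{j}$, and ``the value of $E$,'' cannot be correct as stated: $E$ is a transport/tensor map, not a scalar, and the relation that actually makes the paper's diagram commute is
\[
R_{i+j}(x) = R_{i}(\varphi_{j}(x)) + R_{j}(x) - i\pi C\big(J_{i}\mathcal{L}_{i}\varphi_{j}(x),\, J_{j}\mathcal{L}_{j}(x)\big),
\]
whose cross term is exactly the scalar prefactor you claimed cancels; it arises from the cross range $[j,i+j]\times[0,j]$ when splitting the double integral defining $R_{i+j}$. Your proposal also leaves out two steps the paper needs to carry out separately: first, the multiplicativity of the $\Theta_{C,n}$ factor of $\hat{T}_{n}$ itself, which follows from the cocycle identity $\delta\Theta_{C}=1$ together with the triviality of $U_{0}$ and $\mathcal{L}_{0}^{*}J_{0}=0$ (this is what collapses the product on $U_{n}\otimes U_{0}$ pairs to a single application of $\Theta_{C}$); and second, the identification of the source-side product $\hat{\varphi}_{-j}\otimes\mathrm{id}$ with the equivariant action of $\exp(-CJ_{j}\mathcal{L}_{j}(x))$, via the integral equation for the flow of $\hat{W}$, which is where the factor $E$ from Equation \ref{productequivlemma} is actually consumed.
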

\begin{proof}
It suffices to check that $\hat{T}$ is multiplicative, and this may be checked along the submanifolds $\mathcal{L}_{i,0} \times_{M} \mathcal{L}_{j,0}$. Since $\hat{T}$ factors as the composition of two maps~\eqref{thatfactor}, and so we check the multiplicativity of each separately. 

The multiplicativity of the term involving $\Theta_{C,n}$ follows from the associativity of the product (i.e. $\delta \Theta_{C} = 1$). More precisely, the induced product on $t^{*}\mathcal{L}_{n}^{*}(U_{n}, D_{n}) \otimes (U_{0}, D_{0})$ is given by two applications of $\Theta_{C}$. Choosing a point $x \in M$, so that $(\mathcal{L}_{i,0}(\varphi_{j}(x)), \mathcal{L}_{j,0}(x)) \in \mathcal{L}_{i,0} \times_{M} \mathcal{L}_{j,0}$, we may write this product as follows:
\[
\Theta_{C}\otimes \Theta_C : (U_{i}|_{\mathcal{L}_{i}\varphi_{j}(x)} \otimes U_{0}|_{\mathcal{L}_{0}\varphi_{j}(x)}) \otimes  (U_{j}|_{\mathcal{L}_{j}(x)} \otimes U_{0}|_{\mathcal{L}_{0}(x)}) \to (U_{i+j}|_{\mathcal{L}_{i+j}(x)} \otimes U_{0}|_{\mathcal{L}_{0}(x)}).
\]
Now note that $U_{0}$ is the trivial bundle with the trivial equivariant structure. Hence, for $a \in U_{i}|_{\mathcal{L}_{i}\varphi_{j}(x)}$ and $b \in  U_{0}|_{\mathcal{L}_{0}\varphi_{j}(x)}$ we get
\begin{align*}
\Theta_{C}(a \otimes b) &= e^{i \pi C(J_{i}\mathcal{L}_{i}\varphi_{j}(x), J_{0}\mathcal{L}_{0}\varphi_{j}(x))}  \Theta(e^{- \frac{1}{2}CJ_{0}\mathcal{L}_{0}\varphi_{j}(x)}  a, e^{\frac{1}{2}CJ_{i}\mathcal{L}_{i}\varphi_{j}(x)}  b) \\
&= \Theta(a, e^{\frac{1}{2}CJ_{i}\mathcal{L}_{i}\varphi_{j}(x)}  b) \\
&= \Theta(a, b) \\
&= ab.
\end{align*}
The second line follows because $\mathcal{L}_{0}^{*}J_{0} = 0$, the third line follows because the equivariant structure on $U_{0}$ is trivial, and the final line follows because $\Theta$ is a tensor product, which reduces to scalar multiplication when one of the terms if trivial. As a result of this, the product reduces to 
\[
\Theta_{C} : U_{i}|_{\mathcal{L}_{i}\varphi_{j}(x)} \otimes U_{j}|_{\mathcal{L}_{j}(x)} \to U_{i+j}|_{\mathcal{L}_{i+j}(x)}.
\]

The product on $U_{n,0}$, restricted to the point $(\mathcal{L}_{i,0}(\varphi_{j}(x)), \mathcal{L}_{j,0}(x))$, is given by 
\[
\hat{\varphi}_{-j} \otimes id : L^{\otimes i}|_{\varphi_{j}(x)} \otimes L^{j}|_{x} \to L^{\otimes (i+j)}|_{x}. 
\]
Therefore, we need to check that the following diagram commutes. 
\[
\begin{tikzpicture}[scale=2.5]
\node (A) at (0,0) {$L^{\otimes i}|_{\varphi_{j}(x)} \otimes L^{j}|_{x}$};
\node (B) at (2,0) {$L^{\otimes (i+j)}|_{x}$};
\node (C) at (0,1) {$U_{i}|_{\mathcal{L}_{i}\varphi_{j}(x)} \otimes U_{j}|_{\mathcal{L}_{j}(x)}$};
\node (D) at (2,1) {$U_{i+j}|_{\mathcal{L}_{i+j}(x)}$};
\path[->,>=angle 90]
(C) edge node[left]{$(e^{R_{i}} \phi_{i}) \otimes (e^{R_{j}} \phi_{j})$} (A)
(A) edge node[above]{$\hat{\varphi}_{-j} \otimes id$} (B)
(C) edge node[above]{$e^{i \pi C(J,J)} E$} (D)
(D) edge node[right]{$e^{R_{i+j}}\phi_{i+j}$} (B);
\end{tikzpicture}
\]
We can decompose the commutativity into two parts. First, in order to match the exponential pre-factors we must show that
\[
R_{i+j}(x) = R_{i}(\varphi_{j}(x)) + R_{j}(x) - i \pi C(J_{i}\mathcal{L}_{i}\varphi_{j}(x), J_{j}\mathcal{L}_{j}(x)). 
\]
Using the expressions for these functions from Theorem \ref{deformedbrane}, the right hand side is given by 
\begin{align*}
i\pi \big( \int_{0}^{i}\int_{0}^{t} C(\varphi_{t+j}^{*}\mu, \varphi_{s+j}^{*}\mu) ds dt + \int_{0}^{j}\int_{0}^{t}C(\varphi_{t}^{*}\mu, \varphi_{s}^{*}\mu)  ds dt  - i^2 C(  \int_{0}^{i} \varphi_{t+j}^{*}\mu dt, \int_{0}^{j} \varphi_{s}^{*}\mu) ds \big),
\end{align*}
which is equal to $R_{i+j}$. 

Second, applying Equation \ref{productequivlemma}, we obtain the equality 
\[
e^{-CJ_{j}\mathcal{L}_{j}(x)}\phi_{i}|_{\mathcal{L}_{i}\varphi_{j}(x)} \otimes \phi_{j}|_{\mathcal{L}_{j}(x)} = \phi_{i+j}|_{\mathcal{L}_{i+j}(x)} E(\mathcal{L}_{i}\varphi_{j}(x),\mathcal{L}_{j}(x)).
\]
It therefore remains to show that the flow $\hat{\varphi}_{-j}$ is obtained by acting by the element 
\[
\exp(-CJ_{j}\mathcal{L}_{j}(x)) = \exp(iC \int_{0}^{j} \mu\varphi_{s}(x) ds).
\]
This follows from the integral equation defining the flow of $\hat{W}$. 
\end{proof}

 \bibliographystyle{plain}
\bibliography{references}
\end{document}